\documentclass[a4paper,fleqn,longmktitle]{cas-sc}

\usepackage[numbers]{natbib}

\usepackage{graphicx}%
\usepackage{multirow}%
\usepackage{amsmath,amssymb,amsfonts}%
\usepackage{amsthm}%
\usepackage{mathrsfs}%
\usepackage[title]{appendix}%
\usepackage{xcolor}%
\usepackage{textcomp}%
\usepackage{manyfoot}%
\usepackage{cleveref}
\usepackage{booktabs}%
\usepackage{algorithm}%
\usepackage{algorithmicx}%
\usepackage{algpseudocode}%
\usepackage{listings}%
\usepackage{enumitem}
\usepackage{float}

\theoremstyle{plain}
\newtheorem{theorem}{Theorem}[section]
\newtheorem{lemma}[theorem]{Lemma}
\newtheorem{proposition}[theorem]{Proposition}
\newtheorem{corollary}[theorem]{Corollary}

\theoremstyle{definition}
\newtheorem{assumption}[theorem]{Assumption}
\newtheorem{hypothesis}[theorem]{Hypothesis}

\theoremstyle{remark}
\newtheorem{remark}[theorem]{Remark}

\DeclareMathOperator{\Res}{Res}
\DeclareMathOperator{\Ran}{Ran}
\DeclareMathOperator{\ind}{ind}
\DeclareMathOperator{\spn}{span}
\DeclareMathOperator{\Ai}{Ai}
\DeclareMathOperator{\Bi}{Bi}
\DeclareMathOperator{\Gi}{Gi}
\DeclareMathOperator{\fp}{f.p.}

\newcommand{\Lout}{\mathcal{L}_{\mathrm{out}}}
\newcommand{\Tsh}{\mathcal{T}_{\mathrm{sh}}}
\newcommand{\Gcal}{\mathcal{G}}
\newcommand{\Aout}{\mathcal{A}_{\mathrm{out}}}
\newcommand{\Fout}{\mathcal{F}_{\mathrm{out}}}
\newcommand{\Bcal}{\mathcal{B}}
\newcommand{\Dcal}{\mathcal{D}}
\newcommand{\Ncal}{\mathcal{N}}
\newcommand{\Mcal}{\mathcal{M}}
\newcommand{\Kcal}{\mathcal{K}}
\newcommand{\Hcal}{\mathcal{H}}
\newcommand{\Xcal}{\mathcal{X}}

\newcommand{\Eedge}{\mathcal{E}_{\mathrm{edge}}}
\newcommand{\Redge}{\mathcal{R}_{\mathrm{edge}}}
\newcommand{\LTD}{\mathcal{L}_{\mathrm{TD}}}
\newcommand{\Min}{\mathcal{M}_{\mathrm{in}}}
\newcommand{\Arec}{A_{\mathrm{rec}}}
\newcommand{\Dmatch}{\mathcal{D}_{\mathrm{match}}}
\newcommand{\Bedge}{\mathscr{B}_{\mathrm{edge}}}
\newcommand{\Finc}{\mathbf{F}_{\mathrm{inc}}}
\newcommand{\FKH}{\mathbf{F}_{KH}}
\newcommand{\Fsing}{\mathbf{F}_{\mathrm{sing}}}
\newcommand{\Freg}{\mathbf{F}_{\mathrm{reg}}}
\newcommand{\Vm}{\mathbf{V}_{-}}
\newcommand{\Vz}{\mathbf{V}_{0}}
\newcommand{\Tssing}{\operatorname{Tr}_{\mathrm{sing}}}

\numberwithin{equation}{section}

\def\tsc#1{\csdef{#1}{\textsc{\lowercase{#1}}\xspace}}
\tsc{WGM}
\tsc{QE}

\ExplSyntaxOn
\cs_if_exist:NF \vbox_unpack_clear:N
  {
    \cs_new_protected:Npn \vbox_unpack_clear:N #1
      {
        \vbox_unpack:N #1
        \box_clear:N #1
      }
  }
\ExplSyntaxOff

\makeatletter

\def\@journal{Elsevier}
\makeatother

\usepackage{fancyhdr}
\AtBeginDocument{
    \fancyfoot[L]{J. Yu and L. S. Wang: Preprint submitted to Journal of Mathematical Analysis and Applications}
}
\begin{document}
\let\WriteBookmarks\relax
\def\floatpagepagefraction{0.6}
\def\textpagefraction{.1}

\shorttitle{Fredholm--residue selection of the unsteady Kutta amplitude}  

\shortauthors{J. Yu and L. S. Wang}  

\title [mode = title]{Fredholm--residue selection of the unsteady Kutta amplitude}  

\author[1]{Jiguang Yu}
\fnmark[1] 
\ead{jyu678@bu.edu}

\affiliation[1]{organization={College of Engineering, Boston University},
            city={Boston},
            postcode={02215},
            state={MA},
            country={United States}}

\author[2]{Louis Shuo Wang}
\cormark[1]  
\fnmark[1]  
\ead{wang.s41@northeastern.edu}

\affiliation[2]{organization={Department of Mathematics, Northeastern University},
            city={Boston},
            postcode={02115},
            state={MA},
            country={United States}}

\cortext[1]{Corresponding author}

\fntext[1]{These authors contributed equally to this work as co-first authors.}

\begin{abstract}
We give an operator-theoretic interpretation of unsteady Kutta selection in
trailing-edge acoustic receptivity.  The inviscid acoustic--wake problem leaves
one outgoing wake amplitude undetermined.  We show that, under explicit
structural hypotheses, this amplitude is the same scalar obtained from three
representations: cancellation of the inverse-square-root edge singularity,
Fredholm compatibility of the viscous lower-deck problem, and the residue of
the Kutta-normalized transform solution at the downstream wake pole:
$\displaystyle A = -\frac{C_-^{(0)}}{C_-^{(KH)}} = -\frac{\langle \mathbf F_{\rm inc},\Psi^\ast\rangle}{\langle \mathbf F_{KH},\Psi^\ast\rangle} = i\operatorname*{Res}_{\alpha=\alpha_{KH}}\mathcal M(\alpha)$.
The inner Fredholm--edge mechanism is verified exactly in a linear-shear
lower-deck model, where the primal shear and adjoint velocity are Airy fields
and the edge concomitant is nonzero outside a discrete resonance set.
\end{abstract}

\begin{keywords}
unsteady Kutta selection \sep 
trailing-edge receptivity \sep
viscous--inviscid matching \sep
triple-deck theory \sep
Fredholm compatibility \sep
Wiener--Hopf method \sep
wake-pole residue.

\MSC[2020] 76G25, 76D10, 35Q35, 47A53, 35C15, 35C20
\end{keywords}

\maketitle

\section{Introduction and main identity}
\label{sec:intro-main}

\begin{figure}
    \centering
    \includegraphics[width=\linewidth]{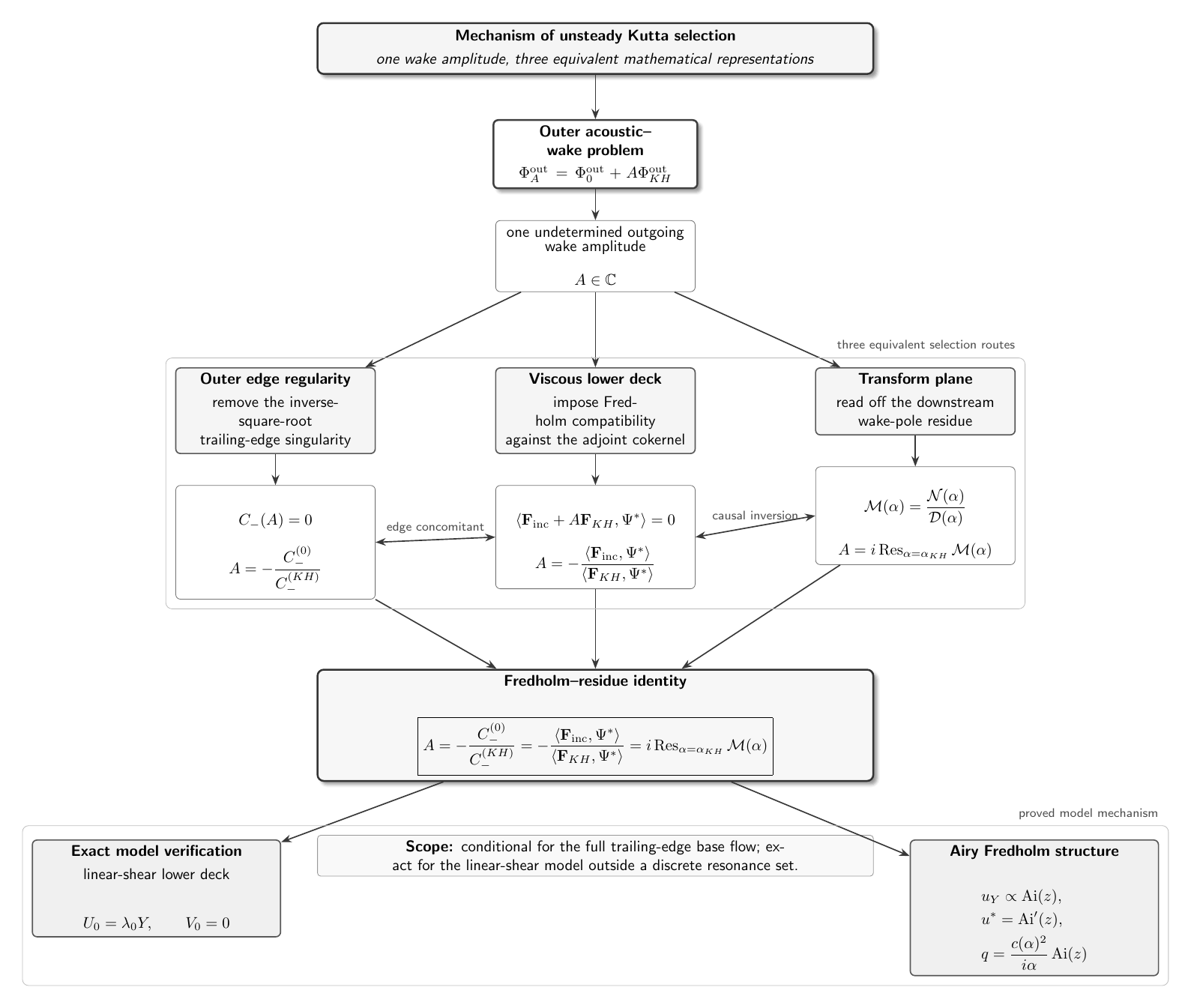}
    \caption{Mechanism of unsteady Kutta selection.  The inviscid acoustic--wake
 problem leaves one outgoing wake amplitude \(A\) undetermined.  The paper
 identifies three equivalent ways to select it: removal of the outer
 inverse-square-root edge singularity, Fredholm compatibility of the viscous
 lower-deck problem, and the residue of the Kutta-normalized transform solution
 at the downstream wake pole.  The inner Fredholm--edge mechanism is verified
 exactly in the linear-shear lower-deck model, where the primal and adjoint
 fields are Airy fields.}
 \label{fig:mechanism-flowchart}
\end{figure}

The unsteady Kutta condition is a singular selection principle at a sharp
edge.  In steady inviscid airfoil theory it fixes the circulation by removing
the inverse-square-root velocity singularity at the trailing edge.  In unsteady
acoustic receptivity the situation is subtler: the outer acoustic--wake problem
may admit an outgoing hydrodynamic wake mode, and the inviscid equations alone
then leave one complex amplitude undetermined.  This paper formulates that
missing scalar as a Fredholm compatibility condition for the viscous
lower-deck problem and identifies the same scalar with the pole residue of a
Kutta-normalized transform solution.  The structural hypotheses under which
the identification holds are isolated and then verified in closed form for the
canonical linear-shear model of the unsteady lower deck, for which the adjoint
state is an Airy-derivative field and the edge concomitant is computable.

That a Kutta condition in unsteady flow need not coincide with its steady
form, and that its applicability is itself a question, was surveyed in \cite{crighton1985kutta,taha2019viscous,wang2025analysis,zhu2020unsteady,ayton2016importance}.  The vortex-sheet-from-an-edge problem and its
sensitivity to the choice of edge condition were analyzed in \cite{orszag1970instability,wang2025analysis1,xia2017unsteady,jones1972instability,crighton1972radiation,crighton1974radiation,liu2025bidirectional,rienstra1981sound,davis2016instability} and the monograph of Howe~\cite{howe1998acoustics,liang2025global}.  The
viscous justification of the steady Kutta condition through triple-deck theory
originates in \cite{yu2026from,stewartson1969flow,messiter1970boundary,stewartson1974multistructured,wang2026algebraic,sychev1998asymptotic,peake1994viscous,jobe1974numerical,wang2026damage}; the unsteady and oscillating-edge
viscous structure was studied by Brown \& Daniels~\cite{brown1975viscous},
Daniels~\cite{daniels1975flow,daniels1978unsteady}, and Brown \&
Stewartson~\cite{brown1975wake}.  Boundary-layer receptivity to sound,
in which an inviscid amplitude is fixed by an edge or solvability mechanism, is
reviewed in \cite{goldstein1989boundary,yu2026pattern,saric2002boundary,goldstein1983evolution,goldstein1985scattering,ruban1984generation,wang2026breakdown,zhong2012direct,zuccher2014boundary,wu2001local,yu2026rigorous}. 
The linearized unsteady lower deck about a uniform shear was solved exactly by
Terent'ev in the vibrating-ribbon problem~\cite{terent1981linear}; the worked
example of Section~\ref{sec:model} is its trailing-edge (plate--wake switching)
analogue.  The downstream/upstream classification of spatial poles we invoke
is the Briggs--Bers criterion~\cite{briggs1964electron,bers1983space,monkewitz1990local,yu2026beyond,hung2015absolute,chen2005convective,antoulinakis2018absolute,cai2026optimal,li2024linear,king2022critical,xie2025local}.
The transform analysis rests on the Wiener--Hopf
technique~\cite{noble1962methods,wang2025multi}; for finite-angle edges it is replaced by Mellin and
functional-difference methods for
wedges~\cite{kondrat1967boundary,lawrie1994exact,davis2016instability,gao2022rolling,wang2026elliptic}.  The
contribution here is to tie the inner (viscous, Fredholm) and outer
(transform, residue) selections together into a single conditional identity,
with all structural hypotheses isolated, and to exhibit a model in which the
inner hypotheses are theorems.
The main contribution is not a full viscous proof for the physical
trailing-edge base flow.  Rather, it is a closed applied-mathematical
selection mechanism: under explicitly stated structural hypotheses, the
unsteady Kutta amplitude is the same scalar in three representations, namely
the outer edge-regularity quotient, the lower-deck adjoint Fredholm quotient,
and the downstream pole residue (\Cref{thm:intro-main}, \Cref{fig:mechanism}).
The inner part of this mechanism is then verified in closed form for the
canonical linear-shear lower-deck model (\Cref{thm:model-anchor}).

Let
\[
        \Gamma_p=(-\infty,0)\times\{0\},\qquad
        \Gamma_w=(0,\infty)\times\{0\},\qquad
        \Pi=\mathbb R^2\setminus(\Gamma_p\cup\Gamma_w),
\]
and let \(O=(0,0)\).  We use \(e^{-i\omega_{\rm phys}t}\), set
\[
        M=\frac Uc\in(0,1),\qquad
        \beta=(1-M^2)^{1/2},\qquad
        k_0=\frac{\omega_{\rm phys}}c,\qquad
        k=\frac{\omega_{\rm phys}L}{U},
\]
and write \(\phi=\phi^{\rm inc}+\phi^{\rm sc}\).  Two length scales are in
play and must be kept distinct.  The outer acoustic--wake problem
\eqref{eq:intro-Lout}--\eqref{eq:intro-Aout} is posed on the hydrodynamic
length \(\ell_\omega=U/\omega_{\rm phys}\), on which the Helmholtz number
\(k_0\ell_\omega=M\) is \(\mathcal O(1)\) uniformly in the Reynolds number; the chord
\(L\) enters only through \(Re\) and the triple-deck scalings of
\Cref{sec:inner}.  All outer coordinates \((x,y)\) below are measured on
\(\ell_\omega\); the intermediate matching between this outer field and the
lower deck is part of the matching map \(\Min\) introduced in
\eqref{eq:forcing-affine} (for the hierarchy of unsteady regions see
\cite{brown1975viscous,daniels1978unsteady}).  In the outer region,
\begin{equation}
\label{eq:intro-Lout}
        \Lout\phi=0,\qquad
        \Lout:=\beta^2\partial_x^2+\partial_y^2
        +2iMk_0\partial_x+k_0^2 .
\end{equation}
On \(\Gamma_p\), \(\gamma_p^\pm\partial_y\phi=0\).  On \(\Gamma_w\), instead
of imposing a degenerate equal-speed convected-sheet closure, we use an
abstract inviscid-sheet transmission law
\[
        \Tsh(\omega_{\rm phys},\Gcal)
        \begin{pmatrix}
        \gamma_w^+\phi\\
        \gamma_w^-\phi\\
        \gamma_w^+\partial_y\phi\\
        \gamma_w^-\partial_y\phi\\
        \eta
        \end{pmatrix}=0,\qquad x>0 .
\]
Here \(\eta(x)e^{-i\omega_{\rm phys}t}\) is the sheet displacement and
\(\Gcal\) denotes the edge geometry, acoustic incidence, and base sheet data.
The equal-speed relations
\[
        \partial_y\phi^\pm=(-i\omega_{\rm phys}+U\partial_x)\eta,\qquad
        (-i\omega_{\rm phys}+U\partial_x)[\phi]=0
\]
are only a neutral limiting model; the downstream instability below belongs to
the full operator \(\Tsh\).
We write the outgoing outer problem as
\begin{equation}
\label{eq:intro-Aout}
        \Aout(\omega_{\rm phys},\Gcal)\Phi=\Fout^{\rm inc},\qquad
        \Phi=(\phi,\eta).
\end{equation}
For a homogeneous normal mode
$\Phi(x,y)=e^{i\alpha x}
        \big(\varphi^+(y),\varphi^-(y),\eta_0\big)$,
one obtains
\[
        (\varphi^\pm)''-\mu(\alpha)^2\varphi^\pm=0,\qquad
        \mu(\alpha)^2=\beta^2\alpha^2+2Mk_0\alpha-k_0^2 ,
\]
with the outgoing branch of \(\mu\).  The sheet law reduces the normal-mode
problem to
\[
        \Bcal(\alpha;\omega_{\rm phys},\Gcal)\mathbf a=0,\qquad
        \Dcal(\alpha;\omega_{\rm phys},\Gcal):=
        \det\Bcal(\alpha;\omega_{\rm phys},\Gcal).
\]
We assume a simple downstream wake pole
\[
        \Dcal(\alpha_{KH};\omega_{\rm phys},\Gcal)=0,\qquad
        \partial_\alpha\Dcal(\alpha_{KH};\omega_{\rm phys},\Gcal)\neq0,\qquad
        \Im\alpha_{KH}<0 .
\]
With \(e^{i\alpha x-i\omega_{\rm phys}t}\), the last inequality corresponds
to downstream spatial growth (a convective spatial instability; see the causal
deformation of \Cref{subsec:flat-residue}).  The associated outgoing
homogeneous field is
$\displaystyle \Phi_{KH}^{\rm out}=(\phi_{KH}^{\rm out},\eta_{KH})$,
normalized once and for all by the wake functional \(\ell_{KH}\) of
\eqref{eq:ellKH-def}, \(\ell_{KH}(\Phi_{KH}^{\rm out})=1\).
The first structural hypothesis is the one-dimensional kernel
$\displaystyle \ker\Aout^{\rm hom}
        =
        \spn\{\Phi_{KH}^{\rm out}\}$.
Thus every outgoing forced outer field has the affine form
\begin{equation}
\label{eq:intro-outer-family}
        \Phi_A^{\rm out}
        =
        \Phi_0^{\rm out}+A\Phi_{KH}^{\rm out},\qquad A\in\mathbb C .
\end{equation}
The unknown scalar \(A\) is the receptivity amplitude.
Near \(O\), let \((r,\theta)\) denote polar coordinates after the local stretching \((x,y)\mapsto(x/\beta,y)\),
\(-\pi<\theta<\pi\).  We assume the standard slit-plane edge pencil has first
nonconstant indicial root \(\lambda=1/2\).  Hence
\[
        \nabla\phi_A^{\rm out}
        =
        C_-(A)r^{-1/2}\Vm(\theta)+\mathcal O(1),
        \qquad
        \Vm(\theta)=\frac12\Psi_-(\theta)e_r+\Psi_-'(\theta)e_\theta,
        \qquad
        \Psi_-(\theta)=\sin\frac{\theta}{2}.
\]
By linearity,
$C_-(A)=C_-^{(0)}+A C_-^{(KH)}$.
The outer regularity form of the Kutta condition is
$C_-(A)=0$.
If \(C_-^{(KH)}\neq0\), this condition alone gives
$\displaystyle A=-\frac{C_-^{(0)}}{C_-^{(KH)}}$.
The central question is why this inviscid regularity condition is selected by
the viscous trailing-edge structure.
For the lower-deck scaling, set
\[
        Re=\frac{UL}{\nu},\qquad
        \varepsilon=Re^{-1/8},\qquad
        x=\varepsilon^3L\,X,\qquad
        y=\varepsilon^5L\,Y,\qquad
        T=\frac{Ut}{\varepsilon^2L}.
\]
Then
\[
        e^{-i\omega_{\rm phys}t}=e^{-i\Omega T},\qquad
        \Omega=\frac{\omega_{\rm phys}\varepsilon^2L}{U}
        =
        \varepsilon^2k
        =
        Re^{-1/4}k .
\]
Thus the distinguished unsteady triple-deck regime is
$\Omega=\mathcal O(1)$ with $k=\mathcal O(Re^{1/4})$.
Linearization of the unsteady lower deck about a steady base state gives
$\LTD(\Omega)W=F$, where $W=(u,v,p,a)^{\mathsf T}$.
The outer-to-inner matching data generated by \eqref{eq:intro-outer-family}
split as
\[
        F_{\rm match}(A)
        =
        C_-(A)\Fsing+\Finc(\Omega)+A\FKH(\Omega),
        \qquad
        \Fsing\in\Eedge,\quad
        \Finc+A\FKH\in\Hcal_\sigma,
\]
where
$\Eedge=\spn\{r^{-1/2}\Vm\}$
is the singular edge-trace space; its lower-deck realization is an
\(|X|^{-1/2}\) line datum in the matching and pressure--displacement
components, and as such it does not belong to the regular data space
\(\Hcal_\sigma\) (\Cref{lem:trace-exclusion}).  Solvability in the bounded
graph domain therefore requires
\[
        \Pi_{\rm sing}F_{\rm match}(A)=0
        \quad\Longleftrightarrow\quad
        C_-(A)=0 .
\]
The Fredholm realization is a closed densely defined operator
$\displaystyle         \LTD(\Omega):\Xcal_\sigma\to\Hcal_\sigma$
between the weighted spaces of \Cref{subsec:weighted-Fredholm}, whose
downstream weight is chosen so that the inner counterpart of the shed wake
mode belongs to the domain (Remark~\ref{rem:kernel-interpretation}).  We assume
\[
        \ind\LTD(\Omega)=0,\qquad
        \ker\LTD(\Omega)^\ast=\spn\{\Psi^\ast(\Omega)\},\qquad
        \langle\FKH(\Omega),\Psi^\ast(\Omega)\rangle_{\Hcal_\sigma}\neq0.
\]
The regular Fredholm condition is
\[
        \Finc(\Omega)+A\FKH(\Omega)\in\Ran\LTD(\Omega)
        \quad\Longleftrightarrow\quad
        \langle\Finc(\Omega)+A\FKH(\Omega),\Psi^\ast(\Omega)\rangle_{\Hcal_\sigma}=0 .
\]
The edge trace and the regular Fredholm projection are linked by the
consistency relation
\begin{equation}
\label{eq:intro-KF-consistency}
        \langle\Finc(\Omega)+A\FKH(\Omega),\Psi^\ast(\Omega)\rangle_{\Hcal_\sigma}
        =
        \chi(\Omega)C_-(A),\qquad
        \chi(\Omega)\neq0 .
\end{equation}
This is not an independent hypothesis: in \Cref{sec:inner}
(Proposition~\ref{prop:KF-noncirc}, Eq.~\eqref{eq:compat-expanded}) it is derived from the
Green identity of Appendix~\ref{app:formal-adjoint} together with the augmented
solvability of {\rm(H3)} below, with \(\chi(\Omega)=-\kappa(\Omega)\); it thus
reduces to nondegeneracy of the edge concomitant, \(\kappa(\Omega)\neq0\).
Equivalently,
$\displaystyle         C_-(A)=0$
iff
$\displaystyle         \Finc(\Omega)+A\FKH(\Omega)\in\Ran\LTD(\Omega)$.
The same relation can be expressed through the finite edge concomitant
\[
        \Bedge:\Eedge\times\ker\LTD(\Omega)^\ast\to\mathbb C,\qquad
        \Bedge(C_-(A)r^{-1/2}\Vm,\Psi^\ast)=C_-(A)\kappa(\Omega),
\]
with
$\displaystyle \kappa(\Omega)=\Bedge(r^{-1/2}\Vm,\Psi^\ast(\Omega))\neq0$.
Thus the singular edge cancellation and the Fredholm projection vanish for the
same value of \(A\).
The transform formulation gives an independent representation of that same
value.  In the flat-plate case, the Kutta-normalized Wiener--Hopf solution is
assumed meromorphic near \(\alpha_{KH}\), i.e., 
$\displaystyle         \Mcal(\alpha;\Omega,\Gcal)
        =
        \frac{\Ncal(\alpha;\Omega,\Gcal)}
             {\Dcal(\alpha;\Omega,\Gcal)} $.
With the wake normalization \(\ell_{KH}\) of \eqref{eq:ellKH-def}, the
coefficient of the downstream wake mode is the residue at \(\alpha_{KH}\)
multiplied by the explicit contour-orientation factor \(i\) of
\eqref{eq:contour-deformation}.  For finite-angle wedges with self-similar
sheet data, the Fourier representation is replaced by a Mellin representation
\[
        \Mcal_{\rm wedge}(s;\Omega,\Gcal)
        =
        \frac{\Ncal_{\rm wedge}(s;\Omega,\Gcal)}
             {\Dcal_{\rm wedge}(s;\Omega,\Gcal)},
        \qquad
        \Arec^{\rm wedge}
        =
        \Res_{s=s_{KH}}\Mcal_{\rm wedge}(s;\Omega,\Gcal),
\]
see Assumption~\ref{ass:wedge-selfsimilar}.
We now state the main result (Figure~\ref{fig:mechanism}).
\begin{theorem}[Fredholm--residue identity for the unsteady Kutta amplitude]
\label{thm:intro-main}
Assume the following hypotheses:
\begin{enumerate}[label=(H\arabic*)]
\item $\ker\Aout^{\rm hom}=\spn\{\Phi_{KH}^{\rm out}\}$, $\Dcal(\alpha_{KH};\omega_{\rm phys},\Gcal)=0$, 
$\Dcal_\alpha(\alpha_{KH};\omega_{\rm phys},\Gcal)\neq0$,
$\Im\alpha_{KH}<0$;
\item $\nabla\phi_A^{\rm out}=C_-(A)r^{-1/2}\Vm+\mathcal O(1)$,
$C_-(A)=C_-^{(0)}+AC_-^{(KH)}$, $C_-^{(KH)}\neq0$;\\
\item $\LTD(\Omega):\Xcal_\sigma\to\Hcal_\sigma$ is Fredholm of index $0$, $\ker\LTD(\Omega)^\ast=\spn\{\Psi^\ast(\Omega)\}$, and for every $A\in\mathbb C$ the matching problem is solvable in 
the augmented class $\Dmatch=\Eedge\oplus\Hcal_\sigma$;
\item $\kappa(\Omega)=\Bedge(r^{-1/2}\Vm,\Psi^\ast(\Omega))\neq0$
(with {\rm(H2)}--{\rm(H3)} this implies
$\langle\FKH,\Psi^\ast\rangle\neq0$ and
\eqref{eq:intro-KF-consistency} with $\chi=-\kappa$);\\
\item $\Mcal(\alpha;\Omega,\Gcal)=\Ncal(\alpha;\Omega,\Gcal)/
\Dcal(\alpha;\Omega,\Gcal)$ is the Kutta-normalized meromorphic transform response near $\alpha_{KH}$.
\end{enumerate}
Then bounded viscous--inviscid matching
 selects a unique amplitude
\[
        \Arec(\Omega,\Gcal)
        =
        -\frac{C_-^{(0)}}{C_-^{(KH)}}
        =
        -\frac{
        \langle \Finc(\Omega),\Psi^\ast(\Omega)\rangle_{\Hcal_\sigma}}
        {
        \langle \FKH(\Omega),\Psi^\ast(\Omega)\rangle_{\Hcal_\sigma}}
        =
        i\Res_{\alpha=\alpha_{KH}}
        \Mcal(\alpha;\Omega,\Gcal),
\]
the last equality holding for the wake normalization
\(\ell_{KH}(\Phi_{KH}^{\rm out})=1\) of \eqref{eq:ellKH-def}.
If \(\alpha_{KH}\) is simple, then
$\displaystyle         \Arec(\Omega,\Gcal)
        =
        \frac{i\,\Ncal(\alpha_{KH};\Omega,\Gcal)}
             {\partial_\alpha\Dcal(\alpha_{KH};\Omega,\Gcal)}$.
Equivalently,
$\displaystyle         C_-(A)=0$
iff
$\displaystyle         \Pi_{\rm sing}F_{\rm match}(A)=0$
iff
$\displaystyle         \langle\Finc+A\FKH,\Psi^\ast\rangle_{\Hcal_\sigma}=0$
iff
$\displaystyle         \Finc+A\FKH\in\Ran\LTD(\Omega)$.
For a finite-angle wedge satisfying the analogous Mellin hypotheses, including
the self-similarity Assumption~\ref{ass:wedge-selfsimilar},
$\displaystyle         \Arec^{\rm wedge}(\Omega,\Gcal)
        =
        -\frac{
        \langle \Finc^{\rm wedge},\Psi_{\rm wedge}^\ast\rangle_{\Hcal_\sigma}}
        {
        \langle \FKH^{\rm wedge},\Psi_{\rm wedge}^\ast\rangle_{\Hcal_\sigma}}
        =
        \Res_{s=s_{KH}}
        \Mcal_{\rm wedge}(s;\Omega,\Gcal)$.
\end{theorem}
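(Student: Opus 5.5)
The argument is a chain of linear-algebraic identifications on the one-parameter affine family \eqref{eq:intro-outer-family}, carried out in three stages.

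\textbf{Stage 1 (edge cancellation equals Fredholm compatibility).} By (H2), the map $A\mapsto C_-(A)=C_-^{(0)}+AC_-^{(KH)}$ is affine with nonzero slope. Hence $C_-(A)=0$ has the unique root $A_\ast=-C_-^{(0)}/C_-^{(KH)}$.

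Next I would derive the consistency relation \eqref{eq:intro-KF-consistency}, which is where (H3)--(H4) enter. By (H3), for each $A$ the matching problem has a solution $W_A$ in the augmented class $\Dmatch=\Eedge\oplus\Hcal_\sigma$. Pair the equation $\LTD(\Omega)W_A=F_{\rm match}(A)$ with $\Psi^\ast$ and apply the Green identity of Appendix~\ref{app:formal-adjoint} on a domain with a small disc of radius $\delta$ around the edge excised. The volume term $\langle W_A,\LTD^\ast\Psi^\ast\rangle$ vanishes. The boundary terms at infinity vanish because of the weights defining $\Xcal_\sigma$. The only surviving contribution as $\delta\to0$ is the edge concomitant applied to the singular component $C_-(A)r^{-1/2}\Vm$. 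This gives
\[
\langle \Finc+A\FKH,\Psi^\ast\rangle_{\Hcal_\sigma}=-\Bedge(C_-(A)r^{-1/2}\Vm,\Psi^\ast)=-\kappa(\Omega)C_-(A).
\]
Thus $\chi=-\kappa\neq0$ by (H4). Comparing the coefficients of $A$, we get $\langle\FKH,\Psi^\ast\rangle=-\kappa C_-^{(KH)}\neq0$ and $\langle\Finc,\Psi^\ast\rangle=-\kappa C_-^{(0)}$. The ratio then yields the second expression for $\Arec$.

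The chain of equivalences follows from three facts:
\begin{itemize}
\item the closed-range characterization $\Ran\LTD=(\ker\LTD^\ast)^\perp$, valid since $\LTD$ is Fredholm and $\ker\LTD^\ast=\spn\{\Psi^\ast\}$;
\item $\Pi_{\rm sing}F_{\rm match}(A)=C_-(A)\Fsing$, together with Lemma~\ref{lem:trace-exclusion}, which gives $\Fsing\notin\Hcal_\sigma$;
\item the identity just derived.
\end{itemize}
Bounded matching forces $W_A\in\Xcal_\sigma$. That requires the $\Eedge$ component to vanish, which singles out $A_\ast$ uniquely.

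\textbf{Stage 2 (residue).} Under (H5), invert the Kutta-normalized transform along the real contour. Deform the contour into the lower half-plane past $\alpha_{KH}$; this is allowed since $\Im\alpha_{KH}<0$ and the Briggs--Bers/causal ordering of \Cref{subsec:flat-residue} applies. The field for $x>0$ then splits into the pole contribution $-2\pi i\,\Res\cdot e^{i\alpha_{KH}x}(\dots)/(2\pi)$ plus a decaying branch-cut remainder. The orientation factor of \eqref{eq:contour-deformation} converts this into $i\Res$ times the mode.

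Because $\Mcal$ is Kutta-normalized, its inverse transform is the member of \eqref{eq:intro-outer-family} with $C_-=0$, i.e.\ $\Phi^{\rm out}_{A_\ast}$. Applying $\ell_{KH}$, which annihilates $\Phi_0^{\rm out}$ (or the nonwake remainder) by construction in \eqref{eq:ellKH-def} and equals $1$ on $\Phi_{KH}^{\rm out}$, gives $A_\ast=i\Res_{\alpha_{KH}}\Mcal$. For a simple zero of $\Dcal$, the residue is $\Ncal/\partial_\alpha\Dcal$.

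\textbf{Stage 3 (wedge).} For the wedge I would repeat the argument verbatim with the Mellin transform under Assumption~\ref{ass:wedge-selfsimilar}. The orientation factor is absorbed into the normalization, which removes the $i$.

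The main obstacle is Stage 1's limit $\delta\to0$ in the Green identity. One must show that cross terms between the $\mathcal O(1)$ regular part of $W_A$ and $\Psi^\ast$ vanish, and that only the $r^{-1/2}$ trace pairs nontrivially, yielding exactly $C_-(A)\kappa$. This needs the local asymptotics of $\Psi^\ast$ near the edge; I would take these from the indicial analysis for the adjoint pencil. A secondary care point is checking that $\ell_{KH}$ kills the branch-cut contribution in Stage 2.
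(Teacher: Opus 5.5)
Your Stages 1 and 3 follow the paper's route. Reading the Green identity under the augmented solvability of Hypothesis~\ref{hyp:Fredholm-LD}(ii) as an identity in $A$ and matching affine coefficients is exactly Proposition~\ref{prop:KF-noncirc}. The trace-exclusion/closed-range chain is Theorem~\ref{thm:conditional-viscous-Kutta}. Two small points on Stage 1:
\begin{itemize}
\item The edge limit you call the main obstacle is assumed in Hypothesis~\ref{hyp:Fredholm-LD}(ii), which posits the Green identity for $(W_A,\Psi^\ast)$ with finite edge concomitant. No adjoint indicial analysis is needed at this level.
\item $\Bedge$ is not only a small-disc flux. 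Since $\Fsing$ is a line datum with $|X|^{-1/2}$ profile along the whole axis, it also contributes the trace pairing $\fp\int_{\mathbb R}(g_K^\sharp b+g_\infty^\sharp\mu)\,dX$.
\end{itemize}

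The genuine gap is in Stage 2, at exactly the step that produces the factor $i$.
\begin{itemize}
\item Inverting along the real contour is the non-causal choice. With $\Im\alpha_{KH}<0$ the pole lies below the real axis, so the upward closure for $x>0$ does not collect it. The real-axis field therefore contains no downstream wake wave.
\item The real-axis integral minus the integral over your displaced contour is a clockwise loop around $\alpha_{KH}$. That is why you obtain $-\tfrac{1}{2\pi}\,2\pi i\,\Res=-i\Res$.
\item The integral on the displaced contour is not a decaying remainder for $x>0$. Closing it upward returns $+i\Res\,e^{i\alpha_{KH}x}$, which cancels your pole term.
\item Your final $+i$ is then asserted by appeal to \eqref{eq:contour-deformation}, which contradicts your own computation.
\end{itemize}

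What is needed is the following. Take $\Gamma$ to be the causal contour continued from $\sigma_t\gg1$ without pinch. It passes \emph{below} $\alpha_{KH}$ because the pole carries the Briggs--Bers label $\mathcal P_{\rm down}$; it is this label, not the sign of $\Im\alpha_{KH}$, that fixes which side the pole is on. For $x>0$, $e^{i\alpha x}$ decays upward, so you close $\Gamma$ upward, counterclockwise. This encloses $\alpha_{KH}$ with factor $\tfrac{1}{2\pi}\cdot2\pi i=i$. The other downstream contributions (the cut from $\alpha_+$ and any other members of $\mathcal P_{\rm down}$) grow more slowly than $e^{|\Im\alpha_{KH}|x}$ when $\alpha_{KH}$ is the most amplified one, so $\ell_{KH}$ annihilates them. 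This also settles your ``secondary care point''.

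Separately, $\ell_{KH}(\Phi_0^{\rm out})=0$ does not follow from \eqref{eq:ellKH-def}, which only normalizes $\Phi_{KH}^{\rm out}$. Replacing $\Phi_0^{\rm out}$ by $\Phi_0^{\rm out}+b\,\Phi_{KH}^{\rm out}$ shifts $-C_-^{(0)}/C_-^{(KH)}$ by $-b$ but leaves the residue unchanged. The identity therefore requires the particular solution to be chosen KH-free. The paper uses this implicitly when it reads $\eta_{KH}=A\,e^{i\alpha_{KH}x}$ in Section~\ref{subsec:flat-residue}; you should state it explicitly as a normalization.

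Likewise, in the wedge case the $i$ disappears because the inverse Mellin transform carries $1/(2\pi i)$. It is not absorbed into a renormalization of $\ell_{KH}$, which Assumption~\ref{ass:wedge-selfsimilar} already fixes as the coefficient of $r^{-s_{KH}}$.
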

\begin{proof}[Proof at the structural level]
By (H1), every outgoing forced outer solution is
\(\Phi_A^{\rm out}=\Phi_0^{\rm out}+A\Phi_{KH}^{\rm out}\).  By (H2), its
singular edge trace is \(C_-(A)r^{-1/2}\Vm\).  Since bounded lower-deck
matching has data in \(\Hcal_\sigma\) and the singular line datum is excluded
from \(\Hcal_\sigma\) by Lemma~\ref{lem:trace-exclusion}, the
\(\Eedge\)-component of \(F_{\rm match}(A)\) must vanish; hence \(C_-(A)=0\),
which gives \(A=-C_-^{(0)}/C_-^{(KH)}\).  By (H3), the regular problem is
solvable exactly when the Fredholm projection against \(\Psi^\ast\) vanishes.
By (H3)--(H4) and Proposition~\ref{prop:KF-noncirc}, that Fredholm projection equals
\(-\kappa(\Omega)C_-(A)\), so the Fredholm-selected and Kutta-selected
amplitudes coincide, giving the adjoint quotient.  Finally, by (H5) and
uniqueness of the Kutta-normalized outer field, the transform solution has the
same amplitude; the causal inverse-transform deformation of
\Cref{subsec:flat-residue} identifies this coefficient, in the normalization
\(\ell_{KH}(\Phi_{KH}^{\rm out})=1\), with \(i\) times the residue at
\(\alpha_{KH}\).  The simple-pole formula is the Laurent coefficient of
\(\Ncal/\Dcal\).
\end{proof}
The theorem is conditional in the following explicit places:
\[
        \text{simple outgoing wake pole},\qquad
        \text{edge indicial root }\lambda=1/2,
\]
\[
        \text{Fredholm lower-deck realization with augmented solvability},
\]
\[
        \text{nonzero edge concomitant},\qquad
        \text{meromorphic Kutta-normalized transform}.
\]
Figure~\ref{fig:mechanism-flowchart} shows the flowchart of our model. 
\Cref{sec:outer,sec:inner} develop each object and prove the algebraic
implications; \Cref{sec:model} verifies the inner hypotheses {\rm(H3)}--{\rm(H4)}
in closed form for the linear-shear model, where the adjoint state is an
Airy-derivative field, the Wiener--Hopf kernel and wake pole are explicit, and
\(\kappa(\Omega)\neq0\) off a discrete resonance set; \Cref{sec:transform}
gives the transform representation and pole-residue formulae;
\Cref{sec:conclusion} discusses scope and limitations;
Appendix~\ref{app:formal-adjoint} derives the formal adjoint and edge concomitant; and
Appendix~\ref{app:wedge} outlines the Mellin analogue for finite-angle wedges.
\begin{figure}[t]
\centering
\setlength{\fboxsep}{10pt}%
\fbox{$\begin{array}{c}
\text{outer edge regularity}\qquad C_-(A)=0\\[2mm]
\Big\Updownarrow\\[2mm]
\text{lower-deck Fredholm compatibility}\qquad
\langle\Finc+A\FKH,\Psi^\ast\rangle_{\Hcal_\sigma}=0\\[2mm]
\Big\Updownarrow\\[2mm]
\text{transform pole residue}\qquad
A=i\Res_{\alpha=\alpha_{KH}}\Mcal(\alpha;\omega_{\rm phys},\Gcal)
\end{array}$}
\caption{The selection mechanism of \Cref{thm:intro-main}: the unsteady Kutta
amplitude is the same scalar in three representations.  The equivalences are
\Cref{thm:conditional-viscous-Kutta} and \Cref{thm:flat-pole-residue}; the
nondegeneracies making them equivalences are {\rm(H2)}--{\rm(H4)}.}
\label{fig:mechanism}
\end{figure}
\clearpage
% =====================================================================
\section{Outer acoustic--wake problem and the edge obstruction}
\label{sec:outer}
Let
\[
        \Gamma_p=(-\infty,0)\times\{0\},\qquad
        \Gamma_w=(0,\infty)\times\{0\},\qquad
        \Pi=\mathbb R^2\setminus(\Gamma_p\cup\Gamma_w).
\]
The edge is \(O=(0,0)\).  The incident acoustic field is denoted by
\(\phi^{\rm inc}\); the total outer potential is
\[
        \phi=\phi^{\rm inc}+\phi^{\rm sc},\qquad
        \phi^\pm(x)=\lim_{y\to0^\pm}\phi(x,y).
\]
We use the convention \(e^{-i\omega_{\rm phys}t}\), and write
\[
        M=\frac Uc\in(0,1),\qquad
        \beta=(1-M^2)^{1/2},\qquad
        k_0=\frac{\omega_{\rm phys}}{c},\qquad
        k=\frac{\omega_{\rm phys}L}{U}.
\]
As stated in \Cref{sec:intro-main}, the outer coordinates are measured on the
hydrodynamic length \(\ell_\omega=U/\omega_{\rm phys}\), so that the outer
problem is uniformly \(\mathcal O(1)\) in \(Re\) throughout the distinguished regime
\eqref{eq:distinguished-LD-frequency}. 
The physical configuration of the problem is illustrated in Figure~\ref{fig:flow_schematic}, which consists of a semi-infinite rigid plate ($\Gamma_p$) and a downstream wake sheet ($\Gamma_w$). 

\begin{figure}[htbp]
    \centering
    \includegraphics[width=0.8\textwidth]{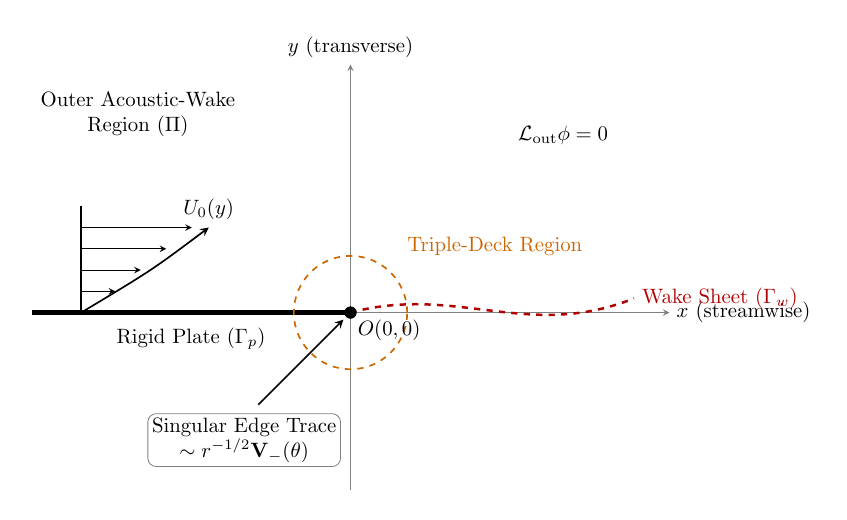}
    \caption{Schematic of the trailing-edge flow configuration. The physical domain includes the semi-infinite rigid plate $\Gamma_p$, the downstream wake sheet $\Gamma_w$, and the incoming boundary-layer velocity profile $U_0(y)$. The local triple-deck region and the outer acoustic-wake region around the trailing edge $O(0,0)$ are also highlighted.}
    \label{fig:flow_schematic}
\end{figure}
\clearpage
% ---------------------------------------------------------------------
\subsection{Convected Helmholtz field and abstract sheet operator}
\label{subsec:outer-CH-sheet}
In \(\Pi\) the scattered potential satisfies
\begin{equation}
\label{eq:CH}
        \Lout\phi^{\rm sc}=0,\qquad
        \Lout:=\beta^2\partial_x^2+\partial_y^2
        +2iMk_0\partial_x+k_0^2 .
\end{equation}
Equivalently, for the total potential,
\[
        \Lout\phi=0,\qquad
        \phi-\phi^{\rm inc}\ \text{outgoing}.
\]
On the plate with $x<0$,
$\displaystyle \gamma_p^\pm(\partial_y\phi)=0$,
where \(\gamma_p^\pm f=f(x,0^\pm)\).  On the wake we impose a linear
inviscid-sheet transmission law
\begin{equation}
\label{eq:abstract-sheet}
        \Tsh(\omega_{\rm phys},\Gcal)
        \begin{pmatrix}
        \gamma_w^+\phi\\[1mm]
        \gamma_w^-\phi\\[1mm]
        \gamma_w^+\partial_y\phi\\[1mm]
        \gamma_w^-\partial_y\phi\\[1mm]
        \eta
        \end{pmatrix}
        =
        0,\qquad x>0,
\end{equation}
where \(\gamma_w^\pm f=f(x,0^\pm)\), \(\eta(x)e^{-i\omega_{\rm phys}t}\) is
the sheet displacement, and \(\Gcal\) denotes the edge geometry, acoustic
incidence, and base sheet data.  Thus
\[
        \Gcal=(M,\omega_{\rm phys},\hbox{edge angle},\hbox{sheet strength},
        \hbox{incidence data},\ldots).
\]
The neutral equal-speed relations
\[
        \partial_y\phi^\pm=(-i\omega_{\rm phys}+U\partial_x)\eta,\qquad
        (-i\omega_{\rm phys}+U\partial_x)[\phi]=0,\qquad
        [\phi]:=\phi^+-\phi^-,
\]
are regarded only as a limiting convected-sheet model.  The Kelvin--Helmholtz
branch used below is attached to the full operator \(\Tsh\), not to this
degenerate equal-\(U\) limit~\cite{orszag1970instability,crighton1974radiation}.
It is useful to collect the boundary data into
        $\displaystyle \mathbf g_w(\phi,\eta)
        :=
        \big(\gamma_w^+\phi,\gamma_w^-\phi,
        \gamma_w^+\partial_y\phi,\gamma_w^-\partial_y\phi,\eta\big)^{\mathsf T}$.
The outer problem is therefore
\begin{equation}
\label{eq:outer-operator-problem}
        \Aout(\omega_{\rm phys},\Gcal)
        \begin{pmatrix}\phi\\ \eta\end{pmatrix}
        =
        \begin{pmatrix}
        \Lout\phi\\
        \gamma_p^\pm\partial_y\phi\\
        \Tsh(\omega_{\rm phys},\Gcal)\mathbf g_w(\phi,\eta)
        \end{pmatrix}
        =
        \Fout^{\rm inc},
\end{equation}
with outgoing radiation for the acoustic part and downstream causality for the
hydrodynamic sheet modes.  Here \(\Fout^{\rm inc}\) is the boundary forcing
obtained after subtracting \(\phi^{\rm inc}\).
For later use we introduce the local trace spaces
\[
        \mathfrak H_{\rm out}
        :=
        H^1_{\rm loc}(\Pi)\times H^{1/2}_{\rm loc}(\Gamma_w),
        \qquad
        \mathfrak Y_{\rm out}
        :=
        H^{-1}_{\rm loc}(\Pi)\times
        H^{-1/2}_{\rm loc}(\Gamma_p)\times
        \mathfrak Z_w ,
\]
and regard
$\displaystyle         \Aout(\omega_{\rm phys},\Gcal):
        \mathfrak D(\Aout)\subset\mathfrak H_{\rm out}
        \longrightarrow
        \mathfrak Y_{\rm out}$
as the outgoing outer acoustic--wake operator.  The precise Banach realization
is not needed below; only the one-dimensional kernel and the
local edge trace are used.
% ---------------------------------------------------------------------
\subsection{Assumed simple downstream wake pole}
\label{subsec:simple-KH-pole}
For a normal mode
        $\displaystyle (\phi,\eta)(x,y)=e^{i\alpha x}\,
        \big(\varphi^+(y),\varphi^-(y),\eta_0\big)$,
\eqref{eq:CH} gives
$\displaystyle         (\varphi^\pm)''-\mu(\alpha)^2\varphi^\pm=0$ with
$\displaystyle         \mu(\alpha)^2=\beta^2\alpha^2+2Mk_0\alpha-k_0^2 $.
The branch is fixed by the outgoing/decaying condition
\[
        \Re\mu(\alpha)>0
        \quad\text{on the physical inversion contour}.
\]
Solving \(\mu(\alpha)^2=0\) with \(\beta^2=(1-M)(1+M)\) gives
\(\alpha=k_0(-M\pm1)/\beta^2\), i.e. the acoustic branch points
\begin{equation}
\label{eq:branch-points}
        \alpha_+=\frac{k_0}{1+M},\qquad
        \alpha_-=-\frac{k_0}{1-M},
        \qquad\text{i.e. } \alpha_\pm=\pm\frac{k_0}{1\pm M}.
\end{equation}
These are the downstream-propagating wavenumber \(\omega_{\rm phys}/(c+U)\) and
the upstream wavenumber \(-\omega_{\rm phys}/(c-U)\).
The sheet law \eqref{eq:abstract-sheet} reduces the homogeneous normal-mode
problem to a finite-dimensional algebraic system
\[
        \Bcal(\alpha;\omega_{\rm phys},\Gcal)\mathbf a=0,
        \qquad
        \mathbf a=(a_+,a_-,b_+,b_-,\eta_0)^{\mathsf T}.
\]
Its dispersion determinant is
        $\displaystyle \Dcal(\alpha;\omega_{\rm phys},\Gcal)
        :=
        \det \Bcal(\alpha;\omega_{\rm phys},\Gcal)$,
after removal of nonphysical normalization factors.
\begin{assumption}[Simple downstream wake pole]
\label{ass:simple-KH}
There exists \(\alpha_{KH}\in\mathbb C\) such that
\begin{equation}
\label{eq:KH-pole-ass}
        \Dcal(\alpha_{KH};\omega_{\rm phys},\Gcal)=0,\qquad
        \partial_\alpha\Dcal(\alpha_{KH};\omega_{\rm phys},\Gcal)\neq0,
        \qquad
        \Im\alpha_{KH}<0 ,
\end{equation}
and
        $\displaystyle \ker\Bcal(\alpha_{KH};\omega_{\rm phys},\Gcal)
        =
        \spn\{\mathbf a_{KH}\}$.
The associated outgoing homogeneous field is denoted
\[
        \Phi_{KH}^{\rm out}
        :=
        (\phi_{KH}^{\rm out},\eta_{KH}),\qquad
        \phi_{KH}^{\rm out}(x,y)
        \sim e^{i\alpha_{KH}x}\varphi_{KH}(y),
        \quad x\to+\infty .
\]
\end{assumption}
With the convention \(e^{i\alpha x-i\omega_{\rm phys}t}\), the inequality
\(\Im\alpha_{KH}<0\) corresponds to downstream spatial growth---a convectively
unstable wake mode in the Briggs--Bers sense~\cite{briggs1964electron,bers1983space,
monkewitz1990local}; the causal contour argument that justifies collecting it
downstream is given in \Cref{subsec:flat-residue}.  The normalization is fixed
once and for all by the explicit wake functional
\begin{equation}
\label{eq:ellKH-def}
        \ell_{KH}(\Phi)
        :=
        \lim_{x\to+\infty}e^{-i\alpha_{KH}x}\,\eta(x),
        \qquad
        \ell_{KH}(\Phi_{KH}^{\rm out})=1 ,
\end{equation}
i.e.\ the normalized wake mode has unit displacement amplitude,
\(\eta_{KH}(x)=e^{i\alpha_{KH}x}\).
We also record the abstract homogeneous assumption used below:
\begin{equation}
\label{eq:outer-kernel-ass}
        \ker \Aout^{\rm hom}(\omega_{\rm phys},\Gcal)
        =
        \spn\{\Phi_{KH}^{\rm out}\}.
\end{equation}
The analysis does not require an explicit formula for \(\Dcal\).  It requires
only \eqref{eq:KH-pole-ass} and \eqref{eq:outer-kernel-ass}.
% ---------------------------------------------------------------------
\subsection{One-dimensional kernel}
\label{subsec:outer-ambiguity}
Let
\[
        \Phi=(\phi,\eta),\qquad
        \Phi_0^{\rm out}=(\phi_0^{\rm out},\eta_0)
\]
be one outgoing forced solution of \eqref{eq:outer-operator-problem}.  By
\eqref{eq:outer-kernel-ass},
        $\displaystyle \Phi_A^{\rm out}
        =
        \Phi_0^{\rm out}+A\Phi_{KH}^{\rm out}$ for
        $A\in\mathbb C$,
is the complete outgoing affine family with the same incident acoustic forcing.
\begin{proposition}[One-dimensional kernel]
\label{prop:outer-ambiguity}
Assume \eqref{eq:outer-kernel-ass}.  If
\(\Aout\Phi_0^{\rm out}=\Fout^{\rm inc}\), then
        $\displaystyle \Aout\Phi_A^{\rm out}=\Fout^{\rm inc}$ for any $A\in\mathbb C$.
Conversely, if \(\Aout\widetilde\Phi^{\rm out}=\Fout^{\rm inc}\) and
\(\widetilde\Phi^{\rm out}\) satisfies the same outgoing convention, then
        $\displaystyle \widetilde\Phi^{\rm out}
        =
        \Phi_0^{\rm out}+A\Phi_{KH}^{\rm out}$
for a unique \(A\in\mathbb C\).
\end{proposition}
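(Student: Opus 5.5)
The argument is the standard linear-algebra fact that solutions of an inhomogeneous linear problem form an affine space modelled on the kernel, combined with hypothesis \eqref{eq:outer-kernel-ass}. The one point needing care is that the outgoing/causal convention must be preserved under linear combinations, so that the homogeneous operator \(\Aout^{\rm hom}\) acts on a linear space.

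\emph{Forward direction.} Since \(\Aout(\omega_{\rm phys},\Gcal)\) in \eqref{eq:outer-operator-problem} is linear, we have
\[
\Aout\Phi_A^{\rm out}=\Aout\Phi_0^{\rm out}+A\,\Aout\Phi_{KH}^{\rm out}.
\]
The first term is \(\Aout\Phi_0^{\rm out}=\Fout^{\rm inc}\) by assumption. For the second term, Assumption~\ref{ass:simple-KH} and \eqref{eq:outer-kernel-ass} give \(\Phi_{KH}^{\rm out}\in\ker\Aout^{\rm hom}\). Here \(\Aout^{\rm hom}\) is \(\Aout\) acting on the same outgoing class with zero incident forcing, so \(\Aout\Phi_{KH}^{\rm out}=0\). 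I will check explicitly that the outgoing class is a linear subspace of \(\mathfrak D(\Aout)\):
\begin{itemize}
\item outgoing radiation for the acoustic part is a linear condition;
\item downstream causality (collecting only \(\alpha_{KH}\) with \(\Im\alpha_{KH}<0\) downstream) is also a linear condition.
\end{itemize}
Hence \(\Phi_A^{\rm out}\) still satisfies the outgoing convention for every \(A\in\mathbb C\).

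\emph{Converse direction.} Set \(\Delta:=\widetilde\Phi^{\rm out}-\Phi_0^{\rm out}\). Both fields obey the same outgoing convention, so \(\Delta\) lies in the outgoing class, and by linearity \(\Aout\Delta=\Fout^{\rm inc}-\Fout^{\rm inc}=0\). The incident field \(\phi^{\rm inc}\) is the same for both and cancels in the difference, so \(\Delta\) solves the homogeneous problem. Thus \(\Delta\in\ker\Aout^{\rm hom}=\spn\{\Phi_{KH}^{\rm out}\}\), which gives \(\Delta=A\Phi_{KH}^{\rm out}\) for some \(A\).

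\emph{Uniqueness of \(A\).} Suppose \(A\Phi_{KH}^{\rm out}=A'\Phi_{KH}^{\rm out}\). Apply the wake functional \(\ell_{KH}\) of \eqref{eq:ellKH-def}. It is linear, so \(A-A'=(A-A')\ell_{KH}(\Phi_{KH}^{\rm out})=0\). This also gives the explicit formula
\[
A=\ell_{KH}\bigl(\widetilde\Phi^{\rm out}\bigr)-\ell_{KH}\bigl(\Phi_0^{\rm out}\bigr),
\]
valid whenever both limits exist. The nonvanishing of \(\Phi_{KH}^{\rm out}\) alone would already suffice for uniqueness.

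\emph{Main obstacle.} The only subtle point is definitional: \(\ker\Aout^{\rm hom}\) must be taken in exactly the function class in which \(\widetilde\Phi^{\rm out}\) and \(\Phi_0^{\rm out}\) live, namely \(\mathfrak D(\Aout)\subset\mathfrak H_{\rm out}\) with the same radiation and causality conditions. I will state explicitly that ``same outgoing convention'' means membership in this linear class, and then the argument above closes.
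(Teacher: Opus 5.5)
Your proof is correct and follows the paper's argument. Linearity gives the forward direction, the difference of two outgoing solutions lies in $\ker\Aout^{\rm hom}=\spn\{\Phi_{KH}^{\rm out}\}$, and uniqueness of $A$ follows from $\Phi_{KH}^{\rm out}\neq0$. Your extra checks are harmless refinements: the outgoing class being linear, and the $\ell_{KH}$ formula for $A$. The paper writes that formula as $A=\ell_{KH}(\Phi_A^{\rm out}-\Phi_0^{\rm out})$, applying $\ell_{KH}$ to the difference, which removes your caveat that the individual limits must exist.
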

\begin{proof}
Linearity gives
\(\Aout(\Phi_0^{\rm out}+A\Phi_{KH}^{\rm out})
=\Fout^{\rm inc}+A\Aout\Phi_{KH}^{\rm out}=\Fout^{\rm inc}\).  If
\(\widetilde\Phi^{\rm out}\) is another outgoing solution, then
\(\widetilde\Phi^{\rm out}-\Phi_0^{\rm out}\in\ker\Aout^{\rm hom}\), hence
\(\widetilde\Phi^{\rm out}-\Phi_0^{\rm out}=A\Phi_{KH}^{\rm out}\).  Uniqueness
of \(A\) follows from \(\Phi_{KH}^{\rm out}\neq0\).
\end{proof}
Thus the outer inviscid problem fixes the acoustic field only modulo
\(\Phi_{KH}^{\rm out}\).  The scalar
        $A=\ell_{KH}(\Phi_A^{\rm out}-\Phi_0^{\rm out})$
is the outer receptivity amplitude.
% ---------------------------------------------------------------------
\subsection{Local edge expansion and the coefficient \texorpdfstring{\(C_-(A)\)}{C-(A)}}
\label{subsec:edge-expansion}
Let \((r,\theta)\), \(0<r<r_0\), \(-\pi<\theta<\pi\), denote polar
coordinates after the local stretching
\[
        X_o=\frac{x}{\beta},\qquad Y_o=y ;
\]
the coefficients in the unstretched frame differ by explicit powers of
\(\beta\), immaterial to the affine structure in \(A\) used below.  
Near \(O\),
        $\displaystyle \Lout
        =
        \beta^2\partial_x^2+\partial_y^2+\text{lower-order terms}$,
so after the stretching the principal symbol is \(|\xi|^2+|\zeta|^2\).  Hence
the indicial operator is the slit-plane Laplacian with the principal edge
transmission constraints; the Kondrat'ev theory of corner
asymptotics~\cite{kondrat1967boundary} applies.
We assume the first nonconstant indicial root is the Neumann slit-plane root.
\begin{assumption}[Edge indicial structure]
\label{ass:edge-indicial}
The principal edge pencil \(\mathfrak P(\lambda;\Gcal)\) has
        $\displaystyle \lambda_0=0$ and
        $\displaystyle \lambda_1=\frac12$,
where \(\lambda_1\) is simple modulo the constant mode and carries no
logarithmic terms.  The corresponding angular function may be chosen as
        $\displaystyle \Psi_-(\theta)=\sin\frac{\theta}{2}$
after the local elliptic stretching.  The lower-order convective/acoustic terms
and the sheet trace equations do not shift \(\lambda_1\); they only determine
higher coefficients and linear relations among the edge amplitudes.
\end{assumption}
For the principal slit problem,
\[
        \partial_{\theta}\Psi(\pm\pi)=0,\qquad
        \Psi''+\lambda^2\Psi=0,
\]
so the indicial roots are \(\lambda_n=n/2\), with angular functions
\[
        \Psi_n(\theta)=
        \begin{cases}
        \sin\dfrac{n\theta}{2}, & n\ \text{odd},\\[2mm]
        \cos\dfrac{n\theta}{2}, & n\ \text{even},
        \end{cases}
\]
the Neumann conditions at \(\theta=\pm\pi\) selecting alternating parities.
In particular the first nonconstant mode is \(\Psi_1=\Psi_-=\sin(\theta/2)\).
The singular velocity profile associated with \(\Psi_-\) is
\begin{equation}
\label{eq:Vminus-def}
        \Vm(\theta)
        =
        \frac12\Psi_-(\theta)e_r+\Psi_-'(\theta)e_\theta,
        \qquad
        e_r=(\cos\theta,\sin\theta),\quad
        e_\theta=(-\sin\theta,\cos\theta).
\end{equation}
Equivalently,
\begin{equation}
\label{eq:Vminus-components}
        \Vm\cdot e_x=-\frac12\sin\frac{\theta}{2},\qquad
        \Vm\cdot e_y= \frac12\cos\frac{\theta}{2}.
\end{equation}
Thus
\[
        \Vm\cdot e_x|_{\theta=\pi}=-\frac12,\qquad
        \Vm\cdot e_x|_{\theta=-\pi}=+\frac12,\qquad
        \Vm\cdot e_y|_{\theta=0}=\frac12 .
\]
Define the edge coefficient \(C_-(\Phi)\) by the asymptotic projection
\begin{equation}
\label{eq:Cminus-projection}
        C_-(\Phi)
        :=
        \lim_{\rho\downarrow0}
        \frac{
        \displaystyle
        \int_{-\pi}^{\pi}
        \big(\phi(\rho,\theta)-\bar\phi_\rho\big)\Psi_-(\theta)\,d\theta}
        {\displaystyle
        \rho^{1/2}\int_{-\pi}^{\pi}\Psi_-(\theta)^2\,d\theta},
        \qquad
        \bar\phi_\rho=\frac1{2\pi}\int_{-\pi}^{\pi}\phi(\rho,\theta)\,d\theta .
\end{equation}
Equivalently, \(C_-\) is the coefficient of the \(r^{1/2}\Psi_-\) term in the
Kondrat'ev expansion.
\begin{proposition}[Edge expansion]
\label{prop:edge-expansion}
Assume Assumption~\ref{ass:edge-indicial}.  For each \(A\in\mathbb C\),
\[
        \phi_A^{\rm out}(r,\theta)
        =
        \phi_e(A)
        +
        C_-(A)r^{1/2}\Psi_-(\theta)
        +
        r\Psi_0(A,\theta)
        +
        \mathcal O(r^{3/2})
\]
in \(H^1\)-conormal form as \(r\downarrow0\).  Hence
\begin{equation}
\label{eq:grad-edge-expansion}
        \nabla\phi_A^{\rm out}(r,\theta)
        =
        C_-(A)r^{-1/2}\Vm(\theta)
        +
        \Vz(A,\theta)
        +
        \mathcal O(r^{1/2}).
\end{equation}
Moreover
\begin{equation}
\label{eq:Cminus-affine}
        C_-(A)=C_-^{(0)}+A\,C_-^{(KH)},
        \qquad
        C_-^{(0)}:=C_-(\Phi_0^{\rm out}),\quad
        C_-^{(KH)}:=C_-(\Phi_{KH}^{\rm out}).
\end{equation}
\end{proposition}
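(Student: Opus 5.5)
The plan is to apply Kondrat'ev's corner-asymptotics theory to each member of the affine family separately. Linearity of the edge projection \eqref{eq:Cminus-projection} then gives the affine dependence on \(A\).

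First, fix \(A\) and write \(\phi_A^{\rm out}=\phi_0^{\rm out}+A\phi_{KH}^{\rm out}\), as in Proposition~\ref{prop:outer-ambiguity}. In the stretched variables \((X_o,Y_o)\) on a small disc \(B_{r_0}\), the equation \eqref{eq:CH} reads \(\Delta\phi=-(2iMk_0\beta^{-1}\partial_{X_o}+k_0^2)\phi\). The right-hand side is lower order. The boundary conditions split into two parts: the Neumann conditions \(\gamma_p^\pm\partial_y\phi=0\) on \(\theta=\pm\pi\), and the sheet relations \eqref{eq:abstract-sheet} on \(\theta=0\).

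I would cut off with \(\chi(r)\) equal to \(1\) near \(0\) and move the lower-order and commutator terms to the right-hand side. This produces a slit-plane boundary problem whose principal part is the pencil \(\mathfrak P(\lambda;\Gcal)\). Its forcing lies in a weighted Kondrat'ev space with weight one power of \(r\) better than the solution, since \(\phi_A^{\rm out}\in H^1_{\rm loc}\).

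Second, I invoke the Kondrat'ev residue theorem. When the weight line is moved across the strip \(0\le\Re\lambda<3/2\), the solution picks up one term for each pencil eigenvalue crossed. By Assumption~\ref{ass:edge-indicial}, these eigenvalues are \(\lambda_0=0\) (the constant \(\phi_e(A)\)), the simple root \(\lambda_1=1/2\) with no logarithms (the term \(C_-(A)r^{1/2}\Psi_-\)), and \(\lambda=1\). The \(\lambda=1\) term, together with the particular solution generated by the lower-order terms acting on \(\phi_e\), gives \(r\Psi_0(A,\theta)\).

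The assumption that the lower-order and sheet terms do not shift \(\lambda_1\) is what excludes any \(r^{1/2}\log r\) term. The remainder lies in the weighted space corresponding to \(\mathcal O(r^{3/2})\) in conormal \(H^1\) form. To iterate this shift, I apply the theorem twice. The first step uses \(H^1\) regularity to obtain an expansion up to \(o(r^{1/2})\). The second step feeds the improved decay of the lower-order terms back in to reach \(\mathcal O(r^{3/2})\).

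Third, I differentiate term by term. The conormal form permits this, since each \(r\partial_r\) and \(\partial_\theta\) preserves the remainder class. For the singular term,
\[
\nabla\big(r^{1/2}\Psi_-\big)=r^{-1/2}\big(\tfrac12\Psi_-e_r+\Psi_-'e_\theta\big)=r^{-1/2}\Vm .
\]
The term \(r\Psi_0\) contributes the bounded field \(\Vz(A,\theta)\), and the remainder becomes \(\mathcal O(r^{1/2})\). This gives \eqref{eq:grad-edge-expansion}.

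To match \eqref{eq:Cminus-projection}, I project the expansion onto \(\Psi_-\) after subtracting the angular mean. The constant term and the mean are removed. The coefficient of the \(r\Psi_0\) term, after division by \(\rho^{1/2}\), is \(\mathcal O(\rho^{1/2})\), so it tends to zero. The limit therefore equals the coefficient of \(r^{1/2}\Psi_-\).

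Finally, the functional \(\Phi\mapsto C_-(\Phi)\) defined by \eqref{eq:Cminus-projection} is linear in \(\phi\). Its limit exists for both \(\Phi_0^{\rm out}\) and \(\Phi_{KH}^{\rm out}\) by the expansion just established. Linearity then gives \(C_-(A)=C_-(\Phi_0^{\rm out})+AC_-(\Phi_{KH}^{\rm out})\), which is \eqref{eq:Cminus-affine}.

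The main obstacle is the second step. The sheet condition on \(\theta=0\) is not a standard elliptic boundary condition: \(\Tsh\) couples traces from both sides with the unknown \(\eta\). Kondrat'ev's theory therefore has to be applied to a transmission system, and one must verify the Shapiro--Lopatinskii condition and check that \(\eta\) has a compatible conormal expansion.

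My first approach would be to treat the principal part of \(\Tsh\) as contained in the pencil \(\mathfrak P(\lambda;\Gcal)\), exactly as Assumption~\ref{ass:edge-indicial} postulates. The non-principal coupling would be absorbed into the lower-order forcing, which costs only half a power of \(r\) per iteration. If that fails, I would fall back on a Mellin transform in \(r\) of the localized system, followed by a direct residue computation.
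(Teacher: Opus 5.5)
Your proposal is correct and is essentially the paper's argument: a Kondrat'ev conormal expansion from the local elliptic pencil, Assumption~\ref{ass:edge-indicial} to restrict the exponents below $3/2$ to $\lambda=0,\tfrac12,1$ without logarithms, term-by-term differentiation via $\nabla(r^{1/2}\Psi_-)=r^{-1/2}\Vm$, and linearity of the projection \eqref{eq:Cminus-projection} for the affine law. Your cutoff-and-iteration step in weighted spaces and your check that \eqref{eq:Cminus-projection} returns the $r^{1/2}\Psi_-$ coefficient fill in details the paper leaves implicit. One harmless slip: the lower-order terms acting on the constant $\phi_e$ produce an $\mathcal O(r^2)$ particular solution, so they do not contribute to $r\Psi_0$.
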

\begin{proof}
The local elliptic pencil gives the conormal expansion
\[
        \phi_A^{\rm out}
        =
        \sum_{\lambda\in\Lambda,\ \Re\lambda<3/2}
        r^\lambda\Psi_\lambda(\theta)c_\lambda(A)
        +O(r^{3/2}).
\]
By Assumption~\ref{ass:edge-indicial}, the only terms below \(3/2\) relevant to the
singular velocity are \(\lambda=0\), \(\lambda=1/2\), and \(\lambda=1\), with
no logarithms.  Thus the displayed expansion follows.  Since
$\displaystyle         \nabla(r^{1/2}\Psi_-)
        =
        r^{-1/2}\left(\frac12\Psi_-e_r+\Psi_-'e_\theta\right)$,
\eqref{eq:grad-edge-expansion} follows from \eqref{eq:Vminus-def}.  Finally,
\(\Phi_A^{\rm out}=\Phi_0^{\rm out}+A\Phi_{KH}^{\rm out}\) and the projection
\eqref{eq:Cminus-projection} is linear; hence \eqref{eq:Cminus-affine}.
\end{proof}
The singular pressure is obtained from the linearized Bernoulli relation
\[
        p[\phi]=-\rho_0(-i\omega_{\rm phys}+U\partial_x)\phi .
\]
Since \((-i\omega_{\rm phys})\,r^{1/2}\Psi_-=\mathcal O(r^{1/2})\) and
\(\partial_x(r^{1/2}\Psi_-)=r^{-1/2}\Vm\cdot e_x\),
\begin{equation}
\label{eq:pressure-edge-sing}
        p[\phi_A^{\rm out}]
        =
        -\rho_0U\,C_-(A)r^{-1/2}\Vm(\theta)\cdot e_x
        +\mathcal O(1).
\end{equation}
Thus the same scalar \(C_-(A)\) controls the inverse-square-root velocity
singularity and the leading pressure singularity.
We introduce the singular edge trace space
        $\displaystyle \Eedge
        :=
        \spn\{r^{-1/2}\Vm(\theta)\}$,
and the regular edge data space \(\Redge\) by
\[
        \nabla\phi_A^{\rm out}
        =
        C_-(A)\,r^{-1/2}\Vm+\mathcal R_A,\qquad
        \mathcal R_A\in\Redge .
\]
Hence
        $\displaystyle \Tssing\nabla\phi_A^{\rm out}
        =
        C_-(A)\,r^{-1/2}\Vm
        \in\Eedge $.
On the lower-deck scale \(r=\varepsilon^3 L\,(X^2+\varepsilon^4Y^2)^{1/2}\),
the trace of \(\Eedge\) on the matching components is an
\(|X|^{-1/2}\)-profile line datum.
% ---------------------------------------------------------------------
\subsection{Kutta regularity as \texorpdfstring{\(C_-(A)=0\)}{C-(A)=0}}
\label{subsec:kutta-regularity}
The outer regularity form of the Kutta condition is the annihilation of the
singular edge trace:
\begin{equation}
\label{eq:kutta-Cminus}
        \Tssing\nabla\phi_A^{\rm out}=0
        \quad\Longleftrightarrow\quad
        C_-(A)=0 .
\end{equation}
By \eqref{eq:Cminus-affine},
\begin{equation}
\label{eq:A-kutta}
        A_{\rm Kutta}^{\rm out}
        =
        -\,\frac{C_-^{(0)}}{C_-^{(KH)}},
        \qquad
        C_-^{(KH)}\neq0 .
\end{equation}
\begin{lemma}[Uniqueness of the Kutta-normalized outer field]
\label{lem:kutta-unique}
Assume \eqref{eq:outer-kernel-ass}, Assumption~\ref{ass:edge-indicial}, and
\(C_-^{(KH)}\neq0\).  Then there exists a unique \(A\in\mathbb C\) such that
\(\Phi_A^{\rm out}\) satisfies \eqref{eq:kutta-Cminus}.  It is given by
\eqref{eq:A-kutta}.  If \(\widetilde\Phi^{\rm out}\) is any outgoing solution
with the same incident forcing and \(C_-(\widetilde\Phi^{\rm out})=0\), then
\(\widetilde\Phi^{\rm out}=\Phi_{A_{\rm Kutta}^{\rm out}}^{\rm out}\).
\end{lemma}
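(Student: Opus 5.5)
The argument is a short reduction to the affine structure already established, in three steps: existence, uniqueness within the family, and identification of an arbitrary Kutta-regular solution with that member.

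\emph{Existence.} I would start from Proposition~\ref{prop:outer-ambiguity}. Under \eqref{eq:outer-kernel-ass}, every outgoing solution with the given incident forcing is $\Phi_A^{\rm out}=\Phi_0^{\rm out}+A\Phi_{KH}^{\rm out}$. Under Assumption~\ref{ass:edge-indicial}, Proposition~\ref{prop:edge-expansion} gives the edge coefficient as $C_-(A)=C_-^{(0)}+AC_-^{(KH)}$ by \eqref{eq:Cminus-affine}. This uses the linearity of the projection \eqref{eq:Cminus-projection}, and the existence of its limit for each member of the family. The Kutta condition \eqref{eq:kutta-Cminus} is by definition $C_-(A)=0$. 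Since $C_-^{(KH)}\neq0$, the scalar equation $C_-^{(0)}+AC_-^{(KH)}=0$ has exactly one root, $A=-C_-^{(0)}/C_-^{(KH)}$. This is \eqref{eq:A-kutta}.

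\emph{Uniqueness within the family.} Suppose $A_1$ and $A_2$ both satisfy the condition. Subtracting gives $(A_1-A_2)C_-^{(KH)}=0$, hence $A_1=A_2$.

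\emph{Arbitrary Kutta-regular solutions.} Let $\widetilde\Phi^{\rm out}$ be outgoing with the same forcing. The converse half of Proposition~\ref{prop:outer-ambiguity} gives $\widetilde\Phi^{\rm out}=\Phi_0^{\rm out}+\widetilde A\,\Phi_{KH}^{\rm out}$ for a unique $\widetilde A$. Applying the linear functional $C_-$ yields $C_-(\widetilde\Phi^{\rm out})=C_-^{(0)}+\widetilde A C_-^{(KH)}$. The hypothesis $C_-(\widetilde\Phi^{\rm out})=0$ then forces $\widetilde A=A^{\rm out}_{\rm Kutta}$, so $\widetilde\Phi^{\rm out}=\Phi^{\rm out}_{A_{\rm Kutta}^{\rm out}}$.

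\emph{Main point of care.} The only real check is that $C_-$ is a well-defined linear functional on the whole outgoing solution space. The limit in \eqref{eq:Cminus-projection} must exist for $\Phi_0^{\rm out}$ and $\Phi_{KH}^{\rm out}$ separately, not merely for the combination. Proposition~\ref{prop:edge-expansion} provides this, since Assumption~\ref{ass:edge-indicial} applies to each outgoing solution individually, with no logarithms and a simple root $\lambda=1/2$. Subtracting the mean $\bar\phi_\rho$ removes the $\lambda=0$ term. The $r\Psi_0$ and higher-order terms contribute $o(\rho^{1/2})$ after projection, so the limit isolates $c_{1/2}$ linearly.
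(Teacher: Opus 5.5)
Your proof is correct and follows the paper's argument: you reduce to the affine family via Proposition~\ref{prop:outer-ambiguity}, use the affine edge coefficient \eqref{eq:Cminus-affine}, and solve the scalar equation using \(C_-^{(KH)}\neq0\). Your two extra checks make explicit what the paper's two-line proof leaves implicit: the converse half of Proposition~\ref{prop:outer-ambiguity} for an arbitrary Kutta-regular \(\widetilde\Phi^{\rm out}\), and the well-definedness of \(C_-\) on \(\Phi_0^{\rm out}\) and \(\Phi_{KH}^{\rm out}\) separately.
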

\begin{proof}
Every outgoing solution is \(\Phi_A^{\rm out}\) by
Proposition~\ref{prop:outer-ambiguity}.  The Kutta condition is
\(C_-^{(0)}+A C_-^{(KH)}=0\).  Since \(C_-^{(KH)}\neq0\), the solution is unique
and equals \eqref{eq:A-kutta}.
\end{proof}
Equivalently,
$       \displaystyle  \mathfrak K:
        \Phi_A^{\rm out}\mapsto C_-(A)$
is a nontrivial linear functional on the one-dimensional kernel.  The
outer Kutta quotient is therefore
$\displaystyle         A_{\rm Kutta}^{\rm out}
        =
        -\frac{\mathfrak K(\Phi_0^{\rm out})}
              {\mathfrak K(\Phi_{KH}^{\rm out})}$.
At this stage \eqref{eq:kutta-Cminus} is only an outer regularity condition.
The viscous lower-deck analysis will identify it with a Fredholm compatibility
condition:
\[
        C_-(A)=0
        \quad\Longleftrightarrow\quad
        \Finc+A\FKH\in\Ran\LTD(\Omega)
        \quad\Longleftrightarrow\quad
        \langle \Finc+A\FKH,\Psi^\ast\rangle_{\Hcal}=0 .
\]
Thus the scalar obstruction passed from the outer problem to the inner problem
is precisely
$\displaystyle         C_-(A)=C_-^{(0)}+A C_-^{(KH)}$ .
% =====================================================================
\section{Unsteady lower deck and Fredholm Kutta selection}
\label{sec:inner}
The outer field of \Cref{sec:outer} has
\[
        \nabla\phi_A^{\rm out}
        =
        C_-(A)r^{-1/2}\Vm+\mathcal R_A,\qquad
        C_-(A)=C_-^{(0)}+A C_-^{(KH)} .
\]
We now derive the scalar condition selecting \(A\) from the viscous
trailing-edge region.  The viscous structure is the unsteady triple
deck~\cite{stewartson1969flow,messiter1970boundary,stewartson1974multistructured,sychev1998asymptotic}.
% ---------------------------------------------------------------------
\subsection{Triple-deck scaling and distinguished frequency}
\label{subsec:TD-scaling}
Let
\[
        Re=\frac{UL}{\nu},\qquad
        \varepsilon=Re^{-1/8},\qquad
        x=\varepsilon^3L\,X,\qquad
        y=\varepsilon^5L\,Y .
\]
The main-deck thickness is \(\mathcal O(\varepsilon^4L)\); hence the lower-deck shear
speed at \(y=\mathcal O(\varepsilon^5L)\) is
\[
        U_{LD}=\mathcal O(\varepsilon U),\qquad
        \ell_{LD}=\mathcal O(\varepsilon^3L),\qquad
        t_{LD}=\frac{\ell_{LD}}{U_{LD}}
        =
        \frac{\varepsilon^2L}{U}.
\]
Thus
\[
        T=\frac{t}{t_{LD}}=\frac{Ut}{\varepsilon^2L},\qquad
        e^{-i\omega_{\rm phys}t}=e^{-i\Omega T},
        \qquad
        \Omega=\omega_{\rm phys}t_{LD}
        =
        \frac{\omega_{\rm phys}\varepsilon^2L}{U}.
\]
With \(k=\omega_{\rm phys}L/U\),
$\displaystyle         \Omega=\varepsilon^2 k=Re^{-1/4}k$ .
The distinguished unsteady lower-deck regime is
\begin{equation}
\label{eq:distinguished-LD-frequency}
        \Omega=\mathcal O(1),\qquad
        k=\mathcal O(\varepsilon^{-2})=\mathcal O(Re^{1/4}).
\end{equation}
Use the lower-deck scales
\[
        u_{\rm phys}=\varepsilon U\,U(X,Y,T),\qquad
        v_{\rm phys}=\varepsilon^3 U\,V(X,Y,T),\qquad
        p_{\rm phys}=\rho_0\varepsilon^2U^2P(X,T).
\]
Then the leading lower-deck equations are
$\displaystyle         U_X+V_Y=0$ and
$U_T+UU_X+VU_Y=-P_X+U_{YY}$.
% ---------------------------------------------------------------------
\subsection{Symmetry components and the two-sided wake}
\label{subsec:two-sided}
Downstream of the edge the deck occupies \(Y\in\mathbb R\): the trailing-edge
lower deck consists of two wall layers for \(X<0\), \(\pm Y>0\), merging into a
two-sided wake layer for \(X>0\), with a smooth symmetric steady base state
\[
        U_0(X,-Y)=U_0(X,Y),\qquad
        V_0(X,-Y)=-V_0(X,Y),\qquad
        U_{0Y}(X,0)=0\ \ (X>0),
\]
cf.~\cite{stewartson1969flow,messiter1970boundary,jobe1974numerical}.  Unsteady
perturbations carry, in addition to \((u,v,p,a^\pm)\), a wake-centerline
displacement \(h(X)e^{-i\Omega T}\), with linearized centerline conditions for
\(X>0\)
\begin{equation}
\label{eq:centerline-conditions}
        [u]=0,\qquad [u_Y]=0,\qquad
        v(X,0^\pm)=(-i\Omega+U_c(X)\partial_X)h,\qquad
        U_c(X):=U_0(X,0),
\end{equation}
where \([\,\cdot\,]\) denotes the jump across \(Y=0\) (the base smoothness
\(U_{0Y}(X,0)=0\) removes base-shear jump terms).  Because the base state is
symmetric, the linearized problem decomposes into a symmetric
component (\(u\) even, \(v\) odd in \(Y\), \(h=0\)) and an
antisymmetric component (\(u\) odd, \(v\) even, \(h\neq0\)), each with
its own pressure--displacement interaction map.  On the half-plane
\(Y>0\) these reduce to
\begin{align}
\label{eq:LD-bc}
        &\text{(both components)}\quad u=v=0\quad(X<0,\,Y=0^+);\\
\label{eq:LD-bc-sym}
        &\text{(symmetric)}\quad v=0,\quad u_Y=0\quad(X>0,\,Y=0^+);\\
\label{eq:LD-bc-antisym}
        &\text{(antisymmetric)}\quad u=0,\quad
        v=(-i\Omega+U_c\partial_X)h\quad(X>0,\,Y=0^+).
\end{align}
The Kelvin--Helmholtz wake mode and its matching data are antisymmetric (the
edge angular function \(\sin(\theta/2)\) is odd); incident acoustic data
generically force both components.  All structural statements of this section
(weighted realization, Fredholm hypothesis, adjoint, edge concomitant, and the
selection \Cref{thm:conditional-viscous-Kutta}) are formulated componentwise:
\(\LTD(\Omega)\) denotes the linearized operator of either component on
\(Y>0\), with the corresponding half-plane conditions and interaction law.
For notational ease the displayed formulas below are written for the
symmetric component \eqref{eq:LD-bc-sym}, for which the worked example of
\Cref{sec:model} is carried out in closed form; the antisymmetric component
differs only in the wake-side boundary block and in the explicit wake
impedance, and is discussed in Remark~\ref{rem:model-antisym}.
The upper matching condition is
\[
        U(X,Y,T)=\lambda_0Y+\Delta(X,T)+o(1),
        \qquad Y\to+\infty,\qquad \lambda_0>0 .
\]
For the flat-plate subsonic upper-deck map of the relevant component,
\begin{equation}
\label{eq:LD-interaction}
        P=\Kcal[\Delta],\qquad
        \Kcal=H\partial_X,\qquad
        \widehat{\Kcal a}(\alpha)=|\alpha|\widehat a(\alpha),
\end{equation}
where \(H\) is the Hilbert transform.  For wedge geometries
\(\Kcal\) is replaced by the corresponding wedge pressure--displacement
operator.
Let \((U_0,V_0,P_0,\Delta_0)\) be a steady lower-deck solution:
\[
\begin{gathered}
        U_{0X}+V_{0Y}=0,\qquad
        U_0U_{0X}+V_0U_{0Y}=-P_{0X}+U_{0YY},\\
        U_0\sim\lambda_0Y+\Delta_0(X)\quad(Y\to\infty),\qquad
        P_0=\Kcal[\Delta_0],
\end{gathered}
\]
with \eqref{eq:LD-bc}--\eqref{eq:LD-bc-sym}.  In particular,
\begin{equation}
\label{eq:base-identities}
        U_{0X}+V_{0Y}=0,\qquad
        V_0|_{X>0,Y=0}=0 .
\end{equation}
% ---------------------------------------------------------------------
\subsection{Linearized unsteady lower-deck operator}
\label{subsec:linear-LD-operator}
Set
\[
        U=U_0+u e^{-i\Omega T},\quad
        V=V_0+v e^{-i\Omega T},\quad
        P=P_0+p e^{-i\Omega T},\quad
        \Delta=\Delta_0+a e^{-i\Omega T}.
\]
The linearized system is
\begin{equation}
\label{eq:lin-LD}
\begin{aligned}
        u_X+v_Y&=f_0,\\
        -i\Omega u+U_0u_X+V_0u_Y+U_{0X}u+U_{0Y}v+p_X-u_{YY}&=f_1,\\
        u(X,Y)-a(X)&\to f_\infty(X)\quad(Y\to\infty),\\
        p-\Kcal[a]&=f_K .
\end{aligned}
\end{equation}
The homogeneous boundary conditions are \eqref{eq:LD-bc}--\eqref{eq:LD-bc-sym}.
Write
$\displaystyle         W=(u,v,p,a)^{\mathsf T}$ and 
 $\displaystyle        F=(f_0,f_1,f_\infty,f_K)^{\mathsf T}.$
Then \eqref{eq:lin-LD} with \eqref{eq:LD-bc}--\eqref{eq:LD-bc-sym} defines
$\displaystyle         \LTD(\Omega)W=F $.
The outer-to-inner matching map is denoted
$\displaystyle         \Min:\Phi^{\rm out}\mapsto F_{\rm match}$.
Linearity of matching gives
\begin{equation}
\label{eq:forcing-affine}
        F_{\rm match}(A)
        =
        \Min[\Phi_A^{\rm out}]
        =
        \Finc(\Omega)+A\FKH(\Omega)
        +
        C_-(A)\,\Fsing(\Omega),
\end{equation}
where
\[
        \Finc:=\Min[\Phi_0^{\rm out}]_{\rm reg},\qquad
        \FKH:=\Min[\Phi_{KH}^{\rm out}]_{\rm reg},
\]
the subscript denoting the part of the matching data remaining after the
singular edge trace is split off.  The singular datum is generated by
$\displaystyle         \nabla\phi_A^{\rm out}
        =
        C_-(A)r^{-1/2}\Vm+\mathcal R_A$.
Since
$\displaystyle         r=\varepsilon^3L(X^2+\varepsilon^4Y^2)^{1/2}$,
the singular trace enters the lower deck, at leading order, as the
line datum
\begin{equation}
\label{eq:inner-singular-trace}
        \Fsing
        =
        \big(0,\,0,\,g_\infty^\sharp,\,g_K^\sharp\big),
        \qquad
        g_\infty^\sharp(X)=c_\infty^\pm|X|^{-1/2},\quad
        g_K^\sharp(X)=c_K^\pm|X|^{-1/2}\quad(\pm X>0),
\end{equation}
with the explicit constants
\begin{equation}
\label{eq:singular-trace-constants}
        c_\infty^{+}=0,\qquad
        c_\infty^{-}=\mp\tfrac12\ \ (\theta=\pm\pi),\qquad
        c_K^{+}=0,\qquad
        c_K^{-}=\pm\tfrac12\ \ (\theta=\pm\pi),
\end{equation}
read off from \eqref{eq:Vminus-components} and
\eqref{eq:pressure-edge-sing} (slip and pressure traces upstream; on the wake
side \(\theta=0\) the streamwise trace of \(\Vm\) vanishes and the singular
content is carried by the transverse/displacement trace, which enters the
antisymmetric component analogously).  The normalization of
\(c_\infty^\pm,c_K^\pm\) is fixed once by the matching map and plays no role
beyond the linearity \(\Fsing\mapsto C_-(A)\Fsing\).
The regular lower-deck problem therefore reads
\begin{equation}
\label{eq:LTD-regular-forced}
        \LTD(\Omega)W=\Finc(\Omega)+A\FKH(\Omega),
\end{equation}
provided \(C_-(A)=0\).  The role of the next subsections is to show that this
condition is also forced by Fredholm solvability.
The formal \(L^2\)-adjoint is obtained from the bilinear pairing
$\displaystyle        \iint_{\mathbb R\times\mathbb R_+}
        \{u^\ast R_1+qR_0\}\,dX\,dY$,
where
\[
        R_0=u_X+v_Y,\qquad
        R_1=-i\Omega u+U_0u_X+V_0u_Y+U_{0X}u+U_{0Y}v+p_X-u_{YY}.
\]
Using \eqref{eq:base-identities}, the adjoint bulk equations are
\begin{equation}
\label{eq:formal-adjoint}
        -i\Omega u^\ast-U_0u^\ast_X-V_0u^\ast_Y
        +U_{0X}u^\ast-u^\ast_{YY}-q_X=0,\qquad
        q_Y=U_{0Y}u^\ast ;
\end{equation}
the full derivation, with the line and edge terms, is in
Appendix~\ref{app:formal-adjoint}.
The Lagrange concomitant is
\begin{equation}
\label{eq:LD-concomitant}
        J^X=qu+U_0u^\ast u+u^\ast p,\qquad
        J^Y=qv+V_0u^\ast u-(u^\ast u_Y-u_Y^\ast u).
\end{equation}
The transposed wall/wake conditions are
\[
        u^\ast=0\quad(X<0,Y=0),\qquad
        u_Y^\ast=0\quad(X>0,Y=0),
\]
together with decay at \(Y=\infty\).  Treating \(p=\Kcal[a]\) and
\(u(\cdot,\infty)=a\) with line multipliers gives
\begin{equation}
\label{eq:adjoint-interaction-main}
        b=\bar U_X^\ast,\qquad
        \mu=\Kcal[\bar U_X^\ast]=H[\bar U_{XX}^\ast],
        \qquad
        \bar U^\ast(X):=\int_0^\infty u^\ast(X,Y)\,dY .
\end{equation}
% ---------------------------------------------------------------------
\subsection{Weighted spaces and Fredholm hypothesis}
\label{subsec:weighted-Fredholm}
Let \(\langle X\rangle=(1+X^2)^{1/2}\) and define, for
\(\sigma=(\sigma_-,\vartheta)\) with \(\sigma_->0\) and \(\vartheta>0\),
\begin{equation}
\label{eq:weights-new}
        w_\sigma(X)=
        \begin{cases}
        \langle X\rangle^{\sigma_-},& X<0,\\[1mm]
        e^{-\vartheta X},& X>0,
        \end{cases}
\end{equation}
with \(\vartheta>|\Im\alpha_{KH}^{\rm in}|\), where \(\alpha_{KH}^{\rm in}\)
is the inner wavenumber of the shed wake mode. 
Thus algebraic decay is demanded upstream, while the exponential downstream weight admits the
spatially growing or neutral wake response into the function class; this
choice is what produces the one-dimensional kernel and cokernel below
(Remark~\ref{rem:kernel-interpretation}).  For a scalar field \(g\), set
$\displaystyle         \|g\|_{L^2_\sigma}^2
        =
        \iint_{\mathbb R\times\mathbb R_+}
        |w_\sigma(X)g(X,Y)|^2\,dX\,dY $,
and analogously for line functions.  Define the anisotropic model domain
\begin{equation*}
\begin{aligned}
        \Xcal_\sigma
        :=
        \{W=(u,v,p,a):\;&
        u,u_X,u_{YY},v,v_Y\in L^2_\sigma,\\
        &p,p_X\in L^2_\sigma(\mathbb R),\quad
        a,\Kcal a\in H^1_\sigma(\mathbb R),\\
        &u-a\to0\ (Y\to\infty),\quad
        \eqref{eq:LD-bc}, \eqref{eq:LD-bc-sym}\ \text{hold in trace sense}\},
\end{aligned}
\end{equation*}
a Banach space with its natural norm.  The data space is
\begin{equation}
\label{eq:Hsigma}
        \Hcal_\sigma
        :=
        L^2_\sigma(\Pi)
        \times L^2_\sigma(\Pi)
        \times H^{1/2}_\sigma(\mathbb R)
        \times H^{-1/2}_\sigma(\mathbb R).
\end{equation}
The realization of \(\LTD(\Omega)\) is the bounded operator
$\displaystyle         \LTD(\Omega):\Xcal_\sigma\longrightarrow\Hcal_\sigma$
between these Banach spaces; the adjoint \(\LTD(\Omega)^\ast\) acts on the
dual weight class (in particular, adjoint states decay downstream faster than
\(e^{-\vartheta X}\), localizing the adjoint near the edge and upstream, as in
receptivity theory~\cite{goldstein1989boundary}).
\begin{lemma}[Trace-level exclusion of the singular datum]
\label{lem:trace-exclusion}
\(\Fsing\notin\Hcal_\sigma\); more precisely the line profiles
\(g_\infty^\sharp,g_K^\sharp\sim c^\pm|X|^{-1/2}\) of
\eqref{eq:inner-singular-trace} satisfy
\(|X|^{-1/2}\notin L^2_{\rm loc}(\mathbb R)\supset H^{1/2}_{\rm loc}(\mathbb R)\),
hence
$\displaystyle         \Eedge\cap\Hcal_\sigma=\{0\}$,
and
$\displaystyle         \Dmatch:=\Eedge\oplus\Hcal_\sigma$
is a well-defined direct sum, and the projection \(\Pi_{\rm sing}\) onto the
\(\Eedge\)-component is well defined.
\end{lemma}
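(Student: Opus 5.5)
The plan is to reduce the lemma to a one-variable integrability statement about the line profiles. Then I would deduce the direct-sum and projection claims by elementary linear algebra. First I fix which components of \(\Hcal_\sigma\) the datum \(\Fsing\) would have to occupy. By \eqref{eq:inner-singular-trace}, \(\Fsing=(0,0,g_\infty^\sharp,g_K^\sharp)\). So \(\Fsing\in\Hcal_\sigma\) would force \(g_\infty^\sharp\in H^{1/2}_\sigma(\mathbb R)\) and \(g_K^\sharp\in H^{-1/2}_\sigma(\mathbb R)\). The weight \(w_\sigma\) in \eqref{eq:weights-new} is smooth, positive, and bounded above and below on any compact interval around \(X=0\). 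Multiplication by a cutoff \(\chi\in C_c^\infty\) with \(\chi\equiv1\) near \(0\) therefore maps \(H^{s}_\sigma\) into \(H^{s}\) of compact support. This reduces everything to a local statement at \(X=0\).

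For the \(f_\infty\)-slot I use the elementary fact that \(\int_{-\delta}^{\delta}|X|^{-1}\,dX=\infty\). Hence \(|X|^{-1/2}\chi\notin L^2\), while \(H^{1/2}_{\rm loc}\subset L^2_{\rm loc}\). The nonzero constant \(c_\infty^-=\mp\tfrac12\) in \eqref{eq:singular-trace-constants} sits on the upstream side \(X<0\). So \(g_\infty^\sharp\chi\) contains \(|X|^{-1/2}\mathbf 1_{X<0}\) times a nonzero constant, and the divergence on a one-sided neighbourhood already suffices.

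The main obstacle is the \(f_K\)-slot. Since \(H^{-1/2}\) admits the function \(|X|^{-1/2}\), membership there is not excluded, and the argument cannot rest on \(g_K^\sharp\) alone. I would therefore argue at the level of the pair. \(\Fsing\) is a fixed vector whose \(f_\infty\)-component is not in \(H^{1/2}_\sigma\), so \(\Fsing\notin\Hcal_\sigma\) regardless of the \(f_K\)-component. This is exactly what the statement asserts through the inclusion \(L^2_{\rm loc}\supset H^{1/2}_{\rm loc}\). I would add a remark that the pressure slot contributes nothing to the exclusion.

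Finally, \(\Eedge\) is one-dimensional, spanned by the element whose lower-deck realization is \(\Fsing\). Any element of \(\Eedge\cap\Hcal_\sigma\) equals \(c\,\Fsing\), and \(c\neq0\) would give \(\Fsing\in\Hcal_\sigma\), a contradiction. Hence the intersection is \(\{0\}\). Both spaces sit in a common ambient space of \(L^1_{\rm loc}\)-valued data, so the algebraic sum \(\Dmatch=\Eedge\oplus\Hcal_\sigma\) is direct. Each \(F\in\Dmatch\) decomposes uniquely as \(c\,\Fsing+G\) with \(G\in\Hcal_\sigma\). The map \(\Pi_{\rm sing}F:=c\,\Fsing\) is a well-defined linear projection onto \(\Eedge\). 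For continuity I would equip \(\Dmatch\) with the norm \(|c|+\|G\|_{\Hcal_\sigma}\). Since \(\Eedge\) is finite-dimensional and \(\Hcal_\sigma\) is closed in this sum, \(\Pi_{\rm sing}\) is bounded.
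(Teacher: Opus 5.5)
Your proof is correct and follows the paper's route. The failure of \(|X|^{-1/2}\) to be locally square integrable excludes the \(f_\infty\)-slot of \(\Fsing\) from \(H^{1/2}_\sigma\subset L^2_{\rm loc}\), and the one-dimensionality of \(\Eedge\) then gives \(\Eedge\cap\Hcal_\sigma=\{0\}\) and a well-defined \(\Pi_{\rm sing}\).

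Your observation that a cutoff of \(|X|^{-1/2}\) does lie in \(H^{-1/2}\) is correct. It means the exclusion rests on \(g_\infty^\sharp\) alone, which is nonzero since \(c_\infty^-\neq0\), and this sharpens the paper's proof, which treats both line components as non-\(\Hcal_\sigma\). Two small inaccuracies do not affect the argument, because only the lower bound of \(w_\sigma\) on compacts is used: \(w_\sigma\) has a kink at \(X=0\) rather than being smooth, and the common ambient space should be distributions rather than \(L^1_{\rm loc}\) data.
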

\begin{proof}
\(\int_{0}^{1}|X|^{-1}\,dX=\infty\), so \(|X|^{-1/2}\) is not locally square
integrable on the line; a fortiori it does not belong to
\(H^{1/2}_\sigma(\mathbb R)\) or \(H^{-1/2}\cap L^2\)-regular classes used in
\eqref{eq:Hsigma}.  (Note that the corresponding bulk field
\(r^{-1/2}\Vm\) is locally square integrable in two dimensions; the
exclusion is genuinely a trace-level statement, which is why \(\Fsing\) is
recorded as a line datum in \eqref{eq:inner-singular-trace}.)  Since
\(\Fsing\neq0\) has zero bulk components and non-\(\Hcal_\sigma\) line
components, \(\Eedge\cap\Hcal_\sigma=\{0\}\).
\end{proof}
\begin{hypothesis}[Fredholm lower-deck structure with augmented
solvability]
\label{hyp:Fredholm-LD}
For each fixed \(\Omega\) in \eqref{eq:distinguished-LD-frequency}:
\begin{enumerate}
\item \(\LTD(\Omega):\Xcal_\sigma\to\Hcal_\sigma\) is Fredholm of index zero,
\begin{equation}
\label{eq:Fredholm-index}
        \ind\LTD(\Omega)=0,
\end{equation}
and
\begin{equation}
\label{eq:adjoint-kernel-one}
        \ker \LTD(\Omega)^\ast
        =
        \spn\{\Psi^\ast(\Omega)\},\qquad
        \Psi^\ast=(u^\ast,q,b,\mu) ;
\end{equation}
\item (augmented solvability) for every \(A\in\mathbb C\) there exists a
field \(W_A\) in the augmented graph class associated with
\(\Dmatch=\Eedge\oplus\Hcal_\sigma\) such that
\(\LTD(\Omega)W_A=F_{\rm match}(A)\), and the Green identity
\eqref{eq:app-Green} holds for the pair \((W_A,\Psi^\ast)\) with finite edge
concomitant.
\end{enumerate}
\end{hypothesis}
Part (ii) is the precise statement needed to read the compatibility relation
below as an identity in \(A\); it is not implied by part (i), and in
the abstract setting it is part of limitation (i) of \Cref{sec:conclusion}.  In the linear-shear model
of \Cref{sec:model} it holds automatically: the Wiener--Hopf construction
produces solutions for both the Kutta and the non-Kutta edge
normalizations, the latter realizing exactly the singular class \(\Eedge\)
(this is the classical polynomial ambiguity of the entire
function~\cite{orszag1970instability,crighton1985kutta}).
\begin{remark}[Interpretation of the kernel and cokernel]
\label{rem:kernel-interpretation}
If \(\LTD(\Omega)\) were invertible, the Fredholm condition would be vacuous
and no adjoint selection would occur.  The weight \eqref{eq:weights-new} is
chosen precisely so that this does not happen: the inner counterpart of the
shed wake mode is admitted by the downstream weight and furnishes a kernel
element, \(\dim\ker\LTD(\Omega)\geq1\); index zero then forces a cokernel of
equal dimension, and \eqref{eq:adjoint-kernel-one} asserts that no further
degeneracy occurs.  Physically, the cokernel functional \(\Psi^\ast\) measures
resonant forcing of the shed wake mode, and
\(\langle\FKH,\Psi^\ast\rangle\neq0\) states that the wake-mode matching data
force their own resonance.  The kernel element accounts for the expected inner
non-uniqueness: the amplitude of the shed mode is not determined by the inner
problem alone but by the matching constraint, which is the content of
\Cref{thm:conditional-viscous-Kutta}.  In the model of \Cref{sec:model} all of
this is explicit.
\end{remark}
By the Fredholm alternative,
$\displaystyle         \Ran\LTD(\Omega)
        =
        \{F\in\Hcal_\sigma:
        \langle F,\Psi^\ast(\Omega)\rangle_{\Hcal_\sigma}=0\}$.
Consequently, for the regular forcing in \eqref{eq:LTD-regular-forced},
\begin{equation}
\label{eq:regular-Fredholm-condition}
        \big\langle
        \Finc(\Omega)+A\FKH(\Omega),\Psi^\ast(\Omega)
        \big\rangle_{\Hcal_\sigma}=0,
\end{equation}
and therefore, provided \(\langle\FKH,\Psi^\ast\rangle\neq0\),
$\displaystyle         A_{\rm Fr}(\Omega)
        =
        -
        \frac{
        \langle \Finc(\Omega),\Psi^\ast(\Omega)\rangle_{\Hcal_\sigma}}
        {
        \langle \FKH(\Omega),\Psi^\ast(\Omega)\rangle_{\Hcal_\sigma}} $.
% ---------------------------------------------------------------------
\subsection{Edge singular trace and concomitant}
\label{subsec:edge-concomitant}
By Lemma~\ref{lem:trace-exclusion} the matching data decompose as
\[
        F_{\rm match}(A)
        =
        C_-(A)\Fsing+\Freg(A),
        \qquad
        \Freg(A)=\Finc+A\FKH\in\Hcal_\sigma ,
\]
and the singular coefficient is recovered by
$\displaystyle         \Pi_{\rm sing}F_{\rm match}(A)=C_-(A)\Fsing$ .
The boundary concomitant \eqref{eq:LD-concomitant}, evaluated on a small
edge contour \(\partial B_\rho^+\) and paired with the adjoint state,
together with the singular trace
\eqref{eq:inner-singular-trace} against \((b,\mu)\),
defines a finite edge functional
$\displaystyle         \Bedge:\Eedge\times\ker\LTD(\Omega)^\ast\to\mathbb C$ such that
\[
        \Bedge(G,\Psi^\ast)
        =
        \fp\lim_{\rho\downarrow0}
        \int_{\partial B_\rho^+}
        \big(J^X(G,\Psi^\ast)n_X+J^Y(G,\Psi^\ast)n_Y\big)\,ds
        +
        \fp\int_{\mathbb R}
        \big(g_K^\sharp\,b+g_\infty^\sharp\,\mu\big)\,dX ,
\]
the second term being the trace-level form used in practice (and in
\Cref{sec:model}); see Appendix~\ref{app:formal-adjoint}.  For
\(G=C\,r^{-1/2}\Vm\), linearity gives
$\displaystyle        \Bedge(C\,r^{-1/2}\Vm,\Psi^\ast)
        =
        C\,\Bedge(r^{-1/2}\Vm,\Psi^\ast)$.
\begin{hypothesis}[Edge-concomitant nondegeneracy]
\label{hyp:edge-concomitant}
For the adjoint generator in \eqref{eq:adjoint-kernel-one},
$\displaystyle         \kappa(\Omega)
        :=
        \Bedge(r^{-1/2}\Vm,\Psi^\ast(\Omega))
        \neq0 $.
Then,
\begin{equation}
\label{eq:Bedge-Cminus}
        \Bedge(C_-(A)r^{-1/2}\Vm,\Psi^\ast(\Omega))
        =
        C_-(A)\kappa(\Omega).
\end{equation}
\end{hypothesis}
The full compatibility relation for data in
\(\Dmatch=\Eedge\oplus\Hcal_\sigma\), obtained from the Green identity
\eqref{eq:app-Green} under Hypothesis~\ref{hyp:Fredholm-LD}(ii), is
\begin{equation}
\label{eq:full-compatibility}
        \Bedge(C_-(A)r^{-1/2}\Vm,\Psi^\ast)
        +
        \langle \Finc+A\FKH,\Psi^\ast\rangle_{\Hcal_\sigma}
        =
        0.
\end{equation}
Using \eqref{eq:Bedge-Cminus},
\begin{equation}
\label{eq:compat-expanded}
        C_-(A)\kappa(\Omega)
        +
        \langle \Finc(\Omega)+A\FKH(\Omega),\Psi^\ast(\Omega)
        \rangle_{\Hcal_\sigma}
        =
        0 .
\end{equation}
% ---------------------------------------------------------------------
\subsection{Reduction of the Kutta--Fredholm consistency}
\label{subsec:KF-noncirc}
Relation \eqref{eq:compat-expanded} is an identity in \(A\) precisely
because of the augmented solvability Hypothesis~\ref{hyp:Fredholm-LD}(ii): the Green
pairing holds for the whole affine family, not only for the selected value of
\(A\).  It therefore determines the adjoint pairing of the regular data
a priori in terms of the edge concomitant, which removes the apparent
need to impose the consistency relation \eqref{eq:intro-KF-consistency} as a
separate hypothesis.
\begin{proposition}[Reduction of the Kutta--Fredholm hypotheses]
\label{prop:KF-noncirc}
Assume Hypothesis~\ref{hyp:Fredholm-LD} (both parts) and \(\kappa(\Omega)\neq0\).
Then, for every \(A\in\mathbb C\),
\begin{equation}
\label{eq:KF-identity}
        \langle \Finc(\Omega)+A\FKH(\Omega),\Psi^\ast(\Omega)\rangle_{\Hcal_\sigma}
        =
        -\kappa(\Omega)\,C_-(A).
\end{equation}
In particular, matching the affine coefficients in \(A\),
\begin{equation}
\label{eq:KF-coeff-match}
        \langle \Finc,\Psi^\ast\rangle_{\Hcal_\sigma}=-\kappa\,C_-^{(0)},
        \qquad
        \langle \FKH,\Psi^\ast\rangle_{\Hcal_\sigma}=-\kappa\,C_-^{(KH)} .
\end{equation}
Consequently:
\begin{enumerate}
\item the consistency relation \eqref{eq:intro-KF-consistency} holds with
\(\chi(\Omega)=-\kappa(\Omega)\neq0\);
\item the nondegeneracy \(\langle\FKH,\Psi^\ast\rangle\neq0\) holds if and only
if \(C_-^{(KH)}\neq0\);
\item the two scalar selection conditions coincide:
\(C_-(A)=0\Leftrightarrow\langle\Finc+A\FKH,\Psi^\ast\rangle=0\).
\end{enumerate}
Thus hypothesis {\rm(H4)} of \Cref{thm:intro-main} is not independent of the
others: given the Fredholm realization and the augmented solvability of
{\rm(H3)}, it reduces to the single nondegeneracy \(\kappa(\Omega)\neq0\)
together with \(C_-^{(KH)}\neq0\) from {\rm(H2)}.
\end{proposition}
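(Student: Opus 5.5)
The plan is to derive \eqref{eq:KF-identity} directly from the full compatibility relation \eqref{eq:full-compatibility}. By Hypothesis~\ref{hyp:Fredholm-LD}(ii), for each fixed \(A\in\mathbb C\) there is a field \(W_A\) in the augmented graph class with \(\LTD(\Omega)W_A=F_{\rm match}(A)\). The Green identity \eqref{eq:app-Green} holds for the pair \((W_A,\Psi^\ast)\) with a finite edge concomitant. I would apply that identity on the truncated domain \(\{Y>0\}\setminus B_\rho^+\), with the interaction lines treated through the multipliers \((b,\mu)\) of \eqref{eq:adjoint-interaction-main}, and then let \(\rho\downarrow0\) and \(|X|\to\infty\). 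Because \(\Psi^\ast\in\ker\LTD(\Omega)^\ast\), the bulk term \(\langle W_A,\LTD^\ast\Psi^\ast\rangle\) vanishes. The boundary terms at \(Y=0\) vanish by the transposed wall/wake conditions. The terms at \(X\to\pm\infty\) vanish by the dual weights: \(\Psi^\ast\) decays downstream faster than \(e^{-\vartheta X}\) and algebraically upstream, which compensates for the growth admitted in \(\Xcal_\sigma\).

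What survives is the pairing of the data against \(\Psi^\ast\). I would split this pairing using the direct sum of Lemma~\ref{lem:trace-exclusion}, writing \(F_{\rm match}(A)=C_-(A)\Fsing+\Finc+A\FKH\). The regular part gives \(\langle\Finc+A\FKH,\Psi^\ast\rangle_{\Hcal_\sigma}\). The singular part is, by definition of \(\Bedge\) (contour limit plus finite-part line integral), exactly \(\Bedge(C_-(A)r^{-1/2}\Vm,\Psi^\ast)\). This yields \eqref{eq:full-compatibility}, and linearity \eqref{eq:Bedge-Cminus} then gives \eqref{eq:compat-expanded}, that is, \eqref{eq:KF-identity}.

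The key point is that this holds for every \(A\), because augmented solvability is assumed uniformly in \(A\). Both sides of \eqref{eq:KF-identity} are affine in \(A\). The left side is affine by bilinearity. The right side is affine because \(C_-(A)=C_-^{(0)}+AC_-^{(KH)}\) by \eqref{eq:Cminus-affine}. An identity between affine functions on \(\mathbb C\) forces equality of the coefficients, which can be read off at \(A=0\) and from the difference at \(A=1\). This gives \eqref{eq:KF-coeff-match}.

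The three consequences then follow in order:
\begin{enumerate}
\item Item (1) is \eqref{eq:KF-identity} itself, with \(\chi=-\kappa\neq0\).
\item Item (2) follows from \(\langle\FKH,\Psi^\ast\rangle=-\kappa C_-^{(KH)}\) together with \(\kappa\neq0\).
\item Item (3) holds because \(-\kappa C_-(A)=0\) if and only if \(C_-(A)=0\), again since \(\kappa\neq0\).
\end{enumerate}
The closing remark about (H4) is just a restatement of items (1) and (2).

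The main obstacle is the justification of the Green identity with the singular datum. The field \(W_A\) carries an \(|X|^{-1/2}\) line trace, so the edge contour term and the line integral are individually divergent. The finite-part prescriptions must then be shown to combine so that no residual \(\rho\)-dependent term remains. I would handle this by taking the finiteness asserted in Hypothesis~\ref{hyp:Fredholm-LD}(ii) and the definition of \(\Bedge\) as given. I would only check that the divergent \(\rho^{-1/2}\) pieces from the contour and from the truncated line integral \(\int_{|X|>\rho}\) cancel, which is consistent with the parity of \(\Vm\) on \(\theta=\pm\pi\) in \eqref{eq:singular-trace-constants}. No further analysis should then be needed, since the remaining argument is purely algebraic.
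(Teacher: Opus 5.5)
Your proposal is correct and follows essentially the paper's own argument. Augmented solvability supplies $W_A$ for every $A$, and the Green identity against $\Psi^\ast\in\ker\LTD(\Omega)^\ast$ leaves only the regular pairing and the finite edge concomitant, giving \eqref{eq:full-compatibility} identically in $A$; affine coefficient matching and $\kappa\neq0$ then yield \eqref{eq:KF-coeff-match} and items (1)--(3). Your extra check that divergent $\rho^{-1/2}$ pieces cancel is not needed: exactly as in the paper, finiteness of the edge concomitant is taken from Hypothesis~\ref{hyp:Fredholm-LD}(ii).
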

\begin{proof}
By Hypothesis~\ref{hyp:Fredholm-LD}(ii), for each \(A\) there is \(W_A\) in the
augmented class with \(\LTD(\Omega)W_A=F_{\rm match}(A)\) and a valid Green
identity against \(\Psi^\ast\in\ker\LTD(\Omega)^\ast\).  Since
\(\LTD(\Omega)^\ast\Psi^\ast=0\) and the admissible line/decay terms vanish,
the only remaining boundary term is the finite edge concomitant, giving
\eqref{eq:full-compatibility} for that \(A\); as this holds for every
\(A\in\mathbb C\), substituting \eqref{eq:Bedge-Cminus} yields
\eqref{eq:compat-expanded} identically in \(A\), i.e.\
\eqref{eq:KF-identity}.  Because both sides of \eqref{eq:KF-identity} are
affine in \(A\) and \(C_-(A)=C_-^{(0)}+A C_-^{(KH)}\), matching coefficients
gives \eqref{eq:KF-coeff-match}.  Claims (i)--(iii) are immediate: (i) is
\eqref{eq:KF-identity}; (ii) follows from the second equation in
\eqref{eq:KF-coeff-match} and \(\kappa\neq0\); (iii) follows from
\eqref{eq:KF-identity} and \(\kappa\neq0\).
\end{proof}
\begin{remark}
\label{rem:KF-noncirc}
Proposition~\ref{prop:KF-noncirc} resolves a potential circularity: one need not posit
both the regular Fredholm solvability \eqref{eq:regular-Fredholm-condition}
and the consistency \eqref{eq:intro-KF-consistency} as separate scalar
constraints on the single amplitude \(A\).  The genuine analytic content is
concentrated in (a) the Fredholm realization with augmented solvability,
Hypothesis~\ref{hyp:Fredholm-LD} (limitation (i) of \Cref{sec:conclusion} for the true base flow; automatic in
the model of \Cref{sec:model}), and (b) the edge-concomitant nondegeneracy
\(\kappa\neq0\) of Hypothesis~\ref{hyp:edge-concomitant} (limitation (ii) of \Cref{sec:conclusion}; a theorem in
the model, Proposition~\ref{prop:model-H4}).  All algebraic results then follow.
\end{remark}
% ---------------------------------------------------------------------
\subsection{Viscous derivation of \texorpdfstring{\(C_-(A)=0\)}{C-(A)=0}}
\label{subsec:viscous-Kutta-derivation}
The bounded lower-deck class excludes the singular component:
\[
        W\in\Xcal_\sigma
        \quad\Longrightarrow\quad
        \LTD(\Omega)W\in\Hcal_\sigma
        \quad\Longrightarrow\quad
        \Pi_{\rm sing}\LTD(\Omega)W=0
\]
by Lemma~\ref{lem:trace-exclusion}.  Thus a bounded viscous--inviscid matching
solution requires
\[
\label{eq:Cminus-zero-necessary}
        \Pi_{\rm sing}F_{\rm match}(A)=0
        \quad\Longleftrightarrow\quad
        C_-(A)=0 .
\]
Once \(C_-(A)=0\), the remaining forcing lies in \(\Hcal_\sigma\), and
Fredholm solvability is exactly
$\displaystyle         \langle \Finc+A\FKH,\Psi^\ast\rangle_{\Hcal_\sigma}=0 $.
\begin{theorem}[Kutta selection as Fredholm compatibility]
\label{thm:conditional-viscous-Kutta}
Assume Hypotheses~\ref{hyp:Fredholm-LD} and~\ref{hyp:edge-concomitant} and
\(C_-^{(KH)}\neq0\).  Then bounded lower-deck matching to the outer family
\(\Phi_A^{\rm out}\) is possible only if
$        C_-(A)=0 $.
Equivalently,
$\displaystyle         A=A_{\rm Kutta}
        =
        -\frac{C_-^{(0)}}{C_-^{(KH)}}$ .
For this value of \(A\), the regular lower-deck problem is solvable iff
$\displaystyle         \big\langle
        \Finc(\Omega)+A_{\rm Kutta}\FKH(\Omega),
        \Psi^\ast(\Omega)
        \big\rangle_{\Hcal_\sigma}=0 $.
Hence the Kutta-selected and Fredholm-selected amplitudes coincide:
\begin{equation}
\label{eq:Arec-inner}
        \Arec(\Omega)
        =
        A_{\rm Kutta}
        =
        A_{\rm Fr}(\Omega)
        =
        -
        \frac{
        \langle \Finc(\Omega),\Psi^\ast(\Omega)\rangle_{\Hcal_\sigma}}
        {
        \langle \FKH(\Omega),\Psi^\ast(\Omega)\rangle_{\Hcal_\sigma}} .
\end{equation}
\end{theorem}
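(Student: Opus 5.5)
The proof assembles results already established rather than proving anything new. There are three steps: a necessity step from the trace exclusion, a Fredholm step from the closed-range characterization, and an identification step via Proposition~\ref{prop:KF-noncirc}.

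\emph{Necessity.} Suppose bounded lower-deck matching is possible, i.e.\ there is \(W\in\Xcal_\sigma\) with \(\LTD(\Omega)W=F_{\rm match}(A)\).
\begin{itemize}
\item Since \(\LTD(\Omega)\) maps \(\Xcal_\sigma\) into \(\Hcal_\sigma\), the data \(F_{\rm match}(A)\) must lie in \(\Hcal_\sigma\).
\item By \eqref{eq:forcing-affine}, \(F_{\rm match}(A)=C_-(A)\Fsing+(\Finc+A\FKH)\) with the second summand in \(\Hcal_\sigma\). Hence \(C_-(A)\Fsing\in\Hcal_\sigma\).
\item Lemma~\ref{lem:trace-exclusion} gives \(\Eedge\cap\Hcal_\sigma=\{0\}\) and \(\Fsing\neq0\). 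Therefore \(C_-(A)=0\), equivalently \(\Pi_{\rm sing}F_{\rm match}(A)=0\).
\item By \eqref{eq:Cminus-affine} and \(C_-^{(KH)}\neq0\), this affine equation has the unique root \(A_{\rm Kutta}=-C_-^{(0)}/C_-^{(KH)}\), as in Lemma~\ref{lem:kutta-unique}.
\end{itemize}

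\emph{Fredholm step.} At \(A=A_{\rm Kutta}\) the problem reduces to \eqref{eq:LTD-regular-forced}.
\begin{itemize}
\item Hypothesis~\ref{hyp:Fredholm-LD}(i) says \(\LTD(\Omega)\) is Fredholm, so its range is closed. The Fredholm alternative then gives \(\Ran\LTD(\Omega)=\{F:\langle F,\Psi^\ast\rangle=0\}\), using the one-dimensional cokernel \eqref{eq:adjoint-kernel-one}.
\item So solvability at \(A_{\rm Kutta}\) is equivalent to \eqref{eq:regular-Fredholm-condition} evaluated at \(A_{\rm Kutta}\).
\end{itemize}

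\emph{Identification.} Here Proposition~\ref{prop:KF-noncirc} is invoked. It uses Hypothesis~\ref{hyp:Fredholm-LD}(ii) and \(\kappa\neq0\) to give \eqref{eq:KF-identity} identically in \(A\), and hence \eqref{eq:KF-coeff-match}.
\begin{itemize}
\item \eqref{eq:KF-identity} immediately gives \(\langle\Finc+A_{\rm Kutta}\FKH,\Psi^\ast\rangle=-\kappa C_-(A_{\rm Kutta})=0\). So the Fredholm condition is automatically met and the regular problem is solvable.
\item Since \(C_-^{(KH)}\neq0\) and \(\kappa\neq0\), part (ii) of the proposition gives \(\langle\FKH,\Psi^\ast\rangle\neq0\). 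Therefore \(A_{\rm Fr}\) is well defined.
\item By part (iii), \(A_{\rm Fr}\) is the unique root of the same equation as \(A_{\rm Kutta}\), so the two coincide.
\item Alternatively, dividing the two relations in \eqref{eq:KF-coeff-match} gives the quotient directly: \(-\langle\Finc,\Psi^\ast\rangle/\langle\FKH,\Psi^\ast\rangle=-(-\kappa C_-^{(0)})/(-\kappa C_-^{(KH)})=-C_-^{(0)}/C_-^{(KH)}\). This yields \eqref{eq:Arec-inner}.
\end{itemize}

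\emph{Main obstacle.} The one delicate point is the logical status of the first step. The bounded class \(\Xcal_\sigma\) forces \(C_-(A)=0\), while the Green identity behind \eqref{eq:KF-identity} is applied to solutions \(W_A\) in the augmented class \(\Dmatch\). I would state explicitly that these are used for different purposes: the augmented solutions serve only to evaluate the affine functional \(A\mapsto\langle\Finc+A\FKH,\Psi^\ast\rangle\) for all \(A\). Boundedness then selects the single value at which that functional vanishes. With this separation no circularity arises, and every other step is algebra on affine functions of \(A\).
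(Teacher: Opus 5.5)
Your proposal is correct and follows the paper's own proof essentially step for step. Trace exclusion (Lemma~\ref{lem:trace-exclusion}) forces \(C_-(A)=0\), the Fredholm alternative characterizes \(\Ran\LTD(\Omega)\) through \(\Psi^\ast\), and Proposition~\ref{prop:KF-noncirc} supplies both \(\langle\FKH,\Psi^\ast\rangle=-\kappa C_-^{(KH)}\neq0\) and the coincidence of the two roots; your use of \eqref{eq:KF-identity} to show the Fredholm condition holds automatically at \(A_{\rm Kutta}\), and your separation of the augmented and bounded classes, make explicit what the paper's ``solving the scalar equation'' leaves implicit.
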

\begin{proof}
The matching datum has the decomposition
\[
        F_{\rm match}(A)=C_-(A)\Fsing+\Finc+A\FKH,
        \qquad
        \Fsing\in\Eedge,\quad \Finc+A\FKH\in\Hcal_\sigma .
\]
Since \(\Ran\LTD(\Omega)\subset\Hcal_\sigma\) and
\(\Eedge\cap\Hcal_\sigma=\{0\}\) (Lemma~\ref{lem:trace-exclusion}), bounded
matching implies \(C_-(A)=0\).  By
\(C_-(A)=C_-^{(0)}+AC_-^{(KH)}\) and \(C_-^{(KH)}\neq0\),
$\displaystyle         A=-C_-^{(0)}/C_-^{(KH)}$.
For this value the singular part vanishes; by Proposition~\ref{prop:KF-noncirc} the
nondegeneracy \(\langle\FKH,\Psi^\ast\rangle=-\kappa C_-^{(KH)}\neq0\) holds, and
the Fredholm alternative gives
\[
        \Finc+A\FKH\in\Ran\LTD(\Omega)
        \Longleftrightarrow
        \langle \Finc+A\FKH,\Psi^\ast\rangle_{\Hcal_\sigma}=0 .
\]
Solving the scalar equation gives \eqref{eq:Arec-inner}.  The concomitant
hypothesis gives the equivalent edge form
\[
        C_-(A)=0
        \Longleftrightarrow
        \Bedge(C_-(A)r^{-1/2}\Vm,\Psi^\ast)=0,
\]
because \(\kappa(\Omega)\neq0\).
\end{proof}
Combining the preceding identities,
$\displaystyle         C_-(A)=0$
iff
$\displaystyle         \Pi_{\rm sing}F_{\rm match}(A)=0$
iff
$\displaystyle      \Finc+A\FKH\in\Ran\LTD(\Omega)$
and, under Hypothesis~\ref{hyp:Fredholm-LD},
$\displaystyle         \Finc+A\FKH\in\Ran\LTD(\Omega)$ iff
$\displaystyle         \langle \Finc+A\FKH,\Psi^\ast\rangle_{\Hcal_\sigma}=0 $.
Thus the unsteady Kutta condition is the Fredholm compatibility condition of
the viscous lower deck, and the selected receptivity amplitude is the adjoint
quotient \eqref{eq:Arec-inner}.
% =====================================================================
\section{Exact verification in the linear-shear lower-deck model}
\label{sec:model}
This section verifies the inner hypotheses {\rm(H3)}--{\rm(H4)} of
\Cref{thm:intro-main} in closed form for the canonical linear-shear model of
the unsteady lower deck.  The model retains exactly the two features on which
the selection mechanism rests---the plate/wake switching of the boundary
condition at \(X=0\) and the pressure--displacement interaction---while
freezing the base flow at its uniform-shear profile.  It is the trailing-edge
analogue of Terent'ev's vibrating-ribbon problem~\cite{terent1981linear}, and the
same model underlies the classical lower-branch receptivity
analyses~\cite{goldstein1985scattering,ruban1984generation}.  The result of the section is the
following exact instance of Theorem~\ref{thm:intro-main}.
\begin{theorem}[Exact model selection mechanism]
\label{thm:model-anchor}
For the linear-shear lower-deck model \eqref{eq:model-base}, away from the
discrete resonance set \(\Sigma\) of Proposition~\ref{prop:model-H4}, the Kutta
amplitude selected by exclusion of the edge singularity coincides with the
adjoint Fredholm quotient and with the downstream pole residue:
$\displaystyle         \Arec^{\rm m}(\Omega)
        =
        -\frac{C_-^{(0)}}{C_-^{(KH)}}
        =
        -\frac{\langle \Finc,\Psi^\ast_{\rm m}\rangle_{\Hcal_\sigma}}
              {\langle \FKH,\Psi^\ast_{\rm m}\rangle_{\Hcal_\sigma}}
        =
        i\Res_{\alpha=\alpha_w}\Mcal^{\rm m}(\alpha;\Omega)$,
with 
$\displaystyle         \alpha_w=\Omega^{1/2}$.
Moreover the adjoint field is generated mode-wise by the Airy pair
\begin{equation}
\label{eq:model-anchor-adjoint}
        u^\ast(Y)=\Ai'\big(z(Y;\alpha)\big),
        \qquad
        q(Y)=\frac{c(\alpha)^2}{i\alpha}\,\Ai\big(z(Y;\alpha)\big),
\end{equation}
dual to the primal shear structure \(u_Y\propto\Ai(z)\).
\end{theorem}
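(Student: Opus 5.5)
The plan is to reduce Theorem~\ref{thm:model-anchor} to Theorem~\ref{thm:intro-main}. Because the model is explicit, each of (H3)--(H5) can be checked by Fourier transform in \(X\) followed by an Airy analysis in \(Y\). The linear-shear base \(U_0=\lambda_0Y\), \(V_0=0\) (normalised to \(\lambda_0=1\)) is \(X\)-independent. Hence the homogeneous system \eqref{eq:lin-LD} separates on each half-line, and after \(e^{i\alpha X}\) the shear \(\tau=u_Y\) satisfies \(\tau_{YY}-(i\alpha Y-i\Omega)\tau=0\). This gives \(\tau\propto\Ai(z)\) with \(z=(i\alpha)^{1/3}(Y-\Omega/\alpha)=c(\alpha)(Y-\Omega/\alpha)\).

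\emph{Step 1: the adjoint (the Airy pair).} Substitute the adjoint bulk equations \eqref{eq:formal-adjoint} with \(U_0=Y\), \(U_{0X}=0\), \(U_{0Y}=1\). The equation \(q_Y=u^\ast\) together with \(-i\Omega u^\ast-Yu^\ast_X-u^\ast_{YY}-q_X=0\) becomes, mode-wise, \(q_{YYY}=(i\alpha Y+i\Omega)q_Y+\dots\). Differentiating once shows that \(u^\ast\) satisfies the derivative of the Airy equation in \(z\). One then checks directly that \(u^\ast=\Ai'(z)\), \(q=c^2\Ai(z)/(i\alpha)\) solve it: the identity \(\Ai''=z\Ai\) gives \(q_Y=c^3\Ai'(z)/(i\alpha)=u^\ast\), since \(c^3=i\alpha\). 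Decay as \(Y\to\infty\) then selects \(\Ai\) over \(\Bi\), which yields \eqref{eq:model-anchor-adjoint}.

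\emph{Step 2: the Wiener--Hopf kernel and the Fredholm property.} Impose the plate/wake boundary conditions and the interaction law \(\widehat{\Kcal a}=|\alpha|\hat a\). Both the primal and the adjoint problems reduce to a scalar Wiener--Hopf equation \(K(\alpha)\Phi_+=\Phi_-+\text{data}\), with a kernel built from \(\Ai'(z_0)/\int_{z_0}^\infty\Ai\) and \(|\alpha|\), where \(z_0=-c\Omega/\alpha\). The zeros of \(K\) give the wake dispersion relation, and I expect the downstream root to be \(\alpha_w=\Omega^{1/2}\) (to be checked by the explicit leading balance). Factorize \(K=K_+K_-\) on a strip shifted by the weight \eqref{eq:weights-new}. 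The index of the factorisation, together with the polynomial ambiguity in the entire function (degree \(0\) versus the \(|X|^{-1/2}\) non-Kutta solution), gives:
\begin{itemize}
\item Fredholm index zero;
\item a one-dimensional kernel spanned by the wake mode;
\item a one-dimensional cokernel, with \(\Psi^\ast_{\rm m}\) obtained as the adjoint Wiener--Hopf solution built from Step~1;
\item augmented solvability for every \(A\), realised by the non-Kutta normalisation.
\end{itemize}
This verifies (H3).

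\emph{Step 3: the edge concomitant, and the main obstacle.} Evaluate \(\kappa(\Omega)=\Bedge(r^{-1/2}\Vm,\Psi^\ast_{\rm m})\) through its trace form: the \(\fp\) integral of \(g_K^\sharp b+g_\infty^\sharp\mu\), with \(b=\bar U^\ast_X\) and \(\mu=H[\bar U^\ast_{XX}]\). Pairing against \(|X|^{-1/2}\) is equivalent to extracting the leading large-\(|\alpha|\) coefficient of \(\widehat{\bar U^\ast}\). By the edge (Abelian) theorem this coefficient equals \(K_\pm\) evaluated at infinity times Airy integrals \(\int_{z_0}^\infty\Ai\), and so it is an explicit analytic function of \(\Omega\). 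This step is the hard one. The task is to show that this function is not identically zero, for example through its small-\(\Omega\) or large-\(\Omega\) asymptotics. Once that is done, analyticity makes its zero set \(\Sigma\) discrete, which is Proposition~\ref{prop:model-H4} and verifies (H4).

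\emph{Step 4: the residue.} Write the Kutta-normalised transform solution as \(\Mcal^{\rm m}=\Ncal/\Dcal\), where \(\Dcal\) vanishes simply at \(\alpha_w\). Simplicity follows from a nonzero derivative of the explicit kernel there, excluding at most further discrete \(\Omega\), which are absorbed into \(\Sigma\). Deforming the causal inversion contour into the lower half-plane, as in \Cref{subsec:flat-residue}, picks up \(i\Res\) with the \(\ell_{KH}\) normalisation, which verifies (H5). Invoking Theorem~\ref{thm:intro-main} together with Proposition~\ref{prop:KF-noncirc} then yields the three equal expressions for \(\Arec^{\rm m}\).
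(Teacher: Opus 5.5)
Your outline has the same architecture as the paper: Airy adjoint, then Wiener--Hopf/Fredholm reduction, then nondegeneracy of $\kappa$, then the general selection and residue theorems. However, two load-bearing steps are not supplied, and one of them is misidentified.

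\textbf{The wake pole.} You build the kernel from the plate impedance $\Ai'(z_0)/\kappa_1(z_0)$ and $|\alpha|$, and expect $\alpha_w=\Omega^{1/2}$ from a ``leading balance''. But the zeros of $Z_p-\gamma$ are the plate Tollmien--Schlichting modes, not the wake mode. What is missing is the wake-side $Y$-problem:
\begin{itemize}
\item Under the symmetric conditions $u_Y=v=0$ at $Y=0$, the decaying shear $f'=B\Ai(z)$ must vanish unless $\Ai(z_0)=0$.
\item So the wake mode is the shear-free slug $f\equiv\hat a$, $\hat v=-i\alpha\hat a Y$. In it the convective term cancels exactly against $\lambda_0\hat v$, and the momentum equation gives $\hat p/\hat a=\Omega/\alpha$.
\item With $\hat p=|\alpha|\hat a$ this gives the exact factor $D_w=\Omega-\alpha\gamma(\alpha)$. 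It enters the kernel alongside $D_p$ and $\Ai(z_0)$, as in \eqref{eq:model-kernel}.
\item Its root is $\alpha_w=\Omega^{1/2}$, with $\partial_\alpha D_w=-2\Omega^{1/2}\neq0$ for every $\Omega>0$. So no values of $\Omega$ need be excluded for simplicity, contrary to your Step~4.
\end{itemize}
This pole is real, so the residue pickup rests on the Briggs--Bers continuation $\Omega\mapsto\Omega+i\varsigma$, not on the strict inequality in (H1). You cannot cite Theorem~\ref{thm:intro-main} verbatim without saying this.

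\textbf{The edge concomitant (H4).} This is the more serious gap, and you leave it open. The direct route you sketch is not yet a computation. First, $\Bedge$ contains a global line pairing, which becomes a coefficient at infinity only after a contour-closing argument you have not set up. Second, at large $|\alpha|$ one has $z_0\to0$, so the Airy integral you invoke tends to the constant $\kappa_1(0)=1/3$ and carries no information about $\Omega$.

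The missing idea is to use augmented solvability first. Proposition~\ref{prop:KF-noncirc} gives $\langle\FKH,\Psi^\ast_{\rm m}\rangle=-\kappa\,C_-^{(KH)}$. So, given $C_-^{(KH)}\neq0$, it suffices to show that the wake-mode data are not orthogonal to the adjoint. Since $\FKH$ has downstream profile $\propto e^{i\alpha_wX}$, Parseval turns this pairing into an evaluation of the adjoint Wiener--Hopf transform at $\alpha_w$. That evaluation is the product of three factors:
\begin{itemize}
\item $1/K_-(\alpha_w)$;
\item $1/\partial_\alpha D_w(\alpha_w)$;
\item the recessive amplitude $\Ai'(z_0^w)$, with $z_0^w=\lambda_0^{-2/3}\Omega^{2/3}e^{-5i\pi/6}$.
\end{itemize}
All zeros of $\Ai'$ lie on the negative real axis, so the last factor never vanishes. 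Only the discrete contour and normalization degeneracies collected in $\Sigma$ remain.

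\textbf{Step~1.} The adjoint mode equation is third order, $g_{zzz}-zg_z-2g=0$, with basis $\Ai',\Bi',\Gi'$. Decay excludes $\Bi'$ but not the algebraically decaying Scorer pair $(\Gi',c^2\Gi/(i\alpha))$. So ``decay selects $\Ai$ over $\Bi$'' does not by itself single out the Airy pair.
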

\begin{proof}
Combine \Cref{thm:model-adjoint} (adjoint structure, giving
\eqref{eq:model-anchor-adjoint}), Proposition~\ref{prop:model-fredholm} (Wiener--Hopf
reduction, Fredholm structure, and augmented solvability),
Proposition~\ref{prop:model-H4} (nondegeneracy of the edge concomitant, defining
\(\Sigma\)), and the general
\Cref{thm:conditional-viscous-Kutta,thm:flat-pole-residue}; the details
occupy \Cref{subsec:model-primal,subsec:model-adjoint,subsec:model-kappa} and
are collected in Corollary~\ref{cor:model-main}.
\end{proof}
% ---------------------------------------------------------------------
\subsection{The model operator}
\label{subsec:model-setup}
Take, in \eqref{eq:lin-LD},
\begin{equation}
\label{eq:model-base}
        U_0=\lambda_0Y,\qquad V_0=0,\qquad \lambda_0>0,
\end{equation}
with the symmetric-component boundary conditions
\eqref{eq:LD-bc}, \eqref{eq:LD-bc-sym} and the flat-plate interaction law
\eqref{eq:LD-interaction}.  Write \(\LTD^{\rm m}(\Omega)\) for the resulting
operator on the weighted spaces of \Cref{subsec:weighted-Fredholm}.  The true
trailing-edge base state differs from \eqref{eq:model-base} by smooth \(\mathcal O(1)\)
terms (displacement, wake centerline acceleration).
Throughout, Fourier transforms are
\(\widehat h(\alpha)=\int h(X)e^{-i\alpha X}dX\), primal modes are
proportional to \(e^{i\alpha X}\), adjoint modes to \(e^{-i\alpha X}\)
(bilinear pairing), and
\begin{equation}
\label{eq:model-z}
        c(\alpha):=(i\alpha\lambda_0)^{1/3},
        \qquad
        z(Y;\alpha):=c(\alpha)Y+z_0(\alpha),
        \qquad
        z_0(\alpha):=-\,\frac{i\Omega}{c(\alpha)^2},
\end{equation}
with the principal branch of \((i\alpha\lambda_0)^{1/3}\) cut along
\(\alpha\in i[0,\infty)\), so that \(|\arg c|\le\pi/6\) for real \(\alpha\)
and \(\Ai(z)\to0\) as \(Y\to+\infty\).  Also set
$\displaystyle         \kappa_1(z_0):=\int_{z_0}^{\infty}\Ai(s)\,ds$ ,
and let \(\gamma(\alpha)\) be the analytic continuation of \(|\alpha|\) with
cuts on the imaginary axis, then \(\gamma(\alpha)=|\alpha|\) for
\(\alpha\in\mathbb R\).
% ---------------------------------------------------------------------
\subsection{Primal Airy structure, impedances, and the wake pole}
\label{subsec:model-primal}
For a primal mode \((u,v,p,a)=(f(Y),\hat v(Y),\hat p,\hat a)e^{i\alpha X}\),
elimination of \(\hat v\) and \(\hat p\) by cross-differentiation of
\eqref{eq:lin-LD} with \eqref{eq:model-base} gives
$\displaystyle         f'''=(i\alpha\lambda_0Y-i\Omega)f'
        =c^2\,z\,f' $,
so that \(f'\) satisfies the Airy equation in \(z\); the decaying branch is
\begin{equation}
\label{eq:model-fprime}
        f'(Y)=B\,\Ai(z),\qquad B\in\mathbb C .
\end{equation}
Two half-line impedances follow.
\emph{Plate} (\(u(0)=v(0)=0\)): integrating \eqref{eq:model-fprime} with
\(f(0)=0\) and evaluating the momentum equation at \(Y=0\),
\[
        i\alpha\hat p=f''(0)=Bc\Ai'(z_0),
        \qquad
        \hat a=\frac{B}{c}\,\kappa_1(z_0),
        \qquad
        Z_p(\alpha;\Omega):=\frac{\hat p}{\hat a}
        =
        \frac{c^2\,\Ai'(z_0)}{i\alpha\,\kappa_1(z_0)} ,
\]
the classical lower-branch impedance~\cite{terent1981linear,sychev1998asymptotic}.
Wake (symmetric: \(u_Y(0)=v(0)=0\)): decay of \(u_Y\) forces
\(B=0\) unless \(\Ai(z_0)=0\); the generic wake mode is therefore the
shear-free slug
\begin{equation}
\label{eq:model-slug}
        f\equiv\hat a,\qquad
        \hat v=-i\alpha\hat a\,Y,\qquad
        -i\Omega\hat a+i\alpha\hat p=0,
\end{equation}
the convective term \(i\alpha\lambda_0Y\hat a\) being cancelled exactly by
\(\lambda_0\hat v\).  Hence
\begin{equation}
\label{eq:model-Zw}
        Z_w(\alpha;\Omega):=\frac{\hat p}{\hat a}=\frac{\Omega}{\alpha}.
\end{equation}
Define the plate and wake dispersion functions
\[
\label{eq:model-Dp-Dw}
        D_p(\alpha;\Omega):=Z_p(\alpha;\Omega)-\gamma(\alpha),
        \qquad
        D_w(\alpha;\Omega):=\Omega-\alpha\gamma(\alpha).
\]
Zeros of \(D_p\) are the lower-branch Tollmien--Schlichting modes of the
semi-infinite plate; zeros of \(D_w\) are the wake modes of the model.  For
\(\Omega>0\),
\begin{equation}
\label{eq:model-wakepole}
        D_w(\alpha_w;\Omega)=0,\qquad
        \alpha_w=\Omega^{1/2}>0,\qquad
        \partial_\alpha D_w(\alpha_w;\Omega)=-2\Omega^{1/2}\neq0 :
\end{equation}
a simple, neutral wake pole.  Its causal classification follows
Briggs--Bers: continuing \(\Omega\mapsto\Omega+i\varsigma\),
\(\varsigma>0\), gives \(\alpha_w=(\Omega+i\varsigma)^{1/2}\) with
\(\Im\alpha_w>0\), so the pole descends onto the real axis from above as
\(\varsigma\downarrow0\) and belongs to the downstream set
\(\mathcal P_{\rm down}\) of \Cref{subsec:flat-residue}.  The symmetric model
wake mode is thus the neutral limiting case of the convectively unstable
situation \(\Im\alpha_{KH}<0\) of Assumption~\ref{ass:simple-KH}; 
see Remark~\ref{rem:model-antisym} for the antisymmetric (flapping) component.

Introduce the half-line unknowns
$\displaystyle         \tau(X):=u_Y(X,0)\,\mathbf 1_{X<0}$,
and
$\displaystyle         U_c(X):=u(X,0)\,\mathbf 1_{X>0}$,
whose transforms \(\widehat\tau_-\), \(\widehat U_{c+}\) are analytic in the
upper and lower half-planes respectively. 
Solving the \(Y\)-problem for arbitrary
\((\widehat\tau,\widehat U_c)\) and eliminating \((\hat p,\hat a)\) with the
interaction law \(\hat p=\gamma(\alpha)\hat a+\widehat f_K\) gives the scalar
Wiener--Hopf equation
\begin{equation}
\label{eq:model-WH}
        D_w(\alpha;\Omega)\,\widehat U_{c+}(\alpha)
        =
        -\,\frac{\alpha\,\kappa_1(z_0)}{c\,\Ai(z_0)}\,
        D_p(\alpha;\Omega)\,\widehat\tau_-(\alpha)
        +
        \alpha\,\widehat f(\alpha),
\end{equation}
where \(\widehat f\) collects the transformed matching data.  The kernel is
\begin{equation}
\label{eq:model-kernel}
        K(\alpha;\Omega)
        =
        -\,\frac{\alpha\,\kappa_1(z_0(\alpha))\,D_p(\alpha;\Omega)}
                {c(\alpha)\,\Ai(z_0(\alpha))\,D_w(\alpha;\Omega)} ,
\end{equation}
meromorphic off the imaginary-axis cuts, with large-\(\alpha\) behavior
\(K=\mathcal O(\alpha^{-1/3})\) determined by \(z_0\to0\), \(D_p\sim-\gamma\),
\(D_w\sim-\alpha\gamma\); the canonical factorization \(K=K_+K_-\) with
zero-free factors exists on any horizontal contour avoiding the zeros of
\(D_p\), \(D_w\), \(\Ai(z_0)\), and \(\kappa_1(z_0)\), with index tracking
fixed by the winding number of \(K\) along the weighted contour
(cf.~\cite{noble1962methods}).  The Kutta-normalized response (minimal edge growth;
\Cref{subsec:WH-representation}) is the meromorphic function
$\displaystyle         \Mcal^{\rm m}(\alpha;\Omega)
        =
        \frac{\Ncal^{\rm m}(\alpha;\Omega)}{D_w(\alpha;\Omega)}$,
with \(\Ncal^{\rm m}\) explicit in terms of \(K_\pm\) and the data, and the
selected wake amplitude of \Cref{thm:flat-pole-residue} is, by
\eqref{eq:model-wakepole},
\begin{equation}
\label{eq:model-residue}
        \Arec^{\rm m}(\Omega)
        =
        i\Res_{\alpha=\alpha_w}\Mcal^{\rm m}
        =
        -\,\frac{i\,\Ncal^{\rm m}(\Omega^{1/2};\Omega)}{2\,\Omega^{1/2}} .
\end{equation}
% ---------------------------------------------------------------------
\subsection{The Airy adjoint}
\label{subsec:model-adjoint}
The central computation of this section is that the adjoint system
\eqref{eq:formal-adjoint} is also exactly solvable, with a basis dual to the
primal Airy structure.
\begin{theorem}[Airy structure of the adjoint]
\label{thm:model-adjoint}
For \(U_0=\lambda_0Y\), \(V_0=0\), an adjoint mode
\((u^\ast,q)=(g(Y),\hat q(Y))e^{-i\alpha X}\) of \eqref{eq:formal-adjoint}
satisfies the reduced third-order equation
\begin{equation}
\label{eq:model-adjoint-ode}
        g'''-(i\alpha\lambda_0Y-i\Omega)\,g'-2i\alpha\lambda_0\,g=0,
        \qquad\text{i.e.}\qquad
        g_{zzz}-z\,g_z-2g=0
\end{equation}
in the variable \(z\) of \eqref{eq:model-z}, and the adjoint pressure is
recovered algebraically as
\begin{equation}
\label{eq:model-qhat}
        \hat q
        =
        \frac{c^2}{i\alpha}\,h(z),
        \qquad
        h:=g_{zz}-z\,g,\qquad h_z=g .
\end{equation}
A fundamental system of \eqref{eq:model-adjoint-ode} is
\begin{equation}
\label{eq:model-adjoint-basis}
        g\in\spn\{\Ai'(z),\ \Bi'(z),\ \Gi'(z)\},
\end{equation}
with companions \(h\in\spn\{\Ai(z), \Bi(z), \Gi(z)\}\) respectively, where
\(\Gi\) is the Scorer function, \(\Gi''(z)-z\Gi(z)=-1/\pi\)
\cite{apostol2010nist}.  The solutions admissible as \(Y\to+\infty\) are spanned by
the recessive pair and the algebraically decaying Scorer pair,
\begin{equation}
\label{eq:model-adjoint-admissible}
        (g,\hat q)\in
        \spn\Big\{
        \big(\Ai'(z),\,\tfrac{c^2}{i\alpha}\Ai(z)\big),\
        \big(\Gi'(z),\,\tfrac{c^2}{i\alpha}\Gi(z)\big)
        \Big\},
\end{equation}
and the adjoint line states are finite and explicit; in particular, for the
recessive component,
\begin{equation}
\label{eq:model-bbar}
        \bar U^\ast(X)
        =
        \int_0^\infty u^\ast\,dY
        =
        -\,\frac{\Ai(z_0)}{c}\,e^{-i\alpha X},
        \qquad
        \widehat b\propto i\alpha\,\frac{\Ai(z_0)}{c} .
\end{equation}
\end{theorem}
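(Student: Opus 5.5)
The plan is a direct mode-wise substitution into the formal adjoint system \eqref{eq:formal-adjoint}, followed by a reduction of the resulting third-order ODE to the Airy and Scorer inhomogeneous equation. With \(U_0=\lambda_0Y\) and \(V_0=0\) we have \(U_{0X}=0\) and \(U_{0Y}=\lambda_0\).

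\emph{Step 1: the reduced ODE.} Inserting \((u^\ast,q)=(g,\hat q)e^{-i\alpha X}\), so that \(\partial_X\mapsto -i\alpha\), gives two equations:
\[
-i\Omega g+i\alpha\lambda_0Yg-g''+i\alpha\hat q=0,\qquad \hat q'=\lambda_0 g .
\]
Differentiating the first in \(Y\) and eliminating \(\hat q'\) with the second produces \eqref{eq:model-adjoint-ode}: the \(i\alpha\lambda_0 g\) coming from differentiating the convective term adds to the one coming from \(\hat q'\). To change variables I use \(d/dY=c\,d/dz\) and \(c^3=i\alpha\lambda_0\). With \(z_0=-i\Omega/c^2\) this gives \(c^2z=i\alpha\lambda_0Y-i\Omega\), and the equation becomes \(c^3(g_{zzz}-zg_z-2g)=0\).

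\emph{Step 2: recovering \(\hat q\).} The first equation is already algebraic in \(\hat q\): \(i\alpha\hat q=g''-c^2zg=c^2(g_{zz}-zg)\). This is \eqref{eq:model-qhat}. For consistency I check that \(h_z=g_{zzz}-g-zg_z=g\) by the ODE. Then \(\hat q'=c\cdot c^2h_z/(i\alpha)=\lambda_0g\), so the second adjoint equation holds automatically.

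\emph{Step 3: the fundamental system.} The key observation is that if \(w''-zw=k\) with \(k\) constant, then \(g=w'\) solves the ODE. Indeed \(w'''=w+zw'\) and \(w''''=2w'+zw''\), so
\[
g_{zzz}-zg_z-2g=w''''-zw''-2w'=0 .
\]
Moreover \(h=w'''-zw'=w\), which gives the companions. Taking \(w=\Ai,\Bi\) (\(k=0\)) and \(w=\Gi\) (\(k=-1/\pi\)) yields three solutions. They are independent because their companions \(h=\Ai,\Bi,\Gi\) are independent: the \(\Gi\) companion is inhomogeneous and \(\Ai,\Bi\) have nonzero Wronskian. This establishes \eqref{eq:model-adjoint-basis}.

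\emph{Step 4: admissibility.} As \(Y\to+\infty\), \(z\to\infty\) along the ray \(\arg z=\arg c\), with \(|\arg c|\le\pi/6\). On this ray \(\Ai,\Ai'\) decay exponentially and \(\Bi,\Bi'\) grow exponentially. Using the standard asymptotics \(\Gi(z)\sim 1/(\pi z)\) and \(\Gi'(z)\sim-1/(\pi z^2)\), valid for \(|\arg z|<\pi/3\), the Scorer pair decays algebraically. This gives \eqref{eq:model-adjoint-admissible}.

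I expect this to be the main obstacle. One must confirm that the chosen principal branch keeps \(z\) inside the Scorer sector uniformly on the contour, including near the cut. One must also decide whether the \(\Gi\) component is merely admissible or has to be discarded, for instance by the integrability of \(\bar U^\ast\): since \(\Gi'\sim z^{-2}\) is integrable in \(Y\), it appears to be admissible.

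\emph{Step 5: the line state.} For the recessive component,
\[
\int_0^\infty\Ai'(cY+z_0)\,dY=\frac{1}{c}\big[\Ai\big]_{z_0}^{\infty}=-\frac{\Ai(z_0)}{c},
\]
which gives \eqref{eq:model-bbar} for \(\bar U^\ast\). Then \(b=\bar U^\ast_X\) from \eqref{eq:adjoint-interaction-main} brings down the factor \(-i\alpha\), so \(\widehat b\propto i\alpha\Ai(z_0)/c\). Finiteness of \(\mu=\Kcal[\bar U^\ast_X]\) mode-wise follows from \(\widehat{\Kcal}=\gamma(\alpha)\).
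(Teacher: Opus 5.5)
Your proposal is correct and follows the paper's proof essentially step for step: mode substitution, differentiation with elimination of $\hat q'$ to obtain the third-order ODE, the algebraic recovery $i\alpha\hat q=c^2(g_{zz}-zg)$ with $h_z=g$, the Airy/Scorer basis, admissibility from the asymptotics in the sector $|\arg z|<\pi/3$, and the explicit integral $\int_0^\infty\Ai'(z)\,dY=-\Ai(z_0)/c$. Your one organizational difference is to verify all three basis elements at once by showing that $g=w'$ solves the ODE with companion $h=w$ whenever $w''-zw$ is constant, and to prove independence through the companions; the paper instead checks $\Ai'$ directly, says the other cases are analogous, and leaves independence implicit.
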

\begin{proof}
With \eqref{eq:model-base} the adjoint system \eqref{eq:formal-adjoint} reads
\(-i\Omega u^\ast-\lambda_0Yu^\ast_X-u^\ast_{YY}-q_X=0\),
\(q_Y=\lambda_0u^\ast\).  Inserting \((g,\hat q)e^{-i\alpha X}\),
\[
        -i\Omega g+i\alpha\lambda_0Yg-g''+i\alpha\hat q=0,
        \qquad
        \hat q'=\lambda_0 g .
\]
Solving the first relation for \(i\alpha\hat q=g''+i\Omega g-i\alpha\lambda_0Yg\)
and differentiating, the second relation gives
\(i\alpha\lambda_0g=g'''+i\Omega g'-i\alpha\lambda_0g-i\alpha\lambda_0Yg'\),
which is \eqref{eq:model-adjoint-ode}; passing to \(z\)-units uses
\(i\alpha\lambda_0Y-i\Omega=c^2z\) and \(i\alpha\lambda_0=c^3\).  For
\eqref{eq:model-qhat}, note
\(i\alpha\hat q=c^2g_{zz}+i\Omega g-(c^2z+i\Omega)g=c^2(g_{zz}-zg)=c^2h\),
and \(h_z=g_{zzz}-g-zg_z=(g_{zzz}-zg_z-2g)+g=g\) by
\eqref{eq:model-adjoint-ode}.  For the basis: if \(g=\Ai'(z)\) then
\(g_z=z\Ai\), \(g_{zz}=\Ai+z\Ai'\), \(g_{zzz}=2\Ai'+z^2\Ai\), and
\(g_{zzz}-zg_z-2g=2\Ai'+z^2\Ai-z^2\Ai-2\Ai'=0\); the same computation holds
verbatim for \(\Bi'\), and for \(\Gi'\) using
\(\Gi''=z\Gi-1/\pi\), the inhomogeneous term cancelling in the combination.
The companions follow from
\(h=g_{zz}-zg\): for \(g=\Ai'\),
\(h=\Ai'''-z\Ai'=(z\Ai)'-z\Ai'=\Ai\), and analogously for the other pairs.
Admissibility: \(\Ai'(z)\) is recessive,
\(\Gi'(z)=\mathcal O(z^{-2})\) and \(\Gi(z)=\mathcal O(z^{-1})\) as \(z\to\infty\) in
\(|\arg z|<\pi/3\)~\cite{apostol2010nist}, so both \((g,\hat q)\) pairs decay, while
\(\Bi'\) grows exponentially and is excluded.  Finally
\(\int_0^\infty\Ai'(z)\,dY=c^{-1}[\Ai(z)]_{z_0}^{\infty}=-c^{-1}\Ai(z_0)\),
giving \eqref{eq:model-bbar} via \(b=\bar U^\ast_X\) from
\eqref{eq:adjoint-interaction-main}.
\end{proof}
\begin{remark}[Primal--adjoint Airy duality]
\label{rem:airy-duality}
The primal solution carries its Airy structure in the shear,
\(u_Y\propto\Ai(z)\), with velocity \(u\) an Airy integral; the adjoint
carries it in the velocity, \(u^\ast\propto\Ai'(z)\), with adjoint
pressure \(q\propto\Ai(z)\).  This is the lower-deck realization of the
familiar duality between direct and adjoint Orr--Sommerfeld structures in
receptivity theory~\cite{goldstein1989boundary}, here in closed form.
\end{remark}
The adjoint half-plane problems mirror the primal ones: on the plate side the
condition \(u^\ast(X,0)=0\) and on the wake side \(u^\ast_Y(X,0)=0\) select
one-parameter combinations of the admissible pair
\eqref{eq:model-adjoint-admissible} for each \(\alpha\), and the homogeneous
adjoint problem reduces, by the same elimination that led to
\eqref{eq:model-WH}, to a scalar Wiener--Hopf problem.
\begin{proposition}[Wiener--Hopf reduction of the model Fredholm problem]
\label{prop:model-fredholm}
Let \(\Sigma_0(\Omega)\) denote the (closed, discrete in \(\Omega\)) set of
real-contour degeneracies, i.e.\ those \(\Omega>0\) for which \(D_p(\cdot;\Omega)\),
\(\Ai(z_0(\cdot))\), or \(\kappa_1(z_0(\cdot))\) vanishes on the weighted
inversion contours of \Cref{subsec:weighted-Fredholm}.  For
\(\Omega\notin\Sigma_0\) and weights \eqref{eq:weights-new} with
\(\vartheta>0\) sufficiently small:
\begin{enumerate}
\item the frozen (translation-invariant) plate and wake limit operators of
\(\LTD^{\rm m}(\Omega)\) are invertible on their weighted lines, their
symbols being governed by \(D_p\) and \(D_w\) respectively; the adjoint frozen
symbols are the transposes and have the same determinants;
\item if the kernel \eqref{eq:model-kernel} admits a canonical
factorization \(K=K_+K_-\) with zero index along the weighted contour---on
which the wake pole \(\alpha_w\) lies above the downstream line
\(\Im\alpha=-\vartheta\)---then
\(\LTD^{\rm m}(\Omega):\Xcal_\sigma\to\Hcal_\sigma\) is Fredholm with
\(\ind\LTD^{\rm m}(\Omega)=0\) and
$\displaystyle         \dim\ker\LTD^{\rm m}(\Omega)=\dim\ker\LTD^{\rm m}(\Omega)^\ast=1$ ;
the kernel is generated by the inner wake mode
\(W_w=(e^{i\alpha_wX}\chi_w,\ldots)\) built from the slug
\eqref{eq:model-slug}, and the cokernel by the adjoint Wiener--Hopf state
\(\Psi^\ast_{\rm m}(\Omega)\) assembled from
\eqref{eq:model-adjoint-admissible};
\item augmented solvability (Hypothesis~\ref{hyp:Fredholm-LD}(ii)) holds: for every
\(A\), the Wiener--Hopf construction with the non-Kutta entire-function
normalization produces a solution of
\(\LTD^{\rm m}(\Omega)W_A=F_{\rm match}(A)\) whose edge behavior realizes the
\(|X|^{-1/2}\) line trace \eqref{eq:inner-singular-trace}, and the Green
identity \eqref{eq:app-Green} holds with finite edge concomitant.
\end{enumerate}
\end{proposition}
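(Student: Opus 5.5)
The proof splits along the three items and rests on one organizing fact: $\LTD^{\rm m}(\Omega)$ is a transmission problem between two translation-invariant half-problems, plate for $X<0$ and wake for $X>0$, glued at $X=0$. Its Fredholm theory is therefore the Fredholm theory of the scalar Wiener--Hopf operator with symbol $K$ in \eqref{eq:model-kernel}, posed on the weighted contour. The steps are: (i) invert the frozen limit operators by partial Fourier transform in $X$; (ii) reduce the glued problem to the Wiener--Hopf equation \eqref{eq:model-WH} for $(\widehat\tau_-,\widehat U_{c+})$ and read off kernel, cokernel and index from the factorization; (iii) build the augmented solutions explicitly.

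\textbf{Item (i).} Fix $\alpha$ on the relevant contour: $\Im\alpha=0$ upstream, where the algebraic weight does not shift the contour, and $\Im\alpha=-\vartheta$ downstream. The $Y$-problem is a third-order ODE solved by \eqref{eq:model-fprime}. For the plate, the map $(\hat p,\hat a)$ reduces to the scalar equation $D_p(\alpha)\hat a=$ data. Since $\Omega\notin\Sigma_0$, the quantities $D_p$, $\Ai(z_0)$ and $\kappa_1(z_0)$ are nonzero on the contour, and their large-$\alpha$ asymptotics ($D_p\sim-\gamma$) give uniform symbol bounds. Plancherel in the weighted $L^2$ then gives invertibility from $\Xcal_\sigma$ to $\Hcal_\sigma$ with the anisotropic norms; the $Y$-regularity follows from Airy resolvent estimates, uniform in $\alpha$ after rescaling by $c(\alpha)$. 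The wake case works the same way with symbol $D_w$. Here I must check that $D_w=\Omega-\alpha\gamma(\alpha)$ has no zero on $\Im\alpha=-\vartheta$: its only zeros near the real axis are $\pm\alpha_w$, which are real, so any $\vartheta>0$ smaller than the distance to the other zeros works. For the adjoint, transposing the $2\times2$ symbol block leaves the determinant unchanged; Theorem~\ref{thm:model-adjoint} supplies the admissible adjoint basis \eqref{eq:model-adjoint-admissible}, so the adjoint determinants are $D_p$ and $D_w$ up to nonvanishing Airy factors.

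\textbf{Item (ii).} By item (i), the glued operator is Fredholm if and only if the Wiener--Hopf operator with symbol $K$ is Fredholm on the strip between the contours. Its index is minus the winding number of $K$. Under the assumed canonical factorization with zero index along the weighted contour, the index is $0$, and this is the index of $\LTD^{\rm m}$. For the kernel, the pole $\alpha_w$ lies in the strip $-\vartheta<\Im\alpha\le0$ and $D_w$ sits in the denominator of $K$. The homogeneous equation $D_w\widehat U_{c+}=-(\ldots)D_p\widehat\tau_-$ therefore admits the solution $\widehat U_{c+}=K_+^{-1}E/(\alpha-\alpha_w)$ with $E$ a constant. This solution is admissible in $\Xcal_\sigma$ because $e^{i\alpha_wX}\in L^2(e^{-\vartheta X})$, and by Liouville, using the growth bound $K=\mathcal O(\alpha^{-1/3})$ and the edge-regularity class (which excludes $\alpha^{1/2}$ growth), $E$ is the only freedom. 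This gives $\dim\ker=1$, generated by the slug mode $W_w$, and index $0$ then gives $\dim\ker^\ast=1$. I will construct $\Psi^\ast_{\rm m}$ explicitly as the dual Wiener--Hopf state: plate-side $\Ai'$ and $\Gi'$ combinations vanishing at $Y=0$, wake-side combinations with $g_Y(0)=0$, glued through $K_\pm^{-1}$.

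\textbf{Item (iii) and the main obstacle.} For augmented solvability, relax Liouville to allow entire functions of the next order. This produces the non-Kutta solution, whose edge behaviour is $\widehat\tau_-\sim\alpha^{-1/2}$, and by the Abelian theorem this gives the $|X|^{-1/2}$ line trace \eqref{eq:inner-singular-trace}. The Green identity \eqref{eq:app-Green} then follows by excising $B_\rho$, where the edge terms are finite because $r^{-1/2}\cdot r^{1/2}$-type products are bounded against $\Psi^\ast$. I expect the main obstacle to be precisely this finiteness: the adjoint edge exponent must be matched to the primal $-1/2$ exponent so that the concomitant has a finite part. I would first read the adjoint exponent off $K_\pm$ at infinity, using the $\mathcal O(\alpha^{-1/3})$ decay, and then check the product of the two exponents.
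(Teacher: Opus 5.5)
Your outline is the paper's: Fourier diagonalization of the frozen plate and wake problems with determinants $D_p$ and $D_w$, reduction to the scalar Wiener--Hopf equation \eqref{eq:model-WH}, index and defects read from the factorization of $K$, and one extra degree in the entire function for (iii). The paper gets Fredholmness from the limit-operator criterion and uses Wiener--Hopf only for the count; that difference is cosmetic.

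\textbf{The count in (ii) fails.} You take $\LTD^{\rm m}(\Omega)$ to be equivalent to the scalar Wiener--Hopf operator with symbol $K$ on the weighted line. A canonical zero-index factorization $K=K_+K_-$ then makes that operator \emph{invertible}. This is Gohberg--Krein; equivalently, by Coburn's lemma a scalar Wiener--Hopf operator cannot have nontrivial kernel and cokernel at once.

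Concretely, the homogeneous equation gives $\widehat U_{c+}/K_+=K_-\widehat\tau_-=E$ with $E$ entire. Hence $\widehat U_{c+}=K_+E$ and $\widehat\tau_-=E/K_-$. The wake pole lies above the line, so it is already contained in the continuation of $K_+$. In the class where the factorization is canonical, Liouville then forces $E\equiv0$.

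Your formula $\widehat U_{c+}=K_+^{-1}E/(\alpha-\alpha_w)$ does not come from this factorization. It is what you get by factorizing $(\alpha-\alpha_w)K$ (normalized at infinity) and holding the pole outside. The winding numbers of $K$ and $(\alpha-\alpha_w)K$ along the line differ by one, so this is a different factorization problem. You therefore face a dichotomy:
\begin{itemize}
\item If $K$ is canonical, the kernel and cokernel are trivial, and there is no $\Psi^\ast_{\rm m}$ to build.
\item If the pole is kept outside the factorization, the index is $\pm1$, and only one of $\ker\LTD^{\rm m}(\Omega)$, $\ker\LTD^{\rm m}(\Omega)^\ast$ is one-dimensional.
\end{itemize}
So ``index $0$ plus an explicit kernel element, hence $\dim\ker^\ast=1$'' cannot be extracted from a scalar reduction. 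You would need an additional mechanism, such as a finite-rank coupling not encoded in $K$. The paper's proof asserts the same count via ``the standard argument-principle count'' without supplying such a mechanism, so this is not an idea you missed there. Still, the step does not go through as written, and your dual Wiener--Hopf construction of $\Psi^\ast_{\rm m}$ fails with it.

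\textbf{In (iii) the exponent is attached to the wrong unknown.} $\Fsing$ has zero bulk components. Its $|X|^{-1/2}$ profiles sit in the matching and pressure--displacement traces $(g_\infty^\sharp,g_K^\sharp)$. The paper accordingly states the non-Kutta normalization through the displacement: $X^{1/2}$ behaviour, with transform $O(|\alpha|^{-3/2})$.

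From the $Y$-problem, $\hat a=\widehat U_{c+}+\kappa_1(z_0)\widehat\tau_-/(c\,\Ai(z_0))$ with $c\sim\alpha^{1/3}$. So $\widehat\tau_-\sim\alpha^{-1/2}$ on its own gives $\hat a\sim\alpha^{-5/6}$ and $\hat p=|\alpha|\hat a\sim\alpha^{1/6}$, not the $|X|^{-1/2}$ datum. The Abelian step has to be done on $\hat a$ and $\hat p$ after tracking exponents through the $Y$-problem. Those same exponents decide the finiteness of the concomitant that you identify as the main obstacle.

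\textbf{A minor slip in (i).} $-\alpha_w$ is not a zero of $D_w$: $\gamma(-\alpha_w)=\alpha_w$ gives $D_w(-\alpha_w)=2\Omega$. The smallness of $\vartheta$ is needed where the weighted line crosses the cut of $\gamma$ at $-i\vartheta$. There the symbol jumps, and its left limit $\Omega-\vartheta^2$ must not vanish.
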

\begin{proof}
(i) is the explicit computation of
\Cref{subsec:model-primal,subsec:model-adjoint}: the frozen plate (resp.\
wake) problem is diagonalized by the Fourier transform with symbol determinant
proportional to \(D_p\) (resp.\ \(D_w\)) after removal of the nonvanishing
factors \(c\,\Ai(z_0)\), \(\kappa_1(z_0)\); transposition does not change
determinants.  For (ii), the operator differs from a direct sum of its frozen
limits by an interval-supported switching term, so the limit-operator
criterion~\cite{rabinovitch2004limit} reduces to the
invertibility in (i); given the assumed zero-index factorization, the index
and the defect dimensions are read off from the standard Wiener--Hopf
argument-principle count~\cite{noble1962methods}, the downstream weight
\(\vartheta>0\) placing \(\alpha_w\) on the kernel side of the contour,
exactly as in Remark~\ref{rem:kernel-interpretation}.  For (iii), the entire function \(P(\alpha)\)
of \Cref{subsec:WH-representation} may be chosen with one extra polynomial
degree; the corresponding solution has the
\(\mathcal (|\alpha|^{-3/2})\) edge decay of
\eqref{eq:Kutta-normalization-transform}, i.e.\ the \(X^{1/2}\) displacement
and \(|X|^{-1/2}\) trace behavior of \(\Eedge\)
\cite{orszag1970instability,crighton1985kutta}; the concomitant integrals converge by the
explicit local exponents.
\end{proof}
The factorization condition in (ii) is verified along the real contour for
all \(\Omega\notin\Sigma_0\), since the kernel \eqref{eq:model-kernel} is
then zero-free there and \(K=\mathcal O(\alpha^{-1/3})\) is compensated by the
standard algebraic prefactor; the excluded set is absorbed into the discrete
set \(\Sigma\) of Proposition~\ref{prop:model-H4}.
% ---------------------------------------------------------------------
\subsection{Nondegeneracy of the edge concomitant}
\label{subsec:model-kappa}
With augmented solvability available, Proposition~\ref{prop:KF-noncirc} applies
unconditionally in the model, and the identity
\(\langle\FKH,\Psi^\ast_{\rm m}\rangle=-\kappa(\Omega)\,C_-^{(KH)}\) converts
the nondegeneracy of \(\kappa\) into the computable statement that the
wake-mode data are not orthogonal to the adjoint state.  The latter pairing is
an evaluation at the wake pole and is explicit.
\begin{proposition}[Model nondegeneracy: {\rm(H4)} holds off a discrete set]
\label{prop:model-H4}
Let \(z_0^w:=z_0(\alpha_w)\).  Then
\begin{equation}
\label{eq:model-z0w}
        z_0^w
        =
        \lambda_0^{-2/3}\,\Omega^{2/3}\,e^{-5i\pi/6},
\end{equation}
so that as \(\Omega\) ranges over \((0,\infty)\) the point \(z_0^w\) traverses
the fixed ray \(\arg z=-5\pi/6\).  The wake-mode pairing evaluates, by
Parseval, at \(\alpha=\alpha_w\):
\begin{equation}
\label{eq:model-pairing}
        \langle\FKH,\Psi^\ast_{\rm m}(\Omega)\rangle_{\Hcal_\sigma}
        =
        \frac{\mathcal C(\Omega)}
             {K_-(\alpha_w;\Omega)\,\partial_\alpha D_w(\alpha_w;\Omega)}\,
        \Ai'\!\big(z_0^w\big),
\end{equation}
where \(\mathcal C(\Omega)\) is a finite product of the normalization
constants of \(\ell_{KH}\), \(\Min\), and the factorization, nonvanishing by
construction.  Consequently
\begin{equation}
\label{eq:model-Sigma}
        \kappa(\Omega)
        =
        -\,\frac{\langle\FKH,\Psi^\ast_{\rm m}\rangle}{C_-^{(KH)}}
        \neq0
        \qquad\text{for all }\Omega\in(0,\infty)\setminus\Sigma,
\end{equation}
where \(\Sigma\supset\Sigma_0\) is discrete.  In particular, under the
factorization condition of Proposition~\ref{prop:model-fredholm}, {\rm(H3)}--{\rm(H4)}
of \Cref{thm:intro-main} hold for the model for all
\(\Omega\in(0,\infty)\setminus\Sigma\).
\end{proposition}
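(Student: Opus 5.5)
The plan has three parts. First, compute $z_0^w$ directly from \eqref{eq:model-z}. Second, evaluate the pairing $\langle\FKH,\Psi^\ast_{\rm m}\rangle$ in Fourier variables, where the wake-mode data collapse onto the pole $\alpha_w$. Third, transfer nonvanishing to $\kappa$ through the coefficient identity \eqref{eq:KF-coeff-match} of Proposition~\ref{prop:KF-noncirc}, which applies in the model because Proposition~\ref{prop:model-fredholm}(iii) supplies augmented solvability.

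\emph{Step 1: the point $z_0^w$.} With $\alpha_w=\Omega^{1/2}>0$ we have $i\alpha_w\lambda_0=\lambda_0\Omega^{1/2}e^{i\pi/2}$. This lies off the cut $i[0,\infty)$ only in the limiting sense, so I will take the principal branch from $\Re\alpha>0$. Then $c(\alpha_w)=\lambda_0^{1/3}\Omega^{1/6}e^{i\pi/6}$ and $c^2=\lambda_0^{2/3}\Omega^{1/3}e^{i\pi/3}$. Hence
\[
z_0^w=-i\Omega/c^2=\lambda_0^{-2/3}\Omega^{2/3}e^{-i\pi/2-i\pi/3},
\]
which is \eqref{eq:model-z0w}. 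I will also check $|\arg c|\le\pi/6$, so that the recessive branch in \eqref{eq:model-adjoint-admissible} is the correct one at $\alpha_w$.

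\emph{Step 2: the pairing formula \eqref{eq:model-pairing}.} I write $\langle\FKH,\Psi^\ast_{\rm m}\rangle$ by Parseval as a contour integral over the weighted line $\Im\alpha=-\vartheta$ of $\widehat{\FKH}(\alpha)$ against the transformed adjoint state. The adjoint state is built from the Wiener--Hopf factor $K_-$ and the admissible pair \eqref{eq:model-adjoint-admissible}.

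\begin{itemize}
\item $\FKH$ is the matching datum of the normalized wake mode, supported downstream with profile $e^{i\alpha_wX}$. Its transform therefore carries the factor $1/D_w(\alpha)$ through \eqref{eq:model-WH}.
\item Closing the contour in the half-plane where the adjoint factors are analytic leaves only the simple pole at $\alpha_w$, which lies above the weighted line by Proposition~\ref{prop:model-fredholm}(ii). The residue produces $1/\partial_\alpha D_w(\alpha_w)=-1/(2\Omega^{1/2})$ together with $1/K_-(\alpha_w)$.
\item The remaining adjoint amplitude is the wall trace of the recessive component. For the slug wake mode \eqref{eq:model-slug} the pairing against $u^\ast$ reduces, via the concomitant \eqref{eq:LD-concomitant} at $Y=0$, to $g(0)=\Ai'(z_0)$ evaluated at $\alpha_w$.
\end{itemize}

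All other factors (the normalizations of $\ell_{KH}$, of $\Min$, and of the factorization) are collected into $\mathcal C(\Omega)$.

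The main obstacle is certifying that only the $\Ai'$ component survives at $\alpha_w$, so that the Scorer component $\Gi'$ contributes nothing. My first attempt is to use the wake-side adjoint condition $u^\ast_Y=0$, which fixes the combination of the admissible pair, and then check that the $\Gi$-part cancels against the slug's $Y$-linear $\hat v$ in $J^Y$. Failing that, I would absorb the Scorer contribution into $\mathcal C(\Omega)$ and track it as a separate analytic factor.

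\emph{Step 3: nondegeneracy.} All zeros of $\Ai'$ lie on the negative real axis, $\arg z=\pi$. The ray $\arg z=-5\pi/6$ never meets them, so $\Ai'(z_0^w)\ne0$ for every $\Omega>0$. The other factors are also nonzero:
\begin{itemize}
\item $\partial_\alpha D_w=-2\Omega^{1/2}\ne0$;
\item $K_-$ is zero-free off $\Sigma_0$ by Proposition~\ref{prop:model-fredholm};
\item $\mathcal C\ne0$ by construction.
\end{itemize}
Hence the pairing is nonzero off $\Sigma_0$.

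Proposition~\ref{prop:KF-noncirc} gives $\langle\FKH,\Psi^\ast_{\rm m}\rangle=-\kappa C_-^{(KH)}$. I set $\Sigma=\Sigma_0\cup\{\Omega: C_-^{(KH)}(\Omega)=0\}$. The function $C_-^{(KH)}(\Omega)$ is real-analytic in $\Omega$ and not identically zero; for the latter I would check a small-$\Omega$ asymptotic limit. So its zero set is discrete, and $\kappa=-\langle\FKH,\Psi^\ast_{\rm m}\rangle/C_-^{(KH)}\ne0$ on $(0,\infty)\setminus\Sigma$. Together with Proposition~\ref{prop:model-fredholm}, this verifies (H3)--(H4).
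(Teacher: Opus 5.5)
Your three-step architecture is the paper's: the direct computation of $z_0^w$, then a Parseval evaluation of $\langle\FKH,\Psi^\ast_{\rm m}\rangle$ at $\alpha_w$ that produces $1/K_-(\alpha_w)$, $1/\partial_\alpha D_w(\alpha_w)$ and an Airy amplitude, then nonvanishing from the location of the zeros of $\Ai'$, and finally transfer to $\kappa$ through \eqref{eq:KF-coeff-match}, with the zeros of $C_-^{(KH)}$ placed in $\Sigma$. The gap is in the one step you try to make concrete: where the factor $\Ai'(z_0^w)$ comes from.

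For the slug \eqref{eq:model-slug} one has $v(X,0)=0$, $u_Y\equiv0$ and $V_0=0$. So the wall concomitant \eqref{eq:LD-concomitant} reduces to $J^Y|_{Y=0}=u^\ast_Y(X,0)\,u(X,0)=c\,g_z(z_0)\,\hat a$. The only term that could carry $g(0)$, namely $-u^\ast u_Y$, is absent. For $g=\Ai'$ this is $c\,z_0\Ai(z_0)\,\hat a$, which is not a multiple of $\Ai'(z_0)$. Moreover, $\FKH$ lives on $X>0$, where the transposed condition $u^\ast_Y=0$ holds, so this term is identically zero there. The $Y=0$ concomitant against the slug therefore cannot be the source of $\Ai'(z_0^w)$.

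The Scorer issue you flag is real, and neither of your remedies works.
\begin{itemize}
\item Imposing $u^\ast_Y(0)=0$ on $g=A_1\Ai'(z)+A_2\Gi'(z)$ gives $A_1z_0\Ai(z_0)+A_2\big(z_0\Gi(z_0)-1/\pi\big)=0$. Since $z_0^w\neq0$ and $\Ai(z_0^w)\neq0$ on the ray, $A_2=0$ forces $A_1=0$. So the wake-side condition \emph{requires} a Scorer component rather than excluding it.
\item Absorbing a surviving $\Gi'$ contribution into $\mathcal C(\Omega)$ is not legitimate either. That contribution enters additively, and $a\Ai'(z_0^w)+b\Gi'(z_0^w)$ can vanish even though $\Ai'(z_0^w)\neq0$, so your Step~3 no longer applies.
\end{itemize}
The paper does not go through a wake-side concomitant. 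It identifies factor (c) of the adjoint Wiener--Hopf solution at $\alpha_w$ directly with the recessive adjoint amplitude of \Cref{thm:model-adjoint}, proportional to $\Ai'(z_0^w)$ (the wall value of $u^\ast=\Ai'(z)$ in \eqref{eq:model-anchor-adjoint}). Only then does it invoke the zeros of $\Ai'$; it does not discuss the Scorer branch separately.

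To close your version, you must do one of two things:
\begin{itemize}
\item show that the adjoint transform at $\alpha_w$ is a nonzero multiple of that recessive wall amplitude; or
\item keep the full coefficient as an analytic function of $\Omega$, prove it is not identically zero (e.g.\ from an asymptotic limit), and add its zeros to $\Sigma$.
\end{itemize}
The second option still yields \eqref{eq:model-Sigma} on the complement of a discrete set, but not via the ray argument.

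Two minor points. First, the transform of the half-line exponential datum is the cut-free $\propto(\alpha-\alpha_w)^{-1}$, not $1/D_w$, which carries the imaginary-axis cut of $\gamma$. No contour closing is needed: the pairing is directly the evaluation at $\alpha_w$ of the downstream half-line transform of the adjoint. Second, $\alpha_w>0$ is nowhere near the cut $\alpha\in i[0,\infty)$, since $\arg(i\alpha_w\lambda_0)=\pi/2$.
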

\begin{proof}
Formula \eqref{eq:model-z0w} is direct:
\(z_0^w=-i\Omega(i\Omega^{1/2}\lambda_0)^{-2/3}
=\Omega^{2/3}\lambda_0^{-2/3}e^{-i\pi/2}e^{-i\pi/3}\).
For \eqref{eq:model-pairing}: the matching datum \(\FKH\) of the (normalized)
wake mode is, after the wake-mode subtraction implicit in the weight choice,
concentrated on the downstream half-line with profile
\(\propto e^{i\alpha_wX}\); its bilinear pairing with
\(\Psi^\ast_{\rm m}\) is, by Parseval, the evaluation of the adjoint
transform at \(\alpha=\alpha_w\).  The adjoint transform is the Wiener--Hopf
solution of Proposition~\ref{prop:model-fredholm}(ii); at \(\alpha_w\) it is a product of
(a) the factor \(1/K_-(\alpha_w)\), (b) the simple-zero factor
\(1/\partial_\alpha D_w(\alpha_w)\) arising from the wake-side elimination,
and (c) the recessive adjoint amplitude, which by
\Cref{thm:model-adjoint} is proportional to \(\Ai'(z_0^w)\); collecting the
remaining nonzero normalization constants into \(\mathcal C(\Omega)\) gives
\eqref{eq:model-pairing}.
Nonvanishing: \(K_-(\alpha_w)\neq0\) because the factorization is zero-free
by construction; \(\partial_\alpha D_w(\alpha_w)=-2\Omega^{1/2}\neq0\); and
\(\Ai'(z_0^w)\neq0\) for every \(\Omega>0\), since all zeros of
\(\Ai'\) lie on the negative real axis~\cite{apostol2010nist} while
\(\arg z_0^w=-5\pi/6\neq\pm\pi\).  The only possible degeneracies are those of
\(\mathcal C(\Omega)\) and of the contour conditions defining
\(K_\pm\), i.e.\ zeros of the analytic functions
\(D_p(\cdot;\Omega)\), \(\Ai(z_0(\cdot))\), \(\kappa_1(z_0(\cdot))\) on the
weighted contours, together with possible zeros of \(C_-^{(KH)}\) excluded by
{\rm(H2)}; each is the intersection of the zero set of a nontrivial analytic
function with a fixed ray or contour, hence a discrete set
\(\Sigma\subset(0,\infty)\).  Combining with
Propositions~\ref{prop:KF-noncirc} and~\ref{prop:model-fredholm} yields
\eqref{eq:model-Sigma} and the final claim.
\end{proof}
\Cref{fig:model-airy} illustrates the proposition numerically: along the
wake-pole ray \eqref{eq:model-z0w} the three Airy quantities entering
\eqref{eq:model-pairing} and the kernel \eqref{eq:model-kernel} remain
bounded away from zero over the sampled frequency range.
\begin{figure}[t]
\centering
\includegraphics[width=0.72\textwidth]{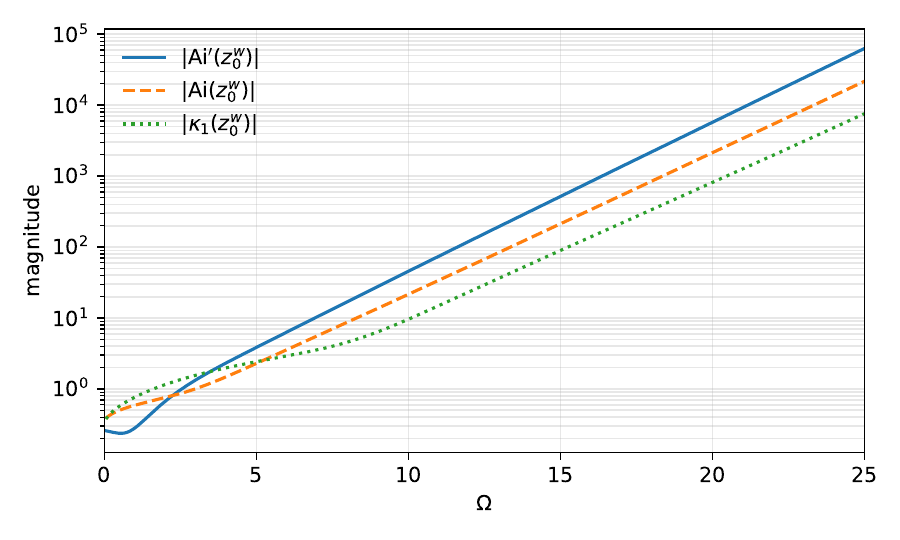}
\caption{Nondegeneracy along the wake-pole ray \eqref{eq:model-z0w}
(\(\lambda_0=1\)): magnitudes of \(\Ai'(z_0^w)\), \(\Ai(z_0^w)\), and
\(\kappa_1(z_0^w)\) for \(\Omega\in(0,25]\).  All three factors remain
bounded away from zero (sampled minima \(\approx0.24\), \(0.39\), and
\(0.38\), respectively), consistent with Proposition~\ref{prop:model-H4}.}
\label{fig:model-airy}
\end{figure}
\clearpage
\begin{corollary}[Inner selection in the model]
\label{cor:model-main}
For the linear-shear model and \(\Omega\in(0,\infty)\setminus\Sigma\), the
conclusions of \Cref{thm:conditional-viscous-Kutta} hold unconditionally:
bounded lower-deck matching enforces \(C_-(A)=0\), the selected amplitude is
the adjoint quotient \eqref{eq:Arec-inner} with
\(\Psi^\ast=\Psi^\ast_{\rm m}\) explicit through
\Cref{thm:model-adjoint}, and it coincides with the wake-pole formula
\eqref{eq:model-residue}.
\end{corollary}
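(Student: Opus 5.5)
The corollary follows by checking that, in the linear-shear model with $\Omega\in(0,\infty)\setminus\Sigma$, every hypothesis of \Cref{thm:conditional-viscous-Kutta} is a proved statement rather than an assumption. Then we apply that theorem and the residue theorem \Cref{thm:flat-pole-residue}.

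\emph{Fredholm structure.} Fix $\Omega\notin\Sigma$. Since $\Sigma\supset\Sigma_0$, Proposition~\ref{prop:model-fredholm}(i) gives invertible frozen plate and wake limits. By the remark after that proposition, the kernel $K$ of \eqref{eq:model-kernel} is zero-free on the real contour, and it admits a zero-index canonical factorization for $\vartheta>0$ small. So Proposition~\ref{prop:model-fredholm}(ii) applies: $\LTD^{\rm m}(\Omega)$ is Fredholm of index zero, with $\ker\LTD^{\rm m}(\Omega)^\ast=\spn\{\Psi^\ast_{\rm m}\}$. Proposition~\ref{prop:model-fredholm}(iii) supplies augmented solvability with a finite edge concomitant. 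Together these give Hypothesis~\ref{hyp:Fredholm-LD}, parts (i) and (ii). The generator $\Psi^\ast_{\rm m}$ is explicit: it is assembled from the admissible Airy pairs \eqref{eq:model-adjoint-admissible} of \Cref{thm:model-adjoint}.

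\emph{Nondegeneracy.} Next invoke Proposition~\ref{prop:model-H4}. Off $\Sigma$ the pairing \eqref{eq:model-pairing} is nonzero, because $\Ai'(z_0^w)\neq0$ on the ray $\arg z=-5\pi/6$, $K_-(\alpha_w)\neq0$, and $\partial_\alpha D_w(\alpha_w)=-2\Omega^{1/2}\neq0$. It then gives $\kappa(\Omega)\neq0$ and $C_-^{(KH)}\neq0$ (the zeros of the latter are placed in $\Sigma$). This is Hypothesis~\ref{hyp:edge-concomitant}. Proposition~\ref{prop:KF-noncirc} now applies and yields \eqref{eq:KF-identity}. \Cref{thm:conditional-viscous-Kutta} then gives the inner conclusions: $C_-(A)=0$ is necessary for bounded matching, and the selected amplitude equals $A_{\rm Kutta}=A_{\rm Fr}$ in the form \eqref{eq:Arec-inner}, with $\Psi^\ast=\Psi^\ast_{\rm m}$.

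\emph{Residue.} It remains to identify this value with \eqref{eq:model-residue}. The route is uniqueness of the Kutta-normalized field (Lemma~\ref{lem:kutta-unique}). The Kutta-normalized Wiener--Hopf response $\Mcal^{\rm m}=\Ncal^{\rm m}/D_w$ has minimal edge growth, so its edge coefficient vanishes; it is therefore the field with $A=A_{\rm Kutta}$. Its wake coefficient is read off by the causal contour deformation of \Cref{thm:flat-pole-residue}. This uses the simple pole $\alpha_w=\Omega^{1/2}$, which lies in $\mathcal P_{\rm down}$ by the Briggs--Bers continuation $\Omega\mapsto\Omega+i\varsigma$. The simple-pole Laurent coefficient gives $i\Ncal^{\rm m}(\alpha_w)/\partial_\alpha D_w(\alpha_w)=-i\Ncal^{\rm m}/(2\Omega^{1/2})$.

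\emph{Main obstacle.} I expect the delicate step to be this last one. \Cref{thm:flat-pole-residue} is formulated for $\Im\alpha_{KH}<0$, while the model pole is neutral. The contour must therefore be indented above $\alpha_w$ as the limit $\varsigma\downarrow0$ dictates, and this limit has to be consistent with the downstream weight $e^{-\vartheta X}$, whose contour $\Im\alpha=-\vartheta$ lies below $\alpha_w$. I would handle it by first proving the identity for $\Omega+i\varsigma$, where the pole is strictly inside the strip and every object is analytic in $\varsigma$. I would then pass to the limit using continuity of $K_\pm$, $\Ncal^{\rm m}$ and $\Psi^\ast_{\rm m}$ in $\Omega$ away from the discrete set $\Sigma$.
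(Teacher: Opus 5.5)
Your route is the paper's own. The corollary is the assembly recorded in the proof of \Cref{thm:model-anchor}: Propositions~\ref{prop:model-fredholm} and~\ref{prop:model-H4} (with $\Psi^\ast_{\rm m}$ from \Cref{thm:model-adjoint}) supply the hypotheses of \Cref{thm:conditional-viscous-Kutta}, and \Cref{thm:flat-pole-residue}, read with $(D_w,\alpha_w,\Mcal^{\rm m})$ in place of $(\Dcal,\alpha_{KH},\Mcal)$, gives \eqref{eq:model-residue}. Two points need correcting, however.

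\textbf{Contour geometry.} Your ``main obstacle'' paragraph has the geometry reversed. With kernel $e^{i\alpha X}$ and $X>0$, one closes upward and collects only poles lying above $\Gamma$. For $\varsigma>0$ the pole $\alpha_w=(\Omega+i\varsigma)^{1/2}$ has $\Im\alpha_w>0$, and it descends onto the real axis from above as $\varsigma\downarrow0$. Causality therefore forces $\Gamma$ to pass \emph{below} $\alpha_w$, not above it. If you indent above $\alpha_w$, the pole drops out of the upward closure. The $X>0$ field then carries no $e^{i\alpha_wX}$ term, and the wake coefficient is not $i\Res_{\alpha=\alpha_w}\Mcal^{\rm m}$.

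Once the side is corrected, the tension you anticipate with the weight disappears. The downstream line $\Im\alpha=-\vartheta$ already lies below $\alpha_w$; this is exactly the statement in Proposition~\ref{prop:model-fredholm}(ii) that $\alpha_w$ sits above it, on the kernel side. It is also the line on which the factorization of \eqref{eq:model-kernel} has to be taken. The factor $D_w$ in the denominator gives $K$ a pole at the real point $\alpha_w$, so ``zero-free on the real contour'' does not by itself suffice.

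Your $\varsigma$-continuation is then just the Briggs--Bers classification given after \eqref{eq:model-wakepole}, and it only serves to fix the side of the contour. The simple-pole pickup on $\Im\alpha=-\vartheta$ gives $i\Ncal^{\rm m}(\alpha_w)/\partial_\alpha D_w(\alpha_w)$ directly, with no further limiting argument.

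\textbf{Ordering of the nondegeneracy step.} You obtain $\kappa(\Omega)\neq0$ from \eqref{eq:model-Sigma}, which is derived from $\langle\FKH,\Psi^\ast_{\rm m}\rangle=-\kappa\,C_-^{(KH)}$. Only afterwards do you invoke Proposition~\ref{prop:KF-noncirc}, whose statement assumes $\kappa\neq0$, to produce that same identity. This is not a genuine circle. The affine identity \eqref{eq:compat-expanded} follows from the Green identity plus augmented solvability (Proposition~\ref{prop:model-fredholm}(iii)) alone, so derive it first. The independently computed nonvanishing of \eqref{eq:model-pairing} then yields $\kappa\neq0$ and $C_-^{(KH)}\neq0$ simultaneously. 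That makes the separate placement of zeros of $C_-^{(KH)}$ into $\Sigma$ redundant.
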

\begin{remark}[Antisymmetric component and the genuine KH pole]
\label{rem:model-antisym}
The computation above is for the symmetric component, whose model wake mode is
the neutral slug \eqref{eq:model-slug}.  For the antisymmetric (flapping)
component, the wake-side conditions \eqref{eq:LD-bc-antisym} replace
\eqref{eq:LD-bc-sym}; the elimination proceeds identically, with
\eqref{eq:model-Zw} replaced by the flapping impedance
\(Z_w^{\rm a}(\alpha;\Omega)\) obtained from the kinematic condition and the
antisymmetric interaction map, and the wake dispersion function
\(D_w^{\rm a}=Z_w^{\rm a}-\gamma^{\rm a}\) acquires complex zeros with
\(\Im\alpha_w^{\rm a}<0\) (the lower-deck counterpart of the
Kelvin--Helmholtz mode), restoring the strict inequality of
Assumption~\ref{ass:simple-KH}.  None of the structural steps changes: the adjoint
basis \eqref{eq:model-adjoint-basis} is the same (it depends only on the bulk
operator), the kernel/cokernel count is the same with \(\vartheta>
|\Im\alpha_w^{\rm a}|\), and Proposition~\ref{prop:model-H4} holds with \(z_0^w\)
evaluated at \(\alpha_w^{\rm a}\), the ray \eqref{eq:model-z0w} replaced by a
curve in \(|\arg z_0^w|<\pi\) still avoiding the negative real axis for
\(\Im\alpha_w^{\rm a}<0\) small.  The explicit form of \(Z_w^{\rm a}\) for the
true near-wake profile requires the numerical base flow
of~\cite{jobe1974numerical} and is left to future work.
\end{remark}

% =====================================================================
\section{Transform representation and pole-residue formula}
\label{sec:transform}
We connect the Fredholm-selected amplitude of \Cref{sec:inner} with the pole
coefficient of the transformed outer problem.  Throughout,
        $\displaystyle \Phi_A^{\rm out}
        =
        \Phi_0^{\rm out}+A\Phi_{KH}^{\rm out}$,
        $\displaystyle C_-(A)=C_-^{(0)}+A C_-^{(KH)}$, and
        $\displaystyle C_-^{(KH)}\neq0 $.
The selected value from the lower deck is
$\displaystyle         \Arec(\Omega)
        =
        -
        \frac{\langle \Finc(\Omega),\Psi^\ast(\Omega)\rangle_{\Hcal_\sigma}}
             {\langle \FKH(\Omega),\Psi^\ast(\Omega)\rangle_{\Hcal_\sigma}} $.
% ---------------------------------------------------------------------
\subsection{Kutta-normalized Wiener--Hopf representation}
\label{subsec:WH-representation}
For the flat plate, introduce
\[
        \widehat f(\alpha)
        =
        \int_{-\infty}^{\infty}f(x)e^{-i\alpha x}\,dx,\qquad
        \widehat f_+(\alpha)=\int_0^\infty f(x)e^{-i\alpha x}\,dx,\qquad
        \widehat f_-(\alpha)=\int_{-\infty}^{0}f(x)e^{-i\alpha x}\,dx .
\]
With \(e^{i\alpha x-i\omega_{\rm phys}t}\), the \(+\)-transform is analytic in
a lower half-strip and the \(-\)-transform in an upper half-strip.  Fourier
transformation of
\[
        \Lout\phi=0,\qquad
        \Lout=\beta^2\partial_x^2+\partial_y^2+2iMk_0\partial_x+k_0^2
\]
gives
$\displaystyle         \partial_y^2\widehat\phi-\mu(\alpha)^2\widehat\phi=0$ with
$\displaystyle         \mu(\alpha)^2=\beta^2\alpha^2+2Mk_0\alpha-k_0^2$ .
The physical branch is fixed by
        $\displaystyle \Re \mu(\alpha)>0$ 
        on the inversion contour, with 
$\displaystyle         \alpha_\pm=\pm\frac{k_0}{1\pm M}$
being the acoustic branch points of \eqref{eq:branch-points}.  Hence
$\displaystyle         \widehat\phi^\pm(\alpha,y)
        =
        \widehat\phi^\pm(\alpha,0)e^{\mp\mu(\alpha)y}$
        for $\pm y>0$.
The plate condition on \(x<0\) and the sheet condition on \(x>0\) yield a
Wiener--Hopf equation~\cite{noble1962methods} of the equation
$\displaystyle         K(\alpha;\omega_{\rm phys},\Gcal)\,\widehat\eta_+(\alpha)
        +
        \widehat q_-(\alpha)
        =
        \widehat f(\alpha;\omega_{\rm phys},\Gcal)$,
where \(K\) is the scalar kernel obtained from the sheet determinant and
\(\widehat f\) is the transformed acoustic forcing.  Its zero set contains the
spatial wake spectrum:
\[
        K(\alpha;\omega_{\rm phys},\Gcal)=0
        \quad\Longleftrightarrow\quad
        \Dcal(\alpha;\omega_{\rm phys},\Gcal)=0
        \quad\text{up to nonzero analytic factors}.
\]
Assume a canonical factorization in a common strip \(\mathfrak S\):
$\displaystyle         K(\alpha;\omega_{\rm phys},\Gcal)
        =
        K_+(\alpha;\omega_{\rm phys},\Gcal)
        K_-(\alpha;\omega_{\rm phys},\Gcal)$,
and
$\displaystyle  K_\pm^{\pm1}\in\mathcal O(\mathfrak S_\pm)$
with the retained downstream wake pole excluded from \(K_-^{-1}\) and kept
explicitly in the meromorphic response.  Splitting
$\displaystyle         \frac{\widehat f}{K_-}
        =
        \left(\frac{\widehat f}{K_-}\right)_+
        +
        \left(\frac{\widehat f}{K_-}\right)_-$
gives
$\displaystyle         K_+\,\widehat\eta_+
        -
        \left(\frac{\widehat f}{K_-}\right)_+
        =
        -\frac{\widehat q_-}{K_-}
        +
        \left(\frac{\widehat f}{K_-}\right)_- $.
The two sides extend to an entire function \(P(\alpha)\).  The edge condition
fixes the polynomial ambiguity:
\begin{equation}
\label{eq:Kutta-normalization-transform}
        C_-(\Phi^{\rm out})=0
        \quad\Longleftrightarrow\quad
        P=P_{\rm Kutta},
\end{equation}
equivalently, in terms of the Abelian correspondence between the edge
behavior of \(\eta\) and the decay of its transform: the non-Kutta state
associated with the \(r^{1/2}\) potential term has
\(\eta\sim{\rm const}\cdot x^{1/2}\) and hence
\(\widehat\eta_+=O(|\alpha|^{-3/2})\) at infinity in \(\mathfrak S_-\), while
the Kutta-normalized state has the attached-sheet behavior
\(\eta\sim{\rm const}\cdot x\,(1+O(x^{1/2}))\) and hence
\[
        \widehat\eta_+(\alpha)
        =
        O(|\alpha|^{-2})
        \qquad (|\alpha|\to\infty\ \text{in }\mathfrak S_-),
\]
the classification of the edge exponents \(x^{1/2}\), \(x\), \(x^{3/2}\)
being that of Orszag \& Crow~\cite{orszag1970instability}; see
also~\cite{crighton1985kutta}.
\begin{assumption}[Kutta-normalized meromorphic response]
\label{ass:WH-meromorphic}
The flat-plate Kutta-normalized outer response has
$\displaystyle         \widehat\eta_+(\alpha)
        =
        \Mcal(\alpha;\omega_{\rm phys},\Gcal)
        =
        \frac{\Ncal(\alpha;\omega_{\rm phys},\Gcal)}
             {\Dcal(\alpha;\omega_{\rm phys},\Gcal)}$
in the downstream deformation domain \(\mathfrak D_-\), where
\(\Ncal,\Dcal\in\mathcal O(\mathfrak D_-)\) except for acoustic cuts, and
\[
        \Dcal(\alpha_{KH};\omega_{\rm phys},\Gcal)=0,\qquad
        \partial_\alpha\Dcal(\alpha_{KH};\omega_{\rm phys},\Gcal)\neq0,\qquad
        \Ncal(\alpha_{KH};\omega_{\rm phys},\Gcal)\neq0 .
\]
\end{assumption}
Thus near \(\alpha_{KH}\),
$\displaystyle        \Mcal(\alpha;\omega_{\rm phys},\Gcal)
        =
        \frac{\mathcal R_{KH}(\omega_{\rm phys},\Gcal)}
             {\alpha-\alpha_{KH}}
        +
        \Mcal_{\rm hol}(\alpha;\omega_{\rm phys},\Gcal)$,
where
\[
\mathcal R_{KH}(\omega_{\rm phys},\Gcal)
        =
        \Res_{\alpha=\alpha_{KH}}\Mcal(\alpha;\omega_{\rm phys},\Gcal)
        =
        \frac{\Ncal(\alpha_{KH};\omega_{\rm phys},\Gcal)}
             {\partial_\alpha\Dcal(\alpha_{KH};\omega_{\rm phys},\Gcal)}.
\]
% ---------------------------------------------------------------------
\subsection{Uniqueness of the Kutta-normalized outer solution}
\label{subsec:kutta-unique-transform}
Let
$\displaystyle         \mathfrak K:\Phi^{\rm out}\mapsto C_-(\Phi^{\rm out})$
be the singular edge functional.  On the affine outer family,
$\displaystyle         \mathfrak K(\Phi_A^{\rm out})
        =
        C_-^{(0)}+A C_-^{(KH)}$ .
Hence
\begin{equation}
\label{eq:A-Kutta-transform}
        A_{\rm Kutta}^{\rm out}
        =
        -\frac{C_-^{(0)}}{C_-^{(KH)}} .
\end{equation}
\begin{lemma}[Uniqueness of the Kutta-normalized outer solution]
\label{lem:kutta-normalized-unique}
Assume
\[
        \ker\Aout^{\rm hom}=\spn\{\Phi_{KH}^{\rm out}\},
        \qquad
        C_-^{(KH)}\neq0 .
\]
Then the set
$\displaystyle         \{\Phi^{\rm out}:\Aout\Phi^{\rm out}=\Fout^{\rm inc},\
        \Phi^{\rm out}\ \text{outgoing},\
        \mathfrak K(\Phi^{\rm out})=0\}$
contains exactly one element, namely
$\displaystyle         \Phi_{\rm Kutta}^{\rm out}
        =
        \Phi_0^{\rm out}
        -
        \frac{C_-^{(0)}}{C_-^{(KH)}}\Phi_{KH}^{\rm out}$.
\end{lemma}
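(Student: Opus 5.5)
The plan is to reduce the statement to the affine structure already established in \Cref{sec:outer} and then argue existence and uniqueness separately. No transform machinery is needed: the lemma is the transform-section restatement of Lemma~\ref{lem:kutta-unique}, and it uses only the one-dimensional kernel \eqref{eq:outer-kernel-ass} and the linearity of the edge functional $\mathfrak K$.

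\emph{Step 1 (parametrization).} Let $\Phi^{\rm out}$ be any element of the set. By Proposition~\ref{prop:outer-ambiguity}, since $\Phi^{\rm out}$ is outgoing with the same incident forcing, there is a unique $A\in\mathbb C$ with $\Phi^{\rm out}=\Phi_0^{\rm out}+A\Phi_{KH}^{\rm out}=\Phi_A^{\rm out}$. Conversely, every $\Phi_A^{\rm out}$ solves $\Aout\Phi=\Fout^{\rm inc}$ and is outgoing. The set therefore corresponds bijectively to
\[
\{A\in\mathbb C:\mathfrak K(\Phi_A^{\rm out})=0\}.
\]
Injectivity of $A\mapsto\Phi_A^{\rm out}$ holds because $\Phi_{KH}^{\rm out}\neq0$, which follows from the normalization $\ell_{KH}(\Phi_{KH}^{\rm out})=1$.

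\emph{Step 2 (linearity of $\mathfrak K$).} The projection \eqref{eq:Cminus-projection} is linear in $\phi$: the mean $\bar\phi_\rho$ is linear, and the limit of a sum equals the sum of the limits whenever both limits exist. Existence of the limits for $\Phi_0^{\rm out}$ and $\Phi_{KH}^{\rm out}$ is supplied by the Kondrat'ev expansion of Proposition~\ref{prop:edge-expansion}. This step is the only one needing care, since Proposition~\ref{prop:edge-expansion} is stated under Assumption~\ref{ass:edge-indicial}, which this lemma does not list explicitly. I would either invoke that assumption as standing, or note that the hypothesis $C_-^{(KH)}\neq0$ presupposes that $C_-$ is defined on the family. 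In either case we obtain $\mathfrak K(\Phi_A^{\rm out})=C_-^{(0)}+AC_-^{(KH)}$, which is \eqref{eq:Cminus-affine}.

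\emph{Step 3 (solve the scalar equation).} Since $C_-^{(KH)}\neq0$, the affine equation $C_-^{(0)}+AC_-^{(KH)}=0$ has exactly one root,
\[
A=-C_-^{(0)}/C_-^{(KH)},
\]
which is \eqref{eq:A-Kutta-transform}. Substituting this root into the parametrization of Step 1 gives $\Phi_{\rm Kutta}^{\rm out}$. Existence holds because this element satisfies all three defining conditions. Uniqueness holds because any element of the set has the form $\Phi_A^{\rm out}$ with $A$ a root of the affine equation, and that root is unique.
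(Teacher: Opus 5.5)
Your proposal is correct and follows the paper's own argument. Like the paper, you parametrize every outgoing forced solution as \(\Phi_A^{\rm out}\) using the one-dimensional kernel, reduce \(\mathfrak K(\Phi_A^{\rm out})=0\) to the affine equation \(C_-^{(0)}+AC_-^{(KH)}=0\), and solve it using \(C_-^{(KH)}\neq0\). Your extra remarks on the injectivity of \(A\mapsto\Phi_A^{\rm out}\), and on the existence and linearity of the limit defining \(\mathfrak K\) (which the paper leaves implicit, relying on Proposition~\ref{prop:edge-expansion}), are sound refinements rather than a different route.
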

\begin{proof}
Every outgoing forced solution is \(\Phi_A^{\rm out}\).  The constraint
\(\mathfrak K(\Phi_A^{\rm out})=0\) is the scalar equation
\(C_-^{(0)}+A C_-^{(KH)}=0\), which has the unique solution
\eqref{eq:A-Kutta-transform}.
\end{proof}
\begin{corollary}[Identification principle]
\label{cor:identification-principle}
If \(\widetilde\Phi^{\rm out}\) is produced by any transform construction
satisfying the same incident field, outgoing convention, and Kutta
normalization, then
$\displaystyle         \widetilde\Phi^{\rm out}=\Phi_{\rm Kutta}^{\rm out}$.
Consequently, its coefficient of \(\Phi_{KH}^{\rm out}\) is
\(A_{\rm Kutta}^{\rm out}\).
\end{corollary}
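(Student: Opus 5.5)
The statement to prove is Corollary~\ref{cor:identification-principle}. It follows almost immediately from Lemma~\ref{lem:kutta-normalized-unique}. The plan is to show that the transform-constructed field lies in the set characterized there, invoke uniqueness, and then read off the $\Phi_{KH}^{\rm out}$-coefficient through Proposition~\ref{prop:outer-ambiguity}.

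First, I will check membership. By hypothesis, $\widetilde\Phi^{\rm out}$ is generated by the same incident field, so after subtraction of $\phi^{\rm inc}$ it satisfies $\Aout\widetilde\Phi^{\rm out}=\Fout^{\rm inc}$. It also obeys the outgoing/causal convention: the acoustic part radiates and the hydrodynamic sheet modes satisfy downstream causality. Finally, it is Kutta-normalized. By \eqref{eq:Kutta-normalization-transform}, the choice $P=P_{\rm Kutta}$ of the entire function is equivalent to $C_-(\widetilde\Phi^{\rm out})=0$, that is, $\mathfrak K(\widetilde\Phi^{\rm out})=0$. Hence $\widetilde\Phi^{\rm out}$ belongs to the set in Lemma~\ref{lem:kutta-normalized-unique}. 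That set contains exactly one element under the standing assumptions $\ker\Aout^{\rm hom}=\spn\{\Phi_{KH}^{\rm out}\}$ and $C_-^{(KH)}\neq0$, so $\widetilde\Phi^{\rm out}=\Phi_{\rm Kutta}^{\rm out}$.

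Second, I will identify the coefficient. Proposition~\ref{prop:outer-ambiguity} writes $\widetilde\Phi^{\rm out}=\Phi_0^{\rm out}+\widetilde A\,\Phi_{KH}^{\rm out}$ for a unique $\widetilde A$, and the uniqueness there rests on $\Phi_{KH}^{\rm out}\neq0$. Comparing with the explicit form of $\Phi_{\rm Kutta}^{\rm out}$ gives $\widetilde A=-C_-^{(0)}/C_-^{(KH)}=A_{\rm Kutta}^{\rm out}$. Equivalently, applying the wake functional gives $\widetilde A=\ell_{KH}(\widetilde\Phi^{\rm out}-\Phi_0^{\rm out})$, using $\ell_{KH}(\Phi_{KH}^{\rm out})=1$.

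The only delicate point is the first step. The transform construction must produce a genuine element of the outgoing class on which $\Aout$ and $\mathfrak K$ are defined. In particular, the Abelian correspondence $\widehat\eta_+=O(|\alpha|^{-2})\Leftrightarrow C_-=0$ has to be read as a statement about the edge coefficient in \eqref{eq:Cminus-projection}. I would take this correspondence as given by the Orszag--Crow edge-exponent classification quoted before Assumption~\ref{ass:WH-meromorphic}. I would also treat the causal choice of inversion contour, which keeps the wake pole on the downstream side, as encoding the same outgoing convention used to define $\ker\Aout^{\rm hom}$. With these two points granted, the argument is purely algebraic.
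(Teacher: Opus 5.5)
Your proposal is correct and matches the paper's argument. The paper gives no separate proof: the corollary is read off directly from Lemma~\ref{lem:kutta-normalized-unique}. The transform solution lies in that singleton set via the normalization \eqref{eq:Kutta-normalization-transform}, and its coefficient follows from the unique affine decomposition of Proposition~\ref{prop:outer-ambiguity}, exactly as you argue; the two caveats you flag (the Abelian edge-exponent correspondence and the causal contour encoding the outgoing convention) are inputs the paper also takes as given.
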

Combining this with \Cref{thm:conditional-viscous-Kutta},
\begin{equation}
\label{eq:A-inner-outer-equality}
        A_{\rm Kutta}^{\rm out}
        =
        \Arec(\Omega)
        =
        -
        \frac{
        \langle \Finc(\Omega),\Psi^\ast(\Omega)\rangle_{\Hcal_\sigma}}
        {
        \langle \FKH(\Omega),\Psi^\ast(\Omega)\rangle_{\Hcal_\sigma}} .
\end{equation}
% ---------------------------------------------------------------------
\subsection{Causal inversion and residue formula for the flat plate}
\label{subsec:flat-residue}
The downstream displacement is recovered by
\begin{equation}
\label{eq:inverse-WH}
        \eta(x)
        =
        \frac{1}{2\pi}
        \int_\Gamma
        \Mcal(\alpha;\omega_{\rm phys},\Gcal)e^{i\alpha x}\,d\alpha,
        \qquad x>0 .
\end{equation}
The inversion contour \(\Gamma\) is not a free choice; it is fixed by
causality.  Restore the temporal Laplace by continuing
\(\omega_{\rm phys}\to\omega_{\rm phys}+i\sigma_t\) with \(\sigma_t\to+\infty\)
(the disturbance is switched on at finite time), so that the time transform is
analytic in the upper half \(\omega\)-plane and every spatial pole
\(\alpha_j(\omega_{\rm phys}+i\sigma_t)\) is displaced off the real
\(\alpha\)-axis into a definite half-plane.  With the kernel \(e^{i\alpha x}\),
\(\Gamma\) runs along this causal deformation of the real axis, and the
Briggs--Bers classification~\cite{briggs1964electron,bers1983space,monkewitz1990local}
applies:
\[
        \alpha_j\in\mathcal P_{\rm down}
        \iff
        \Im\alpha_j(\omega_{\rm phys}+i\sigma_t)>0
        \quad\text{for }\sigma_t\gg1 ,
\]
with \(\mathcal P_{\rm up}\) the complementary set.  The label is
invariant under the continuation \(\sigma_t:+\infty\to0^+\), even though the
pole positions move; \(\mathcal P_{\rm down}\) is the causal replacement for the
naive orientation-based set.
For \(x>0\) the kernel decays in the upper half-plane, so \(\Gamma\) is closed
upward and one collects exactly the downstream set:
\begin{equation}
\label{eq:contour-deformation}
        \eta(x)
        =
        i\sum_{\alpha_j\in\mathcal P_{\rm down}}
        \Res_{\alpha=\alpha_j}
        \big(\Mcal(\alpha)e^{i\alpha x}\big)
        +
        \eta_{\rm cuts}(x)+\eta_{\rm arc}(x),
\end{equation}
the constant \(i=\tfrac{1}{2\pi}\cdot2\pi i\) being the counterclockwise
orientation factor.
The Kelvin--Helmholtz pole is critical.  At \(\sigma_t\gg1\) it lies in the
upper half-plane, hence \(\alpha_{KH}\in\mathcal P_{\rm down}\); as
\(\sigma_t\to0^+\) it migrates downward across the real axis to its
physical position \(\Im\alpha_{KH}<0\), dragging \(\Gamma\) below it so that the
pole remains on the downstream side of the contour.  Thus \(\alpha_{KH}\) is
enclosed by the upward closure despite lying in the lower half-plane: it
is a downstream pole that has crossed the axis, which is precisely the
statement that the wake mode is an unstable, spatially
amplifying disturbance carried into \(x>0\),
$\displaystyle         e^{i\alpha_{KH}x}
        =
        e^{i(\Re\alpha_{KH})x}\,e^{|\Im\alpha_{KH}|x}$.
Its contribution is
\begin{equation}
\label{eq:KH-component}
        \eta_{KH}(x)
        =
        A_{\rm pole}(\Omega,\Gcal)\,e^{i\alpha_{KH}x},
        \qquad
        A_{\rm pole}(\Omega,\Gcal)
        =
        i\,\Res_{\alpha=\alpha_{KH}}\Mcal(\alpha;\omega_{\rm phys},\Gcal).
\end{equation}
Since the normalization \eqref{eq:ellKH-def} assigns unit displacement
amplitude to \(\Phi_{KH}^{\rm out}\), \(\eta_{KH}(x)=A\,e^{i\alpha_{KH}x}\)
identifies the coefficient of \(\Phi_{KH}^{\rm out}\) in the inverse-transform
solution as \(A=A_{\rm pole}\), with the orientation factor \(i\) carried
explicitly; no further normalization freedom is invoked.
The deformation, and hence \eqref{eq:KH-component}, presupposes
convective instability: the descending pole \(\alpha_{KH}\) reaches
\(\Im\alpha_{KH}<0\) without colliding with a member of
\(\mathcal P_{\rm up}\).  Such a collision is a Briggs--Bers
pinch~\cite{briggs1964electron,bers1983space}, signals the onset of absolute instability,
and invalidates the simple downstream residue pickup (the fixed-\(x\) response
then grows in \(T\) and is no longer of the form \eqref{eq:KH-component}).  We
assume no pinch throughout; the convective/absolute transition, and any
pole--cut collision with the downstream acoustic branch point
\(\alpha_+=+k_0/(1+M)\), are deferred to \Cref{sec:conclusion}.
If \(\alpha_{KH}\) is simple, then
$\displaystyle         A_{\rm pole}(\Omega,\Gcal)
        =
        \frac{i\,\Ncal(\alpha_{KH};\omega_{\rm phys},\Gcal)}
             {\partial_\alpha\Dcal(\alpha_{KH};\omega_{\rm phys},\Gcal)} $.
Figure~\ref{fig:causal_contour} shows the the integration contour $\Gamma$ in the complex $\alpha$-plane.

\begin{figure}[htbp]
    \centering
    \includegraphics[width=0.85\textwidth]{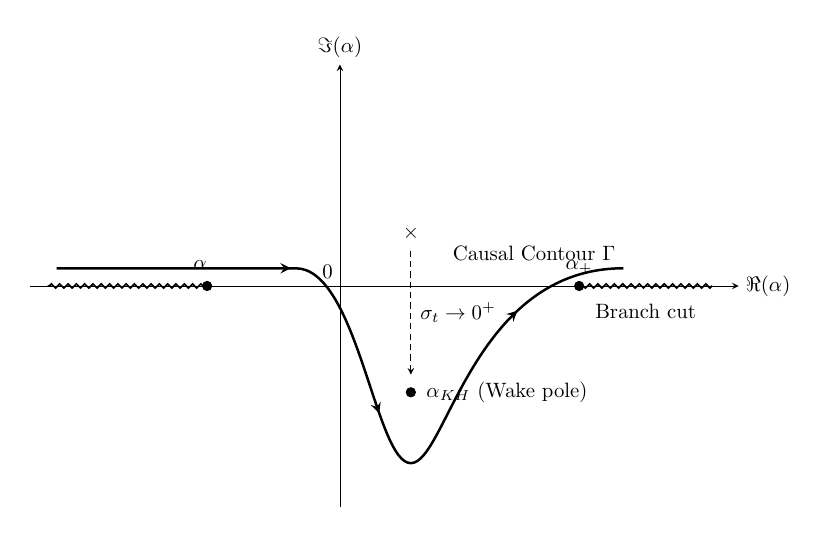}
    \caption{Schematic of the causal integration contour $\Gamma$ in the complex $\alpha$-plane. To satisfy causality as $\sigma_t \to 0^+$, the contour is deformed into the lower half-plane to pass below the moving wake pole $\alpha_{KH}$. The branch points $\alpha_\pm$ and their corresponding branch cuts are also depicted.}
    \label{fig:causal_contour}
\end{figure}
\clearpage
             
\begin{theorem}[Flat-plate pole-residue formula]
\label{thm:flat-pole-residue}
Assume Assumptions~\ref{ass:simple-KH},~\ref{ass:WH-meromorphic}, Hypotheses~\ref{hyp:Fredholm-LD},~\ref{hyp:edge-concomitant}
and \(C_-^{(KH)}\neq0\).  Then, with the wake normalization
\eqref{eq:ellKH-def},
$\displaystyle        \Arec(\Omega,\Gcal)
        =
        -
        \frac{
        \langle \Finc(\Omega),\Psi^\ast(\Omega)\rangle_{\Hcal_\sigma}}
        {
        \langle \FKH(\Omega),\Psi^\ast(\Omega)\rangle_{\Hcal_\sigma}}
        =
        i\Res_{\alpha=\alpha_{KH}}
        \Mcal(\alpha;\omega_{\rm phys},\Gcal)$.
For a simple pole,
$\displaystyle         \Arec(\Omega,\Gcal)
        =
        \frac{i\,\Ncal(\alpha_{KH};\omega_{\rm phys},\Gcal)}
             {\partial_\alpha\Dcal(\alpha_{KH};\omega_{\rm phys},\Gcal)} $.
\end{theorem}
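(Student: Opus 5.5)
The proof chains three identifications of the scalar \(A\), each already set up earlier in the paper. The first equality is inner; it comes from \Cref{thm:conditional-viscous-Kutta}. Under Hypotheses~\ref{hyp:Fredholm-LD} and~\ref{hyp:edge-concomitant} and \(C_-^{(KH)}\neq0\), bounded matching forces \(C_-(A)=0\). Proposition~\ref{prop:KF-noncirc} gives
\[
\langle\Finc,\Psi^\ast\rangle=-\kappa C_-^{(0)},\qquad
\langle\FKH,\Psi^\ast\rangle=-\kappa C_-^{(KH)}\neq0 .
\]
Hence the adjoint quotient equals \(-C_-^{(0)}/C_-^{(KH)}=A_{\rm Kutta}^{\rm out}\), which is \eqref{eq:A-inner-outer-equality}.

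Second, I would identify the transform solution with the Kutta-normalized outer field. By Assumption~\ref{ass:WH-meromorphic}, \(\widehat\eta_+=\Mcal\) is the Wiener--Hopf response with \(P=P_{\rm Kutta}\). By \eqref{eq:Kutta-normalization-transform}, its inverse transform \(\widetilde\Phi^{\rm out}\) is outgoing, carries the same incident forcing, and satisfies \(\mathfrak K(\widetilde\Phi^{\rm out})=0\). Lemma~\ref{lem:kutta-normalized-unique} and Corollary~\ref{cor:identification-principle} then give \(\widetilde\Phi^{\rm out}=\Phi_{\rm Kutta}^{\rm out}=\Phi_0^{\rm out}+A_{\rm Kutta}^{\rm out}\Phi_{KH}^{\rm out}\).

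Third, I would extract the coefficient of \(\Phi_{KH}^{\rm out}\) using the wake functional \(\ell_{KH}\) of \eqref{eq:ellKH-def}. Apply \(\ell_{KH}\) to both representations of \(\eta\):
\begin{itemize}
\item on the affine side, \(\ell_{KH}(\Phi_{KH}^{\rm out})=1\);
\item on the transform side, the causal deformation \eqref{eq:contour-deformation} writes \(\eta\) as downstream residues plus cut and arc contributions, and the \(\alpha_{KH}\) term is \eqref{eq:KH-component}.
\end{itemize}
Multiplying by \(e^{-i\alpha_{KH}x}\) and letting \(x\to\infty\) isolates \(i\Res_{\alpha_{KH}}\Mcal\). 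The simple-pole formula \(i\Ncal/\partial_\alpha\Dcal\) then follows from the Laurent expansion \(\mathcal R_{KH}=\Ncal(\alpha_{KH})/\partial_\alpha\Dcal(\alpha_{KH})\).

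The main obstacle is this limit. I must show that every other contribution is \(o(e^{|\Im\alpha_{KH}|x})\): the other poles in \(\mathcal P_{\rm down}\), the acoustic cut integral from \(\alpha_+\), and the arc term. The same growth-rate comparison is needed on the affine side, so that \(\ell_{KH}(\Phi_0^{\rm out})\) does not contribute. If it did, the coefficient of \(\Phi_{KH}^{\rm out}\) would differ from \(\ell_{KH}\) of the field.

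I would first handle this by requiring that \(\alpha_{KH}\) be the unique element of \(\mathcal P_{\rm down}\) with minimal \(\Im\alpha\), which is implicit in Assumption~\ref{ass:simple-KH}. The cut contributions from the real branch points are then at most algebraically growing, hence dominated. No pinch is assumed, so the deformation is legitimate. The arc term vanishes by Jordan's lemma, using the decay \(\widehat\eta_+=O(|\alpha|^{-2})\) of the Kutta-normalized state. With these estimates the two expressions for \(A\) coincide, which completes the chain of equalities in the theorem.
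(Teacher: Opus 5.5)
Your chain of identifications matches the paper's proof. You use \Cref{thm:conditional-viscous-Kutta} with Proposition~\ref{prop:KF-noncirc} for the adjoint quotient, and Corollary~\ref{cor:identification-principle} to identify the Kutta-normalized Wiener--Hopf field with \(\Phi_{\rm Kutta}^{\rm out}\). You then use the causal deformation \eqref{eq:contour-deformation}--\eqref{eq:KH-component} for the residue and the Laurent expansion for the simple-pole formula.

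\textbf{The gap.} The step that does not close is the one you flag on the affine side. The condition \(\ell_{KH}(\Phi_0^{\rm out})=0\) cannot come from any growth-rate estimate. By Proposition~\ref{prop:outer-ambiguity}, \(\Phi_0^{\rm out}\) is only defined modulo \(\Phi_{KH}^{\rm out}\). Replacing it by \(\Phi_0^{\rm out}+c\,\Phi_{KH}^{\rm out}\) shifts both \(-C_-^{(0)}/C_-^{(KH)}\) and the adjoint quotient by \(-c\), since \(\Finc\mapsto\Finc+c\FKH\). But \(i\Res_{\alpha=\alpha_{KH}}\Mcal\) belongs to the unique Kutta-normalized field and does not move. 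What your argument actually yields is
\(A_{\rm Kutta}^{\rm out}+\ell_{KH}(\Phi_0^{\rm out})=i\Res_{\alpha=\alpha_{KH}}\Mcal\).
The stated identity therefore holds only after the particular solution is normalized by \(\ell_{KH}(\Phi_0^{\rm out})=0\). This is a convention to impose, always achievable by subtracting \(\ell_{KH}(\Phi_0^{\rm out})\,\Phi_{KH}^{\rm out}\), not a fact to prove. The paper uses it tacitly when it reads the \(\alpha_{KH}\)-term of \eqref{eq:contour-deformation} as \(A\,\eta_{KH}\) and the remaining terms as the contribution of \(\Phi_0^{\rm out}\). So replace your ``same growth-rate comparison on the affine side'' with an explicit normalization of \(\Phi_0^{\rm out}\).

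\textbf{The dominance hypothesis.} Your requirement that \(\alpha_{KH}\) be the unique element of \(\mathcal P_{\rm down}\) with minimal imaginary part is not implicit in Assumption~\ref{ass:simple-KH}. That assumption only asserts one simple zero of \(\Dcal\) with \(\Im\alpha_{KH}<0\) and a one-dimensional \(\ker\Bcal\). Dominance is an extra hypothesis, forced by your choice to extract the coefficient through the limit \(x\to\infty\) in \(\ell_{KH}\). The paper avoids it by matching the \(\alpha_{KH}\) residue term directly against \(A\,\eta_{KH}(x)=A\,e^{i\alpha_{KH}x}\). That matching needs no asymptotic comparison with the other poles, the acoustic cut, or the arc, though it still needs the normalization above. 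If you keep the limit route, state the dominance condition as an added assumption rather than attributing it to Assumption~\ref{ass:simple-KH}.
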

\begin{proof}
By \Cref{thm:conditional-viscous-Kutta}, bounded lower-deck matching selects
the unique Kutta-normalized outer solution.  By
Corollary~\ref{cor:identification-principle}, the Kutta-normalized Wiener--Hopf solution
is the same outer solution.  The causal inverse transform
\eqref{eq:inverse-WH}--\eqref{eq:KH-component} gives the coefficient of the
normalized \(\Phi_{KH}^{\rm out}\) as
\(i\Res_{\alpha=\alpha_{KH}}\Mcal\).  The adjoint quotient is the same
coefficient by \eqref{eq:A-inner-outer-equality}.  The simple-pole expression
follows from
$\displaystyle         \Dcal(\alpha)=
        \partial_\alpha\Dcal(\alpha_{KH})(\alpha-\alpha_{KH})
        +O((\alpha-\alpha_{KH})^2)$.
\end{proof}
The formula is invariant under the rescaling
\[
        \Phi_{KH}^{\rm out}\mapsto c\Phi_{KH}^{\rm out},\qquad
        A\mapsto c^{-1}A,
\]
provided the same rescaling is used in \(\FKH\), \(C_-^{(KH)}\), and
\(\ell_{KH}\); the statement above fixes \(c\) by \eqref{eq:ellKH-def}.  If
\[
        \Dcal(\alpha_{KH})=\Dcal_\alpha(\alpha_{KH})=\cdots
        =\Dcal_{\alpha}^{(m-1)}(\alpha_{KH})=0,\qquad
        \Dcal_{\alpha}^{(m)}(\alpha_{KH})\neq0,
\]
then
$\displaystyle         \Mcal(\alpha)
        =
        \sum_{\ell=1}^{m}
        \frac{\mathcal R_\ell}{(\alpha-\alpha_{KH})^\ell}
        +\Mcal_{\rm hol}(\alpha)$,
and the downstream contribution is
$\displaystyle         \eta_{KH}(x)
        =
        i\,e^{i\alpha_{KH}x}
        \sum_{\ell=1}^{m}
        \frac{(ix)^{\ell-1}}{(\ell-1)!}\mathcal R_\ell$.
Thus the simple quotient \(i\Ncal/\Dcal_\alpha\) is valid only when
\(\alpha_{KH}\) is simple.
The geometry derivative of the simple-pole coefficient is
\begin{equation*}
\begin{aligned}
        \partial_\gamma\Arec
        &=
        i\,\frac{\Ncal_\gamma+\Ncal_\alpha\partial_\gamma\alpha_{KH}}
             {\Dcal_\alpha}
        -
        i\,\frac{\Ncal(\Dcal_{\alpha\gamma}
        +\Dcal_{\alpha\alpha}\partial_\gamma\alpha_{KH})}
             {\Dcal_\alpha^2},\\
        \partial_\gamma\alpha_{KH}
        &=
        -\frac{\Dcal_\gamma}{\Dcal_\alpha},
\end{aligned}
\end{equation*}
all quantities being evaluated at
\((\alpha,\omega_{\rm phys},\Gcal)=(\alpha_{KH},\omega_{\rm phys},\Gcal)\).
Thus, for the flat plate,
       $\displaystyle  \Arec
        =
        i\Res_{\alpha=\alpha_{KH}}\Mcal(\alpha;\omega_{\rm phys},\Gcal)$,
and, together with \Cref{thm:conditional-viscous-Kutta},
\begin{equation}
\label{eq:full-final-identity}
        \text{Fredholm lower-deck compatibility}
        \Longleftrightarrow
        C_-(A)=0
        \Longleftrightarrow
        \text{wake-pole residue selection}.
\end{equation}
The Mellin analogue for finite-angle wedges with self-similar sheet data,
including the wedge pole-residue formula (\Cref{thm:wedge-pole-residue}), is
given in Appendix~\ref{app:wedge}.
% =====================================================================
\section{Discussion and limitations}
\label{sec:conclusion}
We summarize the conditional theory.  Let
\[
        \Phi_A^{\rm out}=\Phi_0^{\rm out}+A\Phi_{KH}^{\rm out},\qquad
        C_-(A)=C_-^{(0)}+A C_-^{(KH)},\qquad C_-^{(KH)}\neq0,
\]
and assume the simple-pole, edge-indicial, Fredholm (with augmented
solvability), edge-concomitant, and Kutta-normalized transform hypotheses:
\[
\begin{gathered}
        \Dcal(\alpha_{KH};\omega_{\rm phys},\Gcal)=0,\qquad
        \Dcal_\alpha(\alpha_{KH};\omega_{\rm phys},\Gcal)\neq0,\qquad
        \Im\alpha_{KH}<0,\\
        \ind\LTD(\Omega)=0,\qquad
        \ker\LTD(\Omega)^\ast=\spn\{\Psi^\ast(\Omega)\},\qquad
        \kappa(\Omega)=\Bedge(r^{-1/2}\Vm,\Psi^\ast(\Omega))\neq0,\\
        \Mcal(\alpha;\omega_{\rm phys},\Gcal)
        =
        \frac{\Ncal(\alpha;\omega_{\rm phys},\Gcal)}
             {\Dcal(\alpha;\omega_{\rm phys},\Gcal)}
        \quad\hbox{near }\alpha_{KH}.
\end{gathered}
\]
Then the selected amplitude is, with the wake normalization
\eqref{eq:ellKH-def},
\[
        \Arec(\Omega,\Gcal)
        =
        -\frac{
        \langle \Finc(\Omega),\Psi^\ast(\Omega)\rangle_{\Hcal_\sigma}}
        {
        \langle \FKH(\Omega),\Psi^\ast(\Omega)\rangle_{\Hcal_\sigma}}
        =
        -\frac{C_-^{(0)}}{C_-^{(KH)}}
        =
        i\Res_{\alpha=\alpha_{KH}}
        \Mcal(\alpha;\omega_{\rm phys},\Gcal).
\]
For a simple pole,
$\displaystyle         \Arec(\Omega,\Gcal)
        =
        \frac{i\,\Ncal(\alpha_{KH};\omega_{\rm phys},\Gcal)}
             {\Dcal_\alpha(\alpha_{KH};\omega_{\rm phys},\Gcal)}$.
Equivalently,
        $C_-(A)=0$ iff
        $\displaystyle \Pi_{\rm sing}F_{\rm match}(A)=0$
iff
       $\displaystyle  \Finc+A\FKH\in\Ran\LTD(\Omega)$
iff
       $\displaystyle  \langle \Finc+A\FKH,\Psi^\ast\rangle_{\Hcal_\sigma}=0 $.
Thus the unsteady Kutta condition is the vanishing of the singular edge trace,
not an additional inviscid boundary condition:
\[
        \Tssing\nabla\phi_A^{\rm out}
        =
        C_-(A)r^{-1/2}\Vm
        =0
        \quad\Longleftrightarrow\quad
        \Bedge(C_-(A)r^{-1/2}\Vm,\Psi^\ast)=0 .
\]
For the linear-shear model of \Cref{sec:model} the inner hypotheses are
theorems (Propositions~\ref{prop:model-fredholm} and~\ref{prop:model-H4}), the adjoint state is the
Airy-derivative field of \Cref{thm:model-adjoint}, and the identities above
hold unconditionally for all \(\Omega>0\) outside a discrete resonance set
(Corollary~\ref{cor:model-main}).
The distinguished frequency variable is
\[
        \Omega
        =
        Re^{-1/4}\frac{\omega_{\rm phys}L}{U},
        \qquad
        \Omega=O(1)
        \Longleftrightarrow
        \frac{\omega_{\rm phys}L}{U}=O(Re^{1/4}).
\]
Hence the leading receptivity law has the reduced form
       $\displaystyle  \Arec
        =
        \Arec\!\left(
        Re^{-1/4}\frac{\omega_{\rm phys}L}{U},\Gcal
        \right)$.
For a smooth one-parameter geometry \(\Gcal=\Gcal(\gamma)\), the simple-pole
sensitivity is
\begin{equation*}
\begin{aligned}
        \partial_\gamma\Arec
        &=
        i\,\frac{\Ncal_\gamma+\Ncal_\alpha\,\partial_\gamma\alpha_{KH}}
             {\Dcal_\alpha}
        -
        i\,\frac{\Ncal(\Dcal_{\alpha\gamma}
        +\Dcal_{\alpha\alpha}\partial_\gamma\alpha_{KH})}
             {\Dcal_\alpha^2},\\
        \partial_\gamma\alpha_{KH}
        &=
        -\frac{\Dcal_\gamma}{\Dcal_\alpha},
\end{aligned}
\end{equation*}
with all functions evaluated at
\((\alpha,\omega_{\rm phys},\Gcal)=(\alpha_{KH},\omega_{\rm phys},\Gcal)\).
Thus
$\displaystyle         \partial_\gamma\Arec
        =
        \mathcal S_{\rm force}
        +
        \mathcal S_{\rm pole}
        +
        \mathcal S_{\rm dispersion}$,
where
\[
        \mathcal S_{\rm force}=\frac{i\,\Ncal_\gamma}{\Dcal_\alpha},\qquad
        \mathcal S_{\rm pole}
        =
        i\,\partial_\gamma\alpha_{KH}
        \left(
        \frac{\Ncal_\alpha}{\Dcal_\alpha}
        -
        \frac{\Ncal\Dcal_{\alpha\alpha}}{\Dcal_\alpha^2}
        \right),\qquad
        \mathcal S_{\rm dispersion}
        =
        -\frac{i\,\Ncal\Dcal_{\alpha\gamma}}{\Dcal_\alpha^2}.
\]
The Mellin analogue for finite-angle wedges with self-similar sheet data is
given in Appendix~\ref{app:wedge}.
\subsection*{Limitations and extensions}
The full trailing-edge base flow introduces three analytic issues that are not
settled here.  (i)~\emph{Fredholm realization}: one must construct weighted
spaces for the true unsteady lower-deck operator of
\cite{stewartson1969flow,messiter1970boundary,jobe1974numerical}, prove the index-zero
Fredholm property \eqref{eq:Fredholm-index}--\eqref{eq:adjoint-kernel-one},
and establish the augmented solvability of Hypothesis~\ref{hyp:Fredholm-LD}(ii); the
natural route is the limit-operator decomposition used in
Proposition~\ref{prop:model-fredholm}, with the explicit Airy symbols replaced by the
frozen symbols of the numerically known base
flow~\cite{rabinovitch2004limit}.  (ii)~\emph{Edge nondegeneracy}:
one must prove \(\kappa(\Omega)\neq0\) for the corresponding adjoint
state; Proposition~\ref{prop:model-H4} proves it in the model, where the adjoint is
Airy-explicit.  (iii)~\emph{Spectral degenerations}: multiple wake poles,
pole--cut collisions with the acoustic branch points
\eqref{eq:branch-points}, Briggs--Bers pinches (the absolute/convective
transition, which invalidates the causal downstream pickup of
\Cref{subsec:flat-residue}~\cite{briggs1964electron,bers1983space,monkewitz1990local}),
and the pole strings generated by non-self-similar wedge
sheets~\cite{davis2016instability} all require separate treatment; for a
pole of order \(m\) the residue formula is replaced by the algebraically
growing term
\(\eta_{KH}(x)=i e^{i\alpha_{KH}x}\sum_{\ell=1}^{m}
\tfrac{(ix)^{\ell-1}}{(\ell-1)!}\mathcal R_\ell\).

\appendix
\section{Formal adjoint and concomitant}
\label{app:formal-adjoint}
Let
\[
        \Pi=\mathbb R_X\times\mathbb R_+,\qquad
        \Gamma_p=(-\infty,0)\times\{0\},\qquad
        \Gamma_w=(0,\infty)\times\{0\}.
\]
The steady lower-deck state satisfies
\[
        U_{0X}+V_{0Y}=0,\qquad
        V_0|_{\Gamma_w}=0 .
\]
For \(W=(u,v,p,a)^{\mathsf T}\), define
\[
        R_0(W):=u_X+v_Y ,
\]
\[
        R_1(W):=
        -i\Omega u+U_0u_X+V_0u_Y+U_{0X}u+U_{0Y}v+p_X-u_{YY}.
\]
The primal homogeneous boundary and matching constraints are
\[
        u=v=0\quad\text{on }\Gamma_p,\qquad
        v=0,\quad u_Y=0\quad\text{on }\Gamma_w,
\]
\[
        u(X,Y)-a(X)\to0\quad(Y\to\infty),\qquad
        p=\Kcal[a].
\]
Here
        $\Kcal=H\partial_X$ such that 
$\displaystyle         \widehat{\Kcal f}(\alpha)=|\alpha|\widehat f(\alpha)$.
(For the two-sided wake of \Cref{subsec:two-sided} the computation below is
performed on each half \(\pm Y>0\) and the centerline terms of
\eqref{eq:centerline-conditions} are added; the symmetric component reproduces
exactly the formulas of this appendix, and the antisymmetric component differs
only in the wake-side boundary block.)
\subsection*{Bulk adjoint}
Let \(\Psi=(u^\ast,q)^{\mathsf T}\).  Pair
$\displaystyle        \langle \LTD W,\Psi\rangle
        :=
        \iint_{\Pi}\{u^\ast R_1(W)+qR_0(W)\}\,dX\,dY$ .
Modulo boundary fluxes,
\[
\begin{aligned}
        \iint_\Pi u^\ast U_0u_X
        &\equiv
        -\iint_\Pi (U_0u_X^\ast+U_{0X}u^\ast)u,\\
        \iint_\Pi u^\ast V_0u_Y
        &\equiv
        -\iint_\Pi (V_0u_Y^\ast+V_{0Y}u^\ast)u,\\
        \iint_\Pi u^\ast p_X
        &\equiv
        -\int_{\mathbb R}p\,\bar U_X^\ast\,dX,\qquad
        \bar U^\ast(X):=\int_0^\infty u^\ast(X,Y)\,dY,\\
        -\iint_\Pi u^\ast u_{YY}
        &\equiv
        -\iint_\Pi u_{YY}^\ast u,\qquad
        \iint_\Pi q u_X\equiv-\iint_\Pi q_Xu,\\
        \iint_\Pi qv_Y&\equiv-\iint_\Pi q_Yv .
\end{aligned}
\]
Therefore
\[
\begin{aligned}
        \langle \LTD W,\Psi\rangle
        &=
        \iint_\Pi u\,
        \Big(
        -i\Omega u^\ast-U_0u_X^\ast-U_{0X}u^\ast
        -V_0u_Y^\ast-V_{0Y}u^\ast
        +U_{0X}u^\ast-u_{YY}^\ast-q_X
        \Big)\,dX\,dY\\
        &\quad+
        \iint_\Pi v\,(U_{0Y}u^\ast-q_Y)\,dX\,dY
        -
        \int_{\mathbb R}p\,\bar U_X^\ast\,dX
        +
        \int_{\partial\Pi}J\cdot n\,ds .
\end{aligned}
\]
Using \(V_{0Y}=-U_{0X}\), this becomes
\[
        \langle \LTD W,\Psi\rangle
        =
        \iint_\Pi u\,\mathcal L_u^\ast\Psi\,dX\,dY
        +
        \iint_\Pi v\,\mathcal L_v^\ast\Psi\,dX\,dY
        -
        \int_{\mathbb R}p\,\bar U_X^\ast\,dX
        +
        \int_{\partial\Pi}J\cdot n\,ds ,
\]
where
\[
        \mathcal L_u^\ast\Psi
        =
        -i\Omega u^\ast-U_0u_X^\ast-V_0u_Y^\ast
        +U_{0X}u^\ast-u_{YY}^\ast-q_X,
        \qquad
        \mathcal L_v^\ast\Psi
        =
        U_{0Y}u^\ast-q_Y .
\]
Hence the formal adjoint equations are
\[
        -i\Omega u^\ast-U_0u_X^\ast-V_0u_Y^\ast
        +U_{0X}u^\ast-u_{YY}^\ast-q_X=0,
        \qquad
        q_Y=U_{0Y}u^\ast .
\]
\subsection*{Concomitant}
The boundary fluxes are
\[
        J^X=qu+U_0u^\ast u+u^\ast p,\qquad
        J^Y=qv+V_0u^\ast u-(u^\ast u_Y-u_Y^\ast u).
\]
Equivalently,
\[
        J
        =
        (J^X,J^Y),\qquad
        J^X=qu+U_0u^\ast u+u^\ast p,\quad
        J^Y=qv+V_0u^\ast u-u^\ast u_Y+u_Y^\ast u .
\]
The Lagrange identity is
\[
        u^\ast R_1(W)+qR_0(W)
        -
        u\,\mathcal L_u^\ast\Psi
        -
        v\,\mathcal L_v^\ast\Psi
        =
        \partial_XJ^X+\partial_YJ^Y
        -p\,\bar U_X^\ast\delta_{Y=\infty},
\]
where the last term is understood after integration in \(Y\), equivalently as
the line contribution \(-\int_{\mathbb R}p\,\bar U_X^\ast\,dX\).
\subsection*{Adjoint wall and wake conditions}
On \(\Gamma_p\), the primal variations satisfy \(u=v=0\), while \(u_Y\) is
free.  Hence
$\displaystyle         J^Y|_{\Gamma_p}
        =
        -u^\ast u_Y$,
so vanishing of the boundary form for all admissible \(u_Y\) gives
$u^\ast=0$ on $\Gamma_p$.
On \(\Gamma_w\), the primal variations satisfy \(v=0\), \(u_Y=0\), and
\(V_0=0\).  Hence
$\displaystyle         J^Y|_{\Gamma_w}
        =
        u_Y^\ast u$,
so vanishing for arbitrary admissible \(u\) gives
$u_Y^\ast=0$ on $\Gamma_w$.
Thus
\[
        u^\ast=0\ (X<0,Y=0),\qquad
        u_Y^\ast=0\ (X>0,Y=0),
\]
with decay conditions chosen so that the fluxes at \(X=\pm\infty\) and
\(Y=\infty\) vanish in the weighted graph norm (with the weights
\eqref{eq:weights-new}, adjoint states decay downstream faster than
\(e^{-\vartheta X}\)).
\subsection*{Adjoint of the interaction and matching constraints}
Introduce line multipliers \(b,\mu\) for
\[
        p-\Kcal[a]=0,\qquad
        a-u_\infty=0,\qquad
        u_\infty(X):=\lim_{Y\to\infty}u(X,Y).
\]
The augmented line pairing is
\[
        \mathscr I_{\rm line}
        =
        -\int_{\mathbb R}p\,\bar U_X^\ast\,dX
        +
        \int_{\mathbb R}b(p-\Kcal a)\,dX
        +
        \int_{\mathbb R}\mu(a-u_\infty)\,dX .
\]
The coefficient of \(p\) gives
\[
        -\bar U_X^\ast+b=0
        \quad\Longrightarrow\quad
        b=\bar U_X^\ast.
\]
The coefficient of \(a\) gives
$\displaystyle         \mu-\Kcal^\ast b=0 $.
Since
\[
        H^\ast=-H,\qquad
        \partial_X^\ast=-\partial_X,\qquad
        H\partial_X=\partial_XH,
\]
we have
$\displaystyle         \Kcal^\ast=(H\partial_X)^\ast
        =
        \partial_X^\ast H^\ast
        =
        (-\partial_X)(-H)
        =
        H\partial_X
        =
        \Kcal $.
Equivalently, in Fourier variables,
\[
        \widehat{\Kcal f}(\alpha)=|\alpha|\widehat f(\alpha)
        \quad\Longrightarrow\quad
        \Kcal^\ast=\Kcal\ge0 .
\]
Thus
$\displaystyle         \mu=\Kcal b=\Kcal[\bar U_X^\ast]=H[\bar U_{XX}^\ast]$.
The coefficient of \(u_\infty\) is \(-\mu\), the adjoint far-field traction
associated with the displacement matching.
\subsection*{Compact operator form}
With
\[
        \Psi^\ast=(u^\ast,q,b,\mu)^{\mathsf T},
        \qquad
        \bar U^\ast(X)=\int_0^\infty u^\ast(X,Y)\,dY,
\]
the formal adjoint of the linearized lower-deck operator is
$\displaystyle         \LTD(\Omega)^\ast\Psi^\ast=0 $,
meaning
\[
        \begin{cases}
        -i\Omega u^\ast-U_0u_X^\ast-V_0u_Y^\ast
        +U_{0X}u^\ast-u_{YY}^\ast-q_X=0,\\[1mm]
        q_Y=U_{0Y}u^\ast,\\[1mm]
        b=\bar U_X^\ast,\\[1mm]
        \mu=\Kcal[\bar U_X^\ast],
        \end{cases}
\]
with
\[
        u^\ast=0\quad\text{on }\Gamma_p,\qquad
        u_Y^\ast=0\quad\text{on }\Gamma_w.
\]
For admissible \(W\) and \(\Psi^\ast\),
\begin{equation}
\label{eq:app-Green}
        \langle \LTD(\Omega)W,\Psi^\ast\rangle_{\Hcal_\sigma}
        -
        \langle W,\LTD(\Omega)^\ast\Psi^\ast\rangle_{\Hcal_\sigma}
        =
        \int_{\partial\Pi}J(W,\Psi^\ast)\cdot n\,ds
        +
        \int_{\mathbb R}
        \{b(p-\Kcal a)+\mu(a-u_\infty)\}\,dX .
\end{equation}
If \(W\in\mathcal D(\LTD)\) and \(\Psi^\ast\in\mathcal D(\LTD^\ast)\), all
boundary and line terms vanish except possible finite edge contributions.
\subsection*{Finite edge concomitant}
Let \(B_\rho^+=\Pi\cap\{X^2+Y^2<\rho^2\}\).  For a singular edge datum
\[
        G=C\,r^{-1/2}\Vm(\theta)\in\Eedge,\qquad
        \Eedge=\spn\{r^{-1/2}\Vm\},
\]
with lower-deck line traces \((g_\infty^\sharp,g_K^\sharp)\) given by
\eqref{eq:inner-singular-trace}--\eqref{eq:singular-trace-constants}, define
\[
        \Bedge(G,\Psi^\ast)
        :=
        \fp\lim_{\rho\downarrow0}
        \int_{\partial B_\rho^+}
        J(G,\Psi^\ast)\cdot n\,ds
        +
        \fp\int_{\mathbb R}
        \big(g_K^\sharp\,b+g_\infty^\sharp\,\mu\big)\,dX .
\]
The second (trace-pairing) term is the contribution of the
pressure--displacement and matching constraints, i.e., the line pairing of
\eqref{eq:app-Green} evaluated on the singular datum; it is the form used in
the model computation of \Cref{sec:model}.  The finite parts exist because the
bulk profile is locally square integrable in two dimensions, the line traces
pair against the adjoint line states with local exponents summing above
\(-1\), and the residual divergences are removed by the finite part. 
Linearity yields
$\displaystyle         \Bedge(C\,r^{-1/2}\Vm,\Psi^\ast)
        =
        C\,\Bedge(r^{-1/2}\Vm,\Psi^\ast)$.
Thus the edge nondegeneracy condition used in the main text is exactly
$\displaystyle        \kappa(\Omega)
        :=
        \Bedge(r^{-1/2}\Vm,\Psi^\ast(\Omega))
        \neq0 $.
For the outer family,
$\displaystyle         \Bedge(C_-(A)r^{-1/2}\Vm,\Psi^\ast(\Omega))
        =
        C_-(A)\kappa(\Omega)$.
\subsection*{Resulting compatibility formula}
For decomposed matching data
\[
        F_{\rm match}(A)
        =
        C_-(A)\Fsing+\Finc+A\FKH,
        \qquad
        \Fsing\in\Eedge,\quad
        \Finc+A\FKH\in\Hcal_\sigma,
\]
the Green identity \eqref{eq:app-Green}, applied under the augmented
solvability of Hypothesis~\ref{hyp:Fredholm-LD}(ii), gives the generalized solvability
condition
$\displaystyle         C_-(A)\kappa(\Omega)
        +
        \langle \Finc(\Omega)+A\FKH(\Omega),
        \Psi^\ast(\Omega)\rangle_{\Hcal_\sigma}
        =
        0 $.
Read as an identity in \(A\) (Proposition~\ref{prop:KF-noncirc}), this gives
\(\langle\Finc+A\FKH,\Psi^\ast\rangle=-\kappa(\Omega)C_-(A)\), hence
\(\chi=-\kappa\).  If the lower-deck solution is required to belong to the
bounded graph domain \(\Xcal_\sigma\), then the singular component is excluded,
$\displaystyle         \Pi_{\rm sing}F_{\rm match}(A)=C_-(A)\Fsing=0 $,
and therefore
       $\displaystyle  C_-(A)=0$
iff
       $\displaystyle  \Tssing\nabla\phi_A^{\rm out}=0$
iff
     $\displaystyle    \langle \Finc+A\FKH,\Psi^\ast\rangle_{\Hcal_\sigma}=0 $.
Consequently,
$\displaystyle         A
        =
        -\frac{
        \langle \Finc(\Omega),\Psi^\ast(\Omega)\rangle_{\Hcal_\sigma}}
        {
        \langle \FKH(\Omega),\Psi^\ast(\Omega)\rangle_{\Hcal_\sigma}}
        =
        -\frac{C_-^{(0)}}{C_-^{(KH)}} $.

% =====================================================================
\section{Mellin analogue for finite-angle wedges}
\label{app:wedge}
Let the edge be a wedge
\[
        \Pi_\Theta=\{(r,\theta):r>0,\ -\Theta_-<\theta<\Theta_+\},
        \qquad
        \Theta=\Theta_-+\Theta_+ .
\]
For local wedge fields use the Mellin transform
\[
        \mathfrak M[f](s)
        =
        \int_0^\infty r^{s-1}f(r)\,dr,\qquad
        f(r)=\frac1{2\pi i}\int_{\Re s=\sigma}
        r^{-s}\mathfrak M[f](s)\,ds .
\]
A structural caveat is required which has no flat-plate counterpart.  The
Mellin transform diagonalizes dilations, not translations; a wake mode
that is asymptotically a plane wave \(e^{i\alpha x}\) far from the edge is
not Mellin-homogeneous, and for general wedge sheets the instability emerges
from infinite pole strings \(\{s_{KH}-n\}_{n\geq0}\) generated by the
functional-difference structure rather than from a single pole---this is
precisely the careful treatment that the instability-wave amplitude requires
in~\cite{davis2016instability}.  A single-residue statement is therefore
meaningful only on the dilation-invariant class:
\begin{assumption}[Self-similar sheet class]
\label{ass:wedge-selfsimilar}
The base sheet data in \(\Gcal\) are self-similar (sheet strength a pure
power of \(r\)), so that the homogeneous wake modes of the wedge problem are
Mellin-homogeneous, \(\eta_{KH}\propto r^{-s_{KH}}\), the wake functional
\(\ell_{KH}\) is the coefficient of \(r^{-s_{KH}}\), and the Kutta-normalized
response \eqref{eq:wedge-M} is meromorphic near a simple wake pole
\(s_{KH}\). 
\end{assumption}
The principal separated solutions are
\[
        \phi(r,\theta)=r^\lambda\Phi_\lambda(\theta),\qquad
        \Phi_\lambda''+\lambda^2\Phi_\lambda=0,
\]
or, in Mellin notation, \(\lambda=-s\).  The wedge walls and sheet conditions
produce a finite-dimensional functional-difference
system~\cite{lawrie1994exact,davis2016instability}
\begin{equation}
\label{eq:Mellin-difference}
        \mathbb A(s;\Omega,\Gcal)\mathbf U(s)
        +
        \mathbb B(s;\Omega,\Gcal)\mathbf U(s-1)
        =
        \mathbf F(s;\Omega,\Gcal),
\end{equation}
where \(\mathbf U(s)\) collects Mellin transforms of the angular trace
amplitudes and sheet displacement.  Its homogeneous determinant is
$\displaystyle         \Dcal_{\rm wedge}(s;\Omega,\Gcal)
        :=
        \det\mathbb T(s;\Omega,\Gcal)$,
after reduction of the difference system to a period-one transfer matrix
\(\mathbb T\).  The Kutta-normalized wedge response is assumed meromorphic:
\begin{equation}
\label{eq:wedge-M}
        \Mcal_{\rm wedge}(s;\Omega,\Gcal)
        =
        \frac{\Ncal_{\rm wedge}(s;\Omega,\Gcal)}
             {\Dcal_{\rm wedge}(s;\Omega,\Gcal)} .
\end{equation}
The wedge faces carry the rigid Neumann conditions
\(\partial_\theta\phi=0\) at \(\theta=-\Theta_-,\Theta_+\), so the angular
pencil \(\Phi_\lambda''+\lambda^2\Phi_\lambda=0\) with
\(\Phi_\lambda'(-\Theta_-)=\Phi_\lambda'(\Theta_+)=0\) has the Neumann spectrum
\[
        \lambda_n(\Theta)=\frac{n\pi}{\Theta},\qquad n=0,1,2,\ldots,\qquad
        \Theta=\Theta_-+\Theta_+,
\]
with eigenfunctions
\(\Phi_{\lambda_n}(\theta)=\cos\!\big(\tfrac{n\pi}{\Theta}(\theta+\Theta_-)\big)\).
The first nonconstant root is the loading (pressure-jump) mode
\[
        \lambda_-(\Theta)=\frac{\pi}{\Theta},\qquad
        \Psi_-^{(\Theta)}(\theta)
        =
        \cos\!\Big(\frac{\pi}{\Theta}(\theta+\Theta_-)\Big),
\]
which at the cusped/flat-plate value \(\Theta=2\pi\) reduces to
\(\lambda_-=\tfrac12\) and \(\Psi_-^{(\Theta)}=-\sin\tfrac{\theta}{2}\),
recovering Assumption~\ref{ass:edge-indicial}.  The associated velocity singularity is
\(r^{\lambda_-(\Theta)-1}\), genuinely singular precisely when
\(\lambda_-(\Theta)<1\), i.e. \(\Theta>\pi\); the admissible trailing-edge
range is therefore \(\Theta\in(\pi,2\pi]\), the singularity (and with it the
Kutta selection) disappearing as \(\Theta\downarrow\pi\).  The wedge singular
edge-trace space is
$\displaystyle         \Eedge^{(\Theta)}
        =
        \spn\{r^{\lambda_-(\Theta)-1}\Vm^{(\Theta)}\}$,
replacing \(\Eedge=\spn\{r^{-1/2}\Vm\}\) in the lower-deck hypotheses.
The edge singularity \(r^{\lambda_-(\Theta)}\Psi_-^{(\Theta)}(\theta)\)
corresponds to a Mellin pole at
$\displaystyle         s=-\lambda_-(\Theta)=-\frac{\pi}{\Theta}$
under the convention \(\mathfrak M[r^\lambda]\sim(s+\lambda)^{-1}\).  Thus the
wedge Kutta normalization is
\[
        \Res_{s=-\pi/\Theta}\Mcal_{\rm wedge}(s;\Omega,\Gcal)=0,
        \qquad
        C_-(A)=0 ,
\]
which collapses to \(\Res_{s=-1/2}=0\) only in the cusped limit
\(\Theta=2\pi\).
\begin{remark}[Separation of edge and wake poles]
\label{rem:wedge-separation}
For \(\Mcal_{\rm wedge}\) to be well defined near the wake pole \(s_{KH}\), the
edge pole and the wake pole must remain distinct, \(s_{KH}\neq-\pi/\Theta\): the
normalization kills the edge pole and the residue reads off the wake pole.  For
thin trailing edges \(\Theta\to2\pi\) this is automatic; as \(\Theta\to\pi^+\)
the edge pole \(-\pi/\Theta\to-1\) migrates and may in principle collide with a
wake pole.
\end{remark}
Let \(s_{KH}\) be a simple unstable wake pole:
\begin{equation}
\label{eq:wedge-KH-pole}
        \Dcal_{\rm wedge}(s_{KH};\Omega,\Gcal)=0,\qquad
        \partial_s\Dcal_{\rm wedge}(s_{KH};\Omega,\Gcal)\neq0 .
\end{equation}
Then the inverse Mellin deformation gives
\[
        \eta(r)
        =
        \frac1{2\pi i}\int_{\Re s=\sigma}
        r^{-s}\Mcal_{\rm wedge}(s;\Omega,\Gcal)\,ds
        =
        \sum_{s_j\in\mathcal P}
        \Res_{s=s_j}\big(r^{-s}\Mcal_{\rm wedge}(s)\big)
        +\eta_{\rm rem}(r),
\]
the orientation factor being unity here
(\(\tfrac1{2\pi i}\cdot2\pi i=1\), in contrast with the Fourier factor \(i\)
of \eqref{eq:contour-deformation}), and the unstable wedge-mode coefficient is
$\displaystyle         A_{\rm pole}^{\rm wedge}(\Omega,\Gcal)
        =
        \Res_{s=s_{KH}}\Mcal_{\rm wedge}(s;\Omega,\Gcal)$.
For a simple pole,
$\displaystyle         A_{\rm pole}^{\rm wedge}(\Omega,\Gcal)
        =
        \frac{\Ncal_{\rm wedge}(s_{KH};\Omega,\Gcal)}
             {\partial_s\Dcal_{\rm wedge}(s_{KH};\Omega,\Gcal)} $.
\begin{theorem}[Mellin pole-residue formula]
\label{thm:wedge-pole-residue}
Assume Assumption~\ref{ass:wedge-selfsimilar}, the wedge Mellin representation
\eqref{eq:wedge-M}, the simple pole condition \eqref{eq:wedge-KH-pole}, and the
lower-deck hypotheses Hypotheses~\ref{hyp:Fredholm-LD} and~\ref{hyp:edge-concomitant}.  Then
\begin{equation}
\label{eq:wedge-main}
        \Arec^{\rm wedge}(\Omega,\Gcal)
        =
        -
        \frac{
        \langle \Finc^{\rm wedge}(\Omega),\Psi^\ast_{\rm wedge}(\Omega)
        \rangle_{\Hcal_\sigma}}
        {
        \langle \FKH^{\rm wedge}(\Omega),\Psi^\ast_{\rm wedge}(\Omega)
        \rangle_{\Hcal_\sigma}}
        =
        \Res_{s=s_{KH}}
        \Mcal_{\rm wedge}(s;\Omega,\Gcal).
\end{equation}
If \(s_{KH}\) is simple, then
$\displaystyle         \Arec^{\rm wedge}(\Omega,\Gcal)
        =
        \frac{\Ncal_{\rm wedge}(s_{KH};\Omega,\Gcal)}
             {\partial_s\Dcal_{\rm wedge}(s_{KH};\Omega,\Gcal)}$.
\end{theorem}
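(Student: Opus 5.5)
The plan mirrors the flat-plate argument of \Cref{thm:flat-pole-residue}, with the Fourier inversion replaced by the Mellin inversion. The first equality in \eqref{eq:wedge-main} is purely inner, so I treat it first. The lower-deck hypotheses are assumed for the wedge realization, with $\Eedge$ replaced by $\Eedge^{(\Theta)}=\spn\{r^{\lambda_-(\Theta)-1}\Vm^{(\Theta)}\}$. I would therefore re-run \Cref{thm:conditional-viscous-Kutta} and Proposition~\ref{prop:KF-noncirc} verbatim. The only ingredient to check is the wedge analogue of Lemma~\ref{lem:trace-exclusion}: the line trace of the singular datum behaves like $|X|^{\lambda_-(\Theta)-1}$. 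Since $\lambda_-(\Theta)=\pi/\Theta\le 1/2$ for $\Theta\in[2\pi,\ldots]$ but lies in $(1/2,1)$ for $\Theta\in(\pi,2\pi)$, local non-square-integrability fails for $\Theta<2\pi$. In that range I would instead exclude the datum from $H^{1/2}_\sigma$ through its $H^{1/2}$ seminorm, which diverges for any exponent below $0$. That gives $\Eedge^{(\Theta)}\cap\Hcal_\sigma=\{0\}$. Augmented solvability plus $\kappa\neq0$ then yields $\langle\Finc^{\rm wedge}+A\FKH^{\rm wedge},\Psi^\ast_{\rm wedge}\rangle=-\kappa C_-(A)$. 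Bounded matching forces $C_-(A)=0$, hence the adjoint quotient equals $-C_-^{(0)}/C_-^{(KH)}$.

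Second, on the outer side I would invoke Lemma~\ref{lem:kutta-normalized-unique} and Corollary~\ref{cor:identification-principle} unchanged. They use only the one-dimensional kernel and $C_-^{(KH)}\neq0$, not the transform. The wedge Kutta normalization $\Res_{s=-\pi/\Theta}\Mcal_{\rm wedge}=0$ is identified with $C_-(A)=0$ through the Mellin correspondence $\mathfrak M[r^{\lambda}]\sim(s+\lambda)^{-1}$. So the Kutta-normalized Mellin solution is $\Phi^{\rm out}_{\rm Kutta}$, and its wake coefficient is $A_{\rm Kutta}$.

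Third, I read off that coefficient. I shift the inversion line $\Re s=\sigma$ across $s_{KH}$ and collect $\Res_{s=s_{KH}}(r^{-s}\Mcal_{\rm wedge})=r^{-s_{KH}}\Res_{s=s_{KH}}\Mcal_{\rm wedge}$, where the orientation factor is $1$. Assumption~\ref{ass:wedge-selfsimilar} makes $\ell_{KH}$ the coefficient of $r^{-s_{KH}}$, with $\eta_{KH}=r^{-s_{KH}}$ normalized. Hence $A=\Res_{s=s_{KH}}\Mcal_{\rm wedge}$. Remark~\ref{rem:wedge-separation} ($s_{KH}\neq-\pi/\Theta$) ensures that the Kutta normalization does not disturb this pole. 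The simple-pole formula is the leading Laurent coefficient of $\Ncal_{\rm wedge}/\Dcal_{\rm wedge}$, using \eqref{eq:wedge-KH-pole}.

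The main obstacle is the third step. The issue is justifying that the contour shift picks up exactly the single pole $s_{KH}$ with no shifted copies $s_{KH}-n$ from the difference structure \eqref{eq:Mellin-difference}, and identifying which side of $\Re s=\sigma$ is the causal one. I would handle it by appealing to self-similarity: this reduces $\mathbb T$ to period one, so $\Dcal_{\rm wedge}$ has no pole strings. I would then fix $\sigma$ between the edge pole $-\pi/\Theta$ (removed by normalization) and $s_{KH}$, declaring the remainder $\eta_{\rm rem}$ to be of different power order so that $\ell_{KH}(\eta_{\rm rem})=0$.
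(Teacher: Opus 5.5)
Your proposal is correct and follows the same route as the paper's proof. Kutta normalization removes the edge pole at $s=-\pi/\Theta$ and is identified with $C_-(A)=0$; the conditional lower-deck argument (Proposition~\ref{prop:KF-noncirc} and the Fredholm alternative) equates this selection with the adjoint quotient; the inverse Mellin transform reads off the coefficient of $r^{-s_{KH}}$ as the residue; and the simple-pole formula comes from the Laurent expansion. Your additional check that the wedge trace $|X|^{\pi/\Theta-1}$, which is locally square integrable for $\Theta\in(\pi,2\pi)$, is still excluded from $H^{1/2}_\sigma$ by its Gagliardo seminorm is a real refinement that the paper's sketch does not spell out. Your appeal to a period-one reduction to rule out pole strings is unnecessary, because Assumption~\ref{ass:wedge-selfsimilar} already supplies the simple wake pole, and distinct powers $r^{-s_j}$ cannot contribute to the coefficient of $r^{-s_{KH}}$.
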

\begin{proof}
The Mellin transform diagonalizes the radial homogeneity and, under
Assumption~\ref{ass:wedge-selfsimilar}, converts the wedge trace equations into
\eqref{eq:Mellin-difference} with a Mellin-homogeneous wake mode.  Kutta
normalization removes the \(s=-\pi/\Theta\) edge pole, hence fixes the same
one-dimensional kernel as \(C_-(A)=0\).  
The conditional lower-deck theorem
identifies this Kutta-normalized wedge solution with the Fredholm-selected
one.  The inverse Mellin formula then gives the coefficient of the unstable
wedge mode as the residue at \(s=s_{KH}\), which yields \eqref{eq:wedge-main};
the simple-pole formula follows by the Laurent expansion of
\(\Dcal_{\rm wedge}\).
\end{proof}
Together with \Cref{thm:conditional-viscous-Kutta}, the wedge analogue of
\eqref{eq:full-final-identity} holds verbatim, with
\(i\Res_{\alpha=\alpha_{KH}}\Mcal\) replaced by
\(\Res_{s=s_{KH}}\Mcal_{\rm wedge}\) and the edge space \(\Eedge\) by
\(\Eedge^{(\Theta)}\).

\printcredits

\bibliographystyle{cas-model2-names}
\bibliography{reference}

\end{document}